\titleformat{\subsection}{\it}{\thesubsection.\enspace}{1pt}{}
\newtheorem{theo}{Theorem}[section]
\newtheorem{lemm}[theo]{Lemma}
\newtheorem{defi}[theo]{Definition}
\newtheorem{coro}[theo]{Corollary}
\newtheorem{prop}[theo]{Proposition}
\numberwithin{equation}{section}
\begin{document}
\title{The local well-posedness, blow-up and global solution of a new integrable system in Besov spaces
	\hspace{-4mm}
}

\author{
	Pei $\mbox{Zheng}^1$ \footnote{Email: zhengp25@mail2.sysu.edu.cn},\quad
	Zhaoyang $\mbox{Yin}^{1,2}$\footnote{E-mail: mcsyzy@mail.sysu.edu.cn}
	\\
$^1\mbox{Department}$ of Mathematics,
Sun Yat-sen University, Guangzhou 510275, China\\
$^2\mbox{School}$ of Science,\\ Shenzhen Campus of Sun Yat-sen University, Shenzhen 518107, China}

\date{}
\maketitle
\hrule

\begin{abstract}
	In this paper, we first establish the local well-posednesss for the Cauchy problem of a $N$-peakon system in the sense of Hadamard in both critical Besov spaces and supercritical Besov spaces. Second, we gain a blow-up criterion. According to blow-up criterion wemreach a precise blow-up criterion. Under a sign condition, we reach the existence of global solution. Finally, based on the first-order difference method, we give a simulation example of the blow-up of the equation and the properties of the global solution.
%
%
\end{abstract}

\vspace*{10pt}

\tableofcontents

	\section{Introduction}
	Korteweg-de Vries (KdV) equation which was introduced to describe the behavior of long waves on shallow water in the most famous model in soliton theory, because it is integrable and includes the phenomena of soliton interaction. But the KdV equation cannot model the occurrence of breaking waves. In 1993, Camassa and Holm obtain a nonlinear partial differential equation \cite{CH}
	
	\begin{equation}
		m_{t}+um_{x}+2u_{x}m=0, m=u-u_{xx}+\kappa\nonumber
	\end{equation}
	which is named Camassa-Holm (CH) equation.
	
	A particular feature of the CH equation is that when $\kappa=0$ it admits peaked soliton solutions which are also called peakons. It can be regarded as a shallow water wave equation.\cite{CH,CH2,CH3}. Its complete integrability was discussed in \cite{CH,CH4,CH5}. The CH equation also has a Hamilton structure\cite{CH6,CH7}, and admits exact orbitally stable peaked solitons of the form $ce^{|x-ct|}$ with $c>0$. The local well-posedness of the Cauchy problem of CH equation in Besov spaces and Sobolev spaces was proved in \cite{CH8,CH9,CH10,CH11}. 
	
	Since constructing peakon equation are interested in the fields of Physics and Mathematics. Based on an asymptotic integrability approach, Degasperis and Procesi discovered a new equation which has peakon solutions\cite{DP1,DP2}. Similarly, the Cauchy problem of the DP equation is locally well-posed in certain Sobolev and Besov spaces\cite{DP3,DP4,DP5}. And Geng and Xue constructed a two component peakon with cubic nonlinearity\cite{cubic}, three components generalization of CH equation\cite{three} and super CH equation with $N$-peakon\cite{superN}.
	
	Geng and Xue constructed from the compatibility of a $2\times 2$ matrix spectral problem, a new nonlinear evolution with $N$-peakon is derived\cite{Npeakon}
	\begin{equation}\label{Npeakon}
		\left\{
		\begin{array}{l}
			n_{t}=4[n(v_{x}+2\beta_{0}v)]_{x}\\
			n=4\beta_{0}^{2}v-v_{xx}\\
			v|_{t=0}=v_{0}
		\end{array}
			\right.
	\end{equation}
	It is shown in that system \eqref{Npeakon} admits exact solutions with $N$-peakon, which takes the form 
	$$v(x,t)=\sum_{i=1}^Nn_i(t)e^{-2\beta_0|x-x_i(t)|}$$
	where $n_i$ and $x_i$ evolve according to a dynamical system.
	
	After some transformations, (1.1) can be changed into a transport-like equation
	\begin{equation}\label{Npeakon2}
		\left\{
		\begin{array}{l}
			v_{t}-(8\beta_{0}v+2v_{x})v_{x}=-8\beta_{0}^{2}v^{2}+8\beta_{0}P_{1}(D)(2\beta_{0}^{2}v^{2}+v_{x}^{2})+8\beta^{2}_{0}P_{2}(D)(4\beta_{0}^{2}v^{2}-v_{x}^{2})\\
			v|_{t=0}=v_{0}
		\end{array}
		\right.
	\end{equation}
	with $P_{2}(D)=(4\beta_{0}^{2}-\partial_{x}^{2})^{-1}$, $P_{1}(D)=\partial_{x}P_2(D)$.
	
	In this paper, firstly, applying Littlewood-Paley theory and transport theory, for the initial data in certain Besov spaces of high regularity or of critical regularity, complete the locally well-posedness in Besov space $B^{s}_{p,r}$ with $\{s\geq\frac 1 2, p>2, 1\leq r\leq\infty\}$ or $\{s>\frac 1 p, 1\leq p\leq2, 1\leq r\leq\infty\}$ or $\{s=\frac 1 p, 1\leq p\leq2, r=1\}$, we prove the local solution to equation \eqref{Npeakon} exists uniquely and depends continuously on the initial data. And we will talk about the blow-up criterion and the blow up condition of initial data. In the last part of the paper,according to the structure of equation, we can find a sign-preserved property, and by the virtue of the high-preserved we see that the $H^{1}$-norm of v is non-increasing, then we can finally give the condition of existence of the global solution.

	\section{Preliminaries}
	
	In this section, we will present some propositions about the Littlewood-Paley decomposition and the non homogeneous Besov spaces with their properties.
	\begin{prop}[Littlewood-Paley decomposition]\cite{BCD,He}
		There exists a couple of smooth function $(\chi,\varphi)$ valued in $[0,1]$, such that $\kappa$ is supported in the ball $B\triangleq\left\{\xi\in\mathbb{R}^d: \frac 3 4\leq|\xi|\leq\frac 8 3\right\}$. Moreover\\
		$$
			\forall\xi\in\mathbb{R}^d, \chi(\xi)+\sum_{j\geq0}\varphi(2^{-j}\xi)=1$$
		and
		$$
			supp\ \varphi(2^{-j}\cdot)\cap supp\ \varphi(2^{-j^{\prime}\cdot})=\emptyset,\ if\  |j-j^{\prime}|\geq2$$
		$$
			supp\ \chi(\cdot)\cap supp\ \varphi(2^{-j^{\prime}\cdot})=\emptyset,\ if\ j\geq1$$
	\end{prop}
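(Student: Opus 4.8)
The plan is to construct everything from a single radial cut-off and then telescope. First I would fix a smooth radial function $\chi:\mathbb{R}^d\to[0,1]$, non-increasing along rays, equal to $1$ on $\{|\xi|\le \frac34\}$ and supported in $\{|\xi|\le\frac43\}$; such a bump function exists by the usual mollification argument. This $\chi$ serves as the low-frequency cut-off, so the property $\operatorname{supp}\chi\subset B(0,\frac43)$ is built in from the start.

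Next I would set $\varphi(\xi):=\chi(\xi/2)-\chi(\xi)$ and verify the three requirements. For the range: since $|\xi/2|\le|\xi|$ and $\chi$ is radially non-increasing we get $\chi(\xi/2)\ge\chi(\xi)$, hence $0\le\varphi(\xi)\le\chi(\xi/2)\le1$. For the support: if $|\xi|\le\frac34$ then also $|\xi/2|\le\frac38\le\frac34$, so $\chi(\xi/2)=\chi(\xi)=1$ and $\varphi(\xi)=0$; if $|\xi|>\frac83$ then $|\xi/2|>\frac43$ and both terms vanish; therefore $\operatorname{supp}\varphi\subset\{\frac34\le|\xi|\le\frac83\}=B$. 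For the partition of unity, the sum telescopes:
$$\chi(\xi)+\sum_{0\le j\le N}\varphi(2^{-j}\xi)=\chi(\xi)+\sum_{0\le j\le N}\bigl(\chi(2^{-j-1}\xi)-\chi(2^{-j}\xi)\bigr)=\chi(2^{-N-1}\xi)\xrightarrow[N\to\infty]{}\chi(0)=1,$$
and since for each fixed $\xi$ only finitely many of the $\varphi(2^{-j}\xi)$ are nonzero, this yields $\chi(\xi)+\sum_{j\ge0}\varphi(2^{-j}\xi)=1$ pointwise.

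It remains to read off the two disjointness statements by rescaling $\operatorname{supp}\varphi$: one has $\operatorname{supp}\varphi(2^{-j}\cdot)\subset\{2^j\tfrac34\le|\xi|\le2^j\tfrac83\}$. If $|j-j'|\ge2$, say $j'\ge j+2$, then the innermost radius of $\operatorname{supp}\varphi(2^{-j'}\cdot)$ is at least $2^{j+2}\cdot\tfrac34=3\cdot2^{j}$, which is strictly larger than the outermost radius $2^j\cdot\tfrac83$ of $\operatorname{supp}\varphi(2^{-j}\cdot)$; hence the two supports are disjoint. Similarly, for $j\ge1$ the innermost radius $2^j\cdot\tfrac34\ge\tfrac32$ exceeds $\tfrac43$, so $\operatorname{supp}\varphi(2^{-j}\cdot)$ is disjoint from $\operatorname{supp}\chi\subset B(0,\tfrac43)$.

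I do not expect a genuine obstacle here: the entire statement is engineered around the fact that the constants $\frac34,\frac43,\frac83$ are chosen so that $\varphi=\chi(\cdot/2)-\chi(\cdot)$ lands exactly in the stated annulus and so that consecutive dyadic blocks overlap while blocks two apart do not. The only mild point of care is arranging $\chi$ to be radially monotone, which is what guarantees $\varphi\ge0$ and keeps all values in $[0,1]$; without that assumption one could instead divide through at the end by $\chi(\xi)+\sum_{j\ge0}\varphi(2^{-j}\xi)$, but with the telescoping construction no such renormalization is needed.
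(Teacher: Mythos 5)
Your construction is correct and is precisely the standard one: the paper states this proposition without proof, citing \cite{BCD,He}, and the argument there is the same telescoping construction $\varphi(\xi)=\chi(\xi/2)-\chi(\xi)$ from a radially non-increasing bump $\chi$ that is $1$ on $\{|\xi|\le\frac34\}$ and supported in $\{|\xi|\le\frac43\}$, with the disjointness of dyadic supports read off from the radii exactly as you do. (Note the paper's wording has typos --- ``$\kappa$'' should be $\varphi$ and the ``ball'' is the annulus $\frac34\le|\xi|\le\frac83$ --- and your proof addresses the intended statement.)
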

		
		~~Then for all $u\in\mathcal{S}^\prime$, we can define the nonhomogeneous dyadic blocks as follows. Let
		$$
			\Delta_j u\triangleq0,\ if\ j\leq-2,\ \Delta_{-1}u\triangleq\chi(D)u=\mathcal{F}^{-1}(\chi\mathcal{F}u)$$
		$$
			\Delta_{j}\triangleq\varphi(2^{-j}u)=\mathcal{F}^{-1}(\varphi(2^{-j}\cdot)\mathcal{F}u),\ if\ j\geq 0$$
		Hence,
		$$
			u=\sum_{j\in\mathbb{Z}}\Delta_{j}u\ in\ \mathcal{S}^\prime(\mathbb{R}^d)$$
		where the right-hand side is called the nonhomogeneous Littlewood-Paley decomposition of $u$.
		
		~~According to Young's inequality, we get
		$$
			\Vert\Delta_j u\Vert_{L^p},\Vert S_j u\Vert_{L^p}\leq C\Vert u\Vert_{L^p},\ \forall 1\leq p\leq\infty$$
		The low frequency cut-off operator $\mathcal{S}_j$ is defined by
		$$
			S_j u\triangleq \sum_{j^\prime=-1}^{j-1}\Delta_{j^\prime}u=\chi(2^{-j}D)u=\mathcal{F}^{-1}(\chi(2^{-j}\xi)\mathcal{F}u),\ \forall j\in\mathbb{N}$$
		and the Littlewood-Paley decomposition is quasi-orthogonal in $L^2$ in the following sense:
		$$
			\Delta_{j}\Delta_{k}u\equiv0,\ if\ |j-k|\geq 2
		$$
		$$
			\Delta_{j}(S_{k-1}u\Delta_{k}u)\equiv0,\ if\ |j-k|\geq 5$$
		
		\begin{defi}[Nonhomogeneous Besov space]\cite{BCD}
			Let $s\in\mathbb{R}$, $1\leq p,\ r\leq\infty$. The nonhomogeneous Besov space $B^s_{p,r}(\mathbb{R}^d)${\rm(}$B^s_{p,r}$ for short{\rm)} is defined by
			$$
				B^s_{p,r}(\mathbb{R}^d)\triangleq\left\{f\in\mathcal{S}^\prime:\Vert f\Vert_{B^s_{p,r}}=\triangleq\Vert(2^{js}\Vert\Delta_j f\Vert_{L^p})_{j\geq -1}\Vert_{l^r}<\infty\right\}$$
			If $s=\infty$, we have
			 $$B^\infty_{p,r}\triangleq\cap_{s\in\mathbb{R}}B^s_{p,r}=\left\{f\in\mathcal{S}^\prime:\sup_{j\geq -1}2^{js}\Vert\Delta_j f\Vert_{L^p}<\infty\right\}$$
		\end{defi}
		
		In the following lemma, we list some important properties of Besov spaces.
		
		\begin{lemm}\label{Besov}\cite{BCD,He}
			Let $s\in\mathbb{R},\ 1\leq p,p_1,p_2,r,r_1,r_2\leq\infty$. We have  \\
			{\rm(1)} $B^s_{p,r}$ is a Banach space, and is continuously embedded in $\mathcal{S}'$. \\
			{\rm(2)} If $r<\infty$, then $\lim\limits_{j\rightarrow\infty}\|S_j u-u\|_{B^s_{p,r}}=0$. If $p,r<\infty$, $C_0^{\infty}$ is dense in $B^s_{p,r}$. \\
			{\rm(3)} If $p_1\leq p_2$ and $r_1\leq r_2$, then $ B^s_{p_1,r_1}\hookrightarrow B^{s-d(\frac 1 {p_1}-\frac 1 {p_2})}_{p_2,r_2}$. 
			If $s_1<s_2$, the embedding $B^{s_2}_{p,r_2}\hookrightarrow B^{s_1}_{p,r_1}$ is locally compact. \\
			{\rm(4)} $\forall s>0$, $B^s_{p,r}\cap L^\infty$ is an algebra. Moreover, $B^s_{p,r}\hookrightarrow L^{\infty} \Leftrightarrow s>\frac d p\ \text{or}\ s=\frac d p,\ r=1$ $\Leftrightarrow B^s_{p,r}$ is an algebra. \\
			{\rm(5)} Complex interpolation: $\forall f\in B^{s_1}_{p,r}\cap B^{s_2}_{p,r},\ \forall\theta\in[0,1]$
			\begin{equation}
				\Vert f\Vert_{B^{\theta s_1+(1-\theta) s_2}_{p,r}}\leq\Vert f\Vert^\theta_{B^{s_1}_{p,r}}\Vert f\Vert^{1-\theta}_{B^{s_2}_{p,r}}
			\end{equation}\\
			{\rm(6)} Logarithm interppolation: $\forall s\in\mathbb{R},\ \varepsilon>0$, and $1\leq p\leq\infty$, there exists a constant $C$ such that 
			\begin{equation}
				\Vert u\Vert_{B^s_{p,1}}\leq C\frac{1+\varepsilon} {\varepsilon}\Vert u\Vert_{B^s_{p,\infty}}\left(1+\log\frac{\Vert u\Vert_{B^{s+\varepsilon}_{p,\infty}}}{\Vert u\Vert_{B^{s}_{p,\infty}}}\right)
			\end{equation}\\
			{\rm(7)} Fatou property: if $(u_n)_{n\in\mathbb{N}}$ is a bounded sequence in $B^s_{p,r}$, then an element $u\in B^s_{p,r}$ and a subsequence $(u_{n_k})_{k\in\mathbb{N}}$ exists such that
			$$ \lim_{k\rightarrow\infty}u_{n_k}=u\ \text{in}\ \mathcal{S}'\quad \text{and}\quad \|u\|_{B^s_{p,r}}\leq C\liminf_{k\rightarrow\infty}\|u_{n_k}\|_{B^s_{p,r}}. $$
			{\rm(8)} Let $m\in\mathbb{R}$ and $f$ be a $S^m$- multiplier, (i.e. $f$ is a smooth function and satisfies that $\forall\alpha\in\mathbb{N}^d$, $\exists C=C(\alpha)$, such that $|\partial^{\alpha}f(\xi)|\leq C(1+|\xi|)^{m-|\alpha|},\ \forall\xi\in\mathbb{R}^d)$.
			Then the operator $f(D)=\mathcal{F}^{-1}(f\mathcal{F}\cdot)$ is continuous from $B^s_{p,r}$ to $B^{s-m}_{p,r}$.
		\end{lemm}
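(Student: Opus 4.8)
All eight items are classical facts of the nonhomogeneous Littlewood--Paley calculus; I would establish them exactly as in \cite{BCD} (see also \cite{He}), so the ``proof'' is really a matter of assembling standard estimates rather than of any new idea. The tool used throughout is \emph{Bernstein's inequality}: if $\widehat u$ is supported in a ball of radius $\lesssim 2^j$ then $\|\partial^\alpha u\|_{L^{p_2}}\lesssim 2^{j(|\alpha|+d(\frac1{p_1}-\frac1{p_2}))}\|u\|_{L^{p_1}}$ for $1\le p_1\le p_2\le\infty$, while if $\widehat u$ is supported in an annulus of radii $\sim 2^j$ then $2^{jN}\|u\|_{L^p}\lesssim \sup_{|\alpha|=N}\|\partial^\alpha u\|_{L^p}$. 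Applying these to $u=\Delta_j f$ and then taking the $\ell^r$ norm of the sequence $(2^{js}\,\cdot\,)_{j\ge-1}$ reduces most of the statements to elementary summations.

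Items (1)--(3) and (5) follow in this way. For (1): pairing $\sum_j\Delta_j f$ with a Schwartz test function and controlling the low block via Bernstein and the high blocks via the $\ell^r$-summability gives the embedding into $\mathcal S'$; completeness is then inherited from that of $\ell^r(\mathbb Z_{\ge-1};L^p)$, once one checks that a Cauchy sequence has a distributional limit whose dyadic blocks are the $L^p$-limits of the blocks. Item (2) holds because $\|S_j f-f\|_{B^s_{p,r}}$ is just the $\ell^r$-tail $\|(2^{ks}\|\Delta_k f\|_{L^p})_{k\ge j}\|_{\ell^r}$, which vanishes when $r<\infty$, and the density of $C_0^\infty$ when $p,r<\infty$ follows by composing this truncation with a spatial cut-off and a mollification. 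Item (3) is Bernstein together with $\ell^{r_1}\hookrightarrow\ell^{r_2}$, the local compactness when $s_1<s_2$ coming from a Rellich argument on a fixed frequency--space window plus the gain $2^{j(s_1-s_2)}$ that controls the tails. Item (5) is merely H\"older in the index $j$ with exponents $\frac1\theta$ and $\frac1{1-\theta}$.

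The remaining items carry the real content. Item (4): the algebra property for $s>0$ and the equivalence $B^s_{p,r}\hookrightarrow L^\infty\Leftrightarrow s>\frac dp$ or $(s=\frac dp,\ r=1)$ rest on Bony's decomposition $fg=T_fg+T_gf+R(f,g)$, each piece of which is bounded by Bernstein and discrete convolution inequalities; the embedding part alone is $\|\Delta_j f\|_{L^\infty}\lesssim 2^{jd/p}\|\Delta_j f\|_{L^p}$ summed over $j$. Item (6) is the most computational: split $\|u\|_{B^s_{p,1}}=\sum_{j\ge-1}2^{js}\|\Delta_j u\|_{L^p}$ at a level $N$, estimate $\sum_{j<N}$ by $(N+2)\|u\|_{B^s_{p,\infty}}$ and $\sum_{j\ge N}2^{-j\varepsilon}\,2^{j(s+\varepsilon)}\|\Delta_j u\|_{L^p}$ by $\tfrac C\varepsilon 2^{-N\varepsilon}\|u\|_{B^{s+\varepsilon}_{p,\infty}}$, then choose $N\sim\tfrac1\varepsilon\log\big(\|u\|_{B^{s+\varepsilon}_{p,\infty}}/\|u\|_{B^s_{p,\infty}}\big)$. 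Item (7): for each fixed $j$ the sequence $(\Delta_j u_n)_n$ is bounded in $L^p$ with Fourier support in a fixed annulus, hence bounded in $C^k$ on every ball by Bernstein; Arzel\`a--Ascoli and a diagonal extraction give a subsequence with $u_{n_k}\to u$ in $\mathcal S'$ and $\Delta_j u_{n_k}\to\Delta_j u$ locally uniformly, and Fatou's lemma applied to the $\ell^r$-sum yields the stated bound. Item (8): rescaling the symbol, $\xi\mapsto f(2^j\xi)\widetilde\varphi(\xi)$ has all derivatives bounded on $\mathrm{supp}\,\widetilde\varphi$ up to the factor $2^{jm}$, so its inverse Fourier transform is an $L^1$ kernel of mass $\lesssim 2^{jm}$ and Young's inequality gives $\|\Delta_j f(D)u\|_{L^p}\lesssim 2^{jm}\|\widetilde\Delta_j u\|_{L^p}$; multiplying by $2^{j(s-m)}$ and taking $\ell^r$ finishes.

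Since everything here is standard, there is no genuine obstacle, but if one step deserves care it is the paraproduct bookkeeping in (4) --- in particular pinning down the endpoint $s=\frac dp$, $r=1$ --- together with the diagonal/compactness argument in (7). For complete details I would refer to \cite{BCD} and \cite{He}.
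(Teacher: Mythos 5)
Your sketch is correct and follows exactly the standard Littlewood--Paley arguments (Bernstein, Bony decomposition, H\"older in $j$, splitting at a frequency level for the logarithmic interpolation, diagonal extraction for the Fatou property, rescaled-kernel plus Young for the $S^m$-multiplier) from \cite{BCD,He}, which is precisely what the paper does, since it states this lemma without proof and defers to those references. No discrepancy to report.
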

		
		\begin{prop}\label{multiplier}
			$\forall \beta_0\neq0$, we have $(4\beta_0^2-\partial_x^2)^{-1}$ is a $S^{-2}$-multiplier, and $\partial_{x}(4\beta_0^2-\partial_x^2)^{-1}$ is a $S^{-1}$-multiplier.
		\end{prop}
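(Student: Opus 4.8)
The plan is to verify the two claimed symbol estimates directly from the definition of an $S^m$-multiplier, since both $(4\beta_0^2-\partial_x^2)^{-1}$ and $\partial_x(4\beta_0^2-\partial_x^2)^{-1}$ are Fourier multipliers with completely explicit symbols in one space dimension. Writing $\xi$ for the Fourier variable, the first operator has symbol $f_1(\xi)=(4\beta_0^2+\xi^2)^{-1}$ and the second has symbol $f_2(\xi)=i\xi(4\beta_0^2+\xi^2)^{-1}$. Both are smooth on all of $\mathbb{R}$ precisely because $\beta_0\neq 0$ keeps the denominator bounded away from zero; this nonvanishing hypothesis is exactly what must be used, and it is the only subtlety worth flagging.

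First I would record the base case $\alpha=0$: since $4\beta_0^2+\xi^2\geq 4\beta_0^2$ and also $4\beta_0^2+\xi^2\geq \frac12(1+\xi^2)\cdot c$ for a constant $c=c(\beta_0)$ (indeed $4\beta_0^2+\xi^2 \gtrsim 1+\xi^2$ with constant depending on $\beta_0$), we get $|f_1(\xi)|\lesssim (1+|\xi|)^{-2}$ and $|f_2(\xi)|\leq |\xi|(4\beta_0^2+\xi^2)^{-1}\lesssim (1+|\xi|)^{-1}$, matching $m=-2$ and $m=-1$ respectively. Then I would handle general derivatives. The cleanest route is an induction on $|\alpha|$ (here $d=1$, so $\alpha$ is just a nonnegative integer $k$): differentiating $f_1$ once gains a factor that decays one extra power, i.e. one shows by Leibniz/quotient rule that $\partial_\xi^k f_1(\xi)$ is a finite sum of terms of the form $c_{k,\ell}\,\xi^{\ell}(4\beta_0^2+\xi^2)^{-1-k+\text{(something)}}$ and each such term is $O((1+|\xi|)^{-2-k})$; the same bookkeeping gives $\partial_\xi^k f_2(\xi)=O((1+|\xi|)^{-1-k})$. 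Alternatively, and perhaps more slickly, one notes that $f_1$ is a rational function whose numerator has degree $0$ and denominator degree $2$, and it is a standard fact that each $\xi$-derivative of a rational function lowers the degree (numerator-degree minus denominator-degree) by exactly one, which immediately yields the decay $(1+|\xi|)^{m-k}$ with $m$ the degree; $f_2$ has degree $-1$, so the same applies.

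I do not expect a genuine obstacle here — this proposition is a routine verification whose only content is the observation that $\beta_0\neq 0$ makes the symbols smooth and that rational symbols automatically satisfy the $S^m$ derivative bounds with $m$ equal to the degree. If anything, the mild care needed is in making the constants uniform over all $\xi\in\mathbb{R}$ (not just large $\xi$), which is handled by the two-sided comparison $4\beta_0^2+\xi^2\sim_{\beta_0} 1+|\xi|^2$. Once Proposition \ref{multiplier} is in hand, Lemma \ref{Besov}(8) applies to give that $P_2(D)=(4\beta_0^2-\partial_x^2)^{-1}$ maps $B^s_{p,r}\to B^{s+2}_{p,r}$ and $P_1(D)=\partial_x P_2(D)$ maps $B^s_{p,r}\to B^{s+1}_{p,r}$, which is the form in which this result will be used when estimating the nonlocal terms in \eqref{Npeakon2}.
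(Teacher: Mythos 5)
Your verification is correct: the symbols $f_1(\xi)=(4\beta_0^2+\xi^2)^{-1}$ and $f_2(\xi)=i\xi(4\beta_0^2+\xi^2)^{-1}$ are smooth precisely because $\beta_0\neq0$, and the Leibniz/induction bookkeeping together with $4\beta_0^2+\xi^2\gtrsim_{\beta_0}1+\xi^2$ gives $|\partial_\xi^k f_1(\xi)|\lesssim_{\beta_0,k}(1+|\xi|)^{-2-k}$ and $|\partial_\xi^k f_2(\xi)|\lesssim_{\beta_0,k}(1+|\xi|)^{-1-k}$, which is exactly the $S^{-2}$ and $S^{-1}$ conditions. The paper states Proposition \ref{multiplier} without any proof, treating it as a standard fact, so your write-up simply supplies the routine verification being taken for granted; the only minor imprecision is the claim that differentiating a rational function lowers its degree by \emph{exactly} one (it lowers it by at least one, and the derivative may even vanish), but since only the upper bound is needed this does not affect the argument, and your concluding remark that Lemma \ref{Besov}(8) then yields the mapping properties $B^s_{p,r}\to B^{s+2}_{p,r}$ for $P_2(D)$ and $B^s_{p,r}\to B^{s+1}_{p,r}$ for $P_1(D)$ is indeed how the proposition is used in the paper.
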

		
	Next, we introduce the paradifferential calculus of Besov space and the main continuity properties of the paraproduct and the remainder.
	
	\begin{defi}\cite{BCD}
		The nonhomogeneous paraproduct of v by u is defined by
		$$T_u v= \sum_j S_{j-1}u\Delta_j v$$
		The nonhomogeneous remainder of u and v is defined by
		$$R(u,v)=\sum_{|k-j|\leq1}\Delta_k u\Delta_j v$$
	\end{defi}
	
	\begin{lemm}\label{Bony}\cite{BCD}
		{\rm (1)} $\forall t<0,s\in\mathbb{R},u\in B^t_{p,r_1}\cap L^\infty,v\in B^s_{p,r_2}$ with $\frac 1 r=\frac 1 {r_1}+\frac 1 {r_2}$, then
		$$\|T_u v\|_{B^s_{p,r_2}}\leq C\|u\|_{L^\infty}\|v\|_{B^s_{p,r_2}}$$
		or
		$$\|T_u v\|_{B^{s+t}_{p,r}}\leq C\|u\|_{B^t_{\infty,r_1}}\|v\|_{B^s_{p,r_2}}$$
		{\rm (2)} $\forall s_1,s_2\in\mathbb{R},1\leq p_1,p_2,r_1,r_2\leq\infty$, with $\frac 1 p=\frac 1 {p_1}+\frac 1 {p_2}\leq1,\frac 1 r=\frac 1 {r_1}+\frac 1 {r_2}\leq1$. Then $\forall (u,v)\in B^{s_1}_{p_1,r_1}\times B^{s_2}_{p_2,r_2}$, if $s_1+s_2>0$
		$$\|R(u,v)\|_{B^{s_1+s_2}_{p,r}}\leq C\|u\|_{B^{s_1}_{p_1,r_1}}\|v\|_{B^{s_2}_{p_2,r_2}}$$
		If $r=1$ and $s_1+s_2=0$,
		$$\|R(u,v)\|_{B^0_{p,\infty}}\leq C\|u\|_{B^{s_1}_{p_1,r_1}}\|v\|_{B^{s_2}_{p_2,r_2}}$$
	\end{lemm}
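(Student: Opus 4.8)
The plan is to establish both parts by the standard Littlewood--Paley/Bony machinery, exploiting the fact that every summand in the definitions of $T_u v$ and $R(u,v)$ is spectrally localized, and then converting the resulting dyadic estimates into Besov norms via H\"older's inequality in the frequency index together with Young's convolution inequality for sequences. Throughout I would use only the boundedness $\|\Delta_j w\|_{L^p}\le C\|w\|_{L^p}$ and the quasi-orthogonality relations recorded in the preliminaries.

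For the paraproduct part (1), I would begin by observing that each block $S_{j-1}u\,\Delta_j v$ has Fourier transform supported in an annulus $2^{j}\mathcal{C}$ for a fixed annulus $\mathcal{C}$, since $\widehat{S_{j-1}u}$ lives in a ball of radius $\sim 2^{j}$ while $\widehat{\Delta_j v}$ lives in an annulus of radius $\sim 2^{j}$. Hence $\Delta_k(T_u v)=\sum_{|k-j|\le N_0}\Delta_k(S_{j-1}u\,\Delta_j v)$ for a universal $N_0$. For the first estimate I bound $\|\Delta_k(T_u v)\|_{L^p}\le C\|u\|_{L^\infty}\sum_{|k-j|\le N_0}\|\Delta_j v\|_{L^p}$, multiply by $2^{ks}$, and take the $\ell^{r_2}$ norm; the inner sum ranges over a bounded set of shifts, so the bound follows at once. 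For the second estimate I must instead control $\|S_{j-1}u\|_{L^\infty}\le\sum_{j'\le j-2}\|\Delta_{j'}u\|_{L^\infty}$: writing $2^{-j't}=2^{-jt}2^{(j-j')t}$ and using $t<0$ so that $2^{(j-j')t}$ is summable over $j'\le j-2$, Young's inequality yields $\|S_{j-1}u\|_{L^\infty}\le C\,2^{-jt}e_j$ with $(e_j)\in\ell^{r_1}$ of norm $\lm\|u\|_{B^t_{\infty,r_1}}$. Inserting this, multiplying by $2^{k(s+t)}$, and noting $2^{k(s+t)}2^{-jt}=2^{js}2^{(k-j)(s+t)}\approx 2^{js}$ on the band $|k-j|\le N_0$, I reduce to taking the $\ell^r$ norm of a finite shift-sum of products $e_j\,(2^{js}\|\Delta_j v\|_{L^p})$, which H\"older with $\tfrac1r=\tfrac1{r_1}+\tfrac1{r_2}$ closes.

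For the remainder part (2), the crucial difference is that a summand $\Delta_k u\,\Delta_j v$ with $|k-j|\le1$ is supported in a \emph{ball} $2^{k}B$ rather than an annulus. Grouping as $R(u,v)=\sum_q R_q$ with $R_q=\Delta_q u\sum_{|i|\le1}\Delta_{q+i}v$ supported in $2^qB$, I get $\Delta_\nu R_q=0$ unless $q\ge\nu-N_1$, so $\Delta_\nu R(u,v)=\sum_{q\ge\nu-N_1}\Delta_\nu R_q$. H\"older in $x$ with $\tfrac1p=\tfrac1{p_1}+\tfrac1{p_2}$ gives $\|R_q\|_{L^p}\le C\,2^{-q(s_1+s_2)}\alpha_q\tilde\beta_q$, where $\alpha_q=2^{qs_1}\|\Delta_q u\|_{L^{p_1}}$ and $\tilde\beta_q=2^{qs_2}\sum_{|i|\le1}\|\Delta_{q+i}v\|_{L^{p_2}}$ is a finite shift-sum of $\beta_q=2^{qs_2}\|\Delta_q v\|_{L^{p_2}}$. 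Thus $2^{\nu(s_1+s_2)}\|\Delta_\nu R(u,v)\|_{L^p}\le C\sum_{q\ge\nu-N_1}2^{(\nu-q)(s_1+s_2)}\alpha_q\tilde\beta_q$. When $s_1+s_2>0$ the kernel $2^{(\nu-q)(s_1+s_2)}\mathbf{1}_{q\ge\nu-N_1}$ is $\ell^1$ in $\nu-q$, so Young bounds the $\ell^r$ norm by $C\|\alpha\tilde\beta\|_{\ell^r}$, and H\"older with $\tfrac1r=\tfrac1{r_1}+\tfrac1{r_2}$ (together with $\|\tilde\beta\|_{\ell^{r_2}}\lm\|\beta\|_{\ell^{r_2}}$) gives the stated bound. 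In the endpoint case $s_1+s_2=0$, $r=1$ the kernel is merely bounded and Young fails; instead I pass to the $\sup_\nu$ defining $B^0_{p,\infty}$ and estimate $\sup_\nu\sum_{q\ge\nu-N_1}\alpha_q\tilde\beta_q\le\|\alpha\tilde\beta\|_{\ell^1}\le\|\alpha\|_{\ell^{r_1}}\|\beta\|_{\ell^{r_2}}$, the last step being H\"older with $\tfrac1{r_1}+\tfrac1{r_2}=1$.

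The main obstacle is the bookkeeping at the two boundaries where the geometric summation just barely converges or fails: controlling $\|S_{j-1}u\|_{L^\infty}$ in the low-regularity paraproduct estimate relies essentially on $t<0$ to make the low-frequency sum summable, and the remainder estimate degenerates precisely at $s_1+s_2=0$, forcing the switch from an $\ell^r$ (Young) argument to an $\ell^\infty$ (supremum-plus-H\"older) argument that is available only when $r=1$. Everything else is routine once the spectral-support geometry---an annulus for $T_u v$ and a ball for $R(u,v)$---has been correctly identified.
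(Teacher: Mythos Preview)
Your argument is correct and is precisely the standard proof from \cite{BCD}; the paper itself does not supply a proof for this lemma, merely citing that reference, so there is nothing to compare against beyond noting that your write-up matches the cited source.
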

	
	We then have the following product laws:
	\begin{lemm}\label{product}\cite{BCD,He}
		{\rm(1)} For any $s>0$ and any $(p,r)$ in $[1,\infty]^2$, the space $L^{\infty} \cap B^s_{p,r}$ is an algebra, and a constant $C$ exists such that
		$$ \|uv\|_{B^s_{p,r}}\leq C(\|u\|_{L^{\infty}}\|v\|_{B^s_{p,r}}+\|u\|_{B^s_{p,r}}\|v\|_{L^{\infty}}). $$
		{\rm(2)} If $1\leq p,r\leq \infty,\ s_1\leq s_2,\ s_2>\frac{d}{p} (s_2 \geq \frac{d}{p}\ \text{if}\ r=1)$ and $s_1+s_2>\max(0, \frac{2d}{p}-d)$, there exists $C$ such that
		$$ \|uv\|_{B^{s_1}_{p,r}}\leq C\|u\|_{B^{s_1}_{p,r}}\|v\|_{B^{s_2}_{p,r}}. $$
		{\rm(3)} If $1\leq p\leq 2$, $\forall u\in B^{\frac d p -d}_{p,\infty}(\mathbb{R}^d)$, $\forall v\in B^{\frac d p}_{p,1}(\mathbb{R}^d)$, there exists $C$ such that
		$$ \|uv\|_{B^{\frac d p-d}_{p,\infty}}\leq C \|u\|_{B^{\frac d p-d}_{p,\infty}}\|v\|_{B^{\frac d p}_{p,1}}. $$		
	\end{lemm}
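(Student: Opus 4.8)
The plan is to prove all three product laws from Bony's decomposition $uv=T_uv+T_vu+R(u,v)$, controlling the three pieces with Lemma~\ref{Bony}, the elementary frequency-localized estimate
\[
\|T_uv\|_{B^\sigma_{p,r}}\le C\|u\|_{L^\infty}\|v\|_{B^\sigma_{p,r}}\qquad\text{for every }\sigma\in\mathbb{R}
\]
(valid because each block $S_{j-1}u\,\Delta_jv$ is spectrally supported in an annulus of size $2^j$ and $S_{j-1}$ is uniformly bounded on $L^\infty$), and the embeddings and Bernstein inequalities of Lemma~\ref{Besov}. Writing $\widetilde\Delta_k:=\Delta_{k-1}+\Delta_k+\Delta_{k+1}$, so that $R(u,v)=\sum_k\Delta_ku\,\widetilde\Delta_kv$, I would use throughout that $\Delta_ku\,\widetilde\Delta_kv$ is spectrally supported in a ball of size $\sim 2^k$, whence $\Delta_jR(u,v)=\sum_{k\ge j-N_0}\Delta_j(\Delta_ku\,\widetilde\Delta_kv)$ for a fixed $N_0$. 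For (1), with $s>0$, the two paraproducts are bounded by $\|u\|_{L^\infty}\|v\|_{B^s_{p,r}}$ and $\|v\|_{L^\infty}\|u\|_{B^s_{p,r}}$ from the elementary estimate, while $\|\Delta_j(\Delta_ku\,\widetilde\Delta_kv)\|_{L^p}\le C\|u\|_{L^\infty}\|\widetilde\Delta_kv\|_{L^p}$ turns $2^{js}\|\Delta_jR(u,v)\|_{L^p}$ into $C\|u\|_{L^\infty}\sum_{k\ge j-N_0}2^{(j-k)s}\big(2^{ks}\|\widetilde\Delta_kv\|_{L^p}\big)$; since $s>0$ the sequence $(2^{ms}\mathbf{1}_{m\le N_0})_m$ lies in $\ell^1$, and the discrete Young inequality in $\ell^r$ closes (1), so that $L^\infty\cap B^s_{p,r}$ is an algebra.

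For (2), the hypothesis $s_2>d/p$ (or $s_2=d/p$, $r=1$) gives $B^{s_2}_{p,r}\hookrightarrow L^\infty$ by Lemma~\ref{Besov}(4), hence $\|T_vu\|_{B^{s_1}_{p,r}}\le C\|v\|_{L^\infty}\|u\|_{B^{s_1}_{p,r}}\le C\|u\|_{B^{s_1}_{p,r}}\|v\|_{B^{s_2}_{p,r}}$. For $T_uv$, if $s_1<d/p$ I would sum Bernstein over the low frequencies, $\|S_{j-1}u\|_{L^\infty}\le C2^{j(d/p-s_1)}\|u\|_{B^{s_1}_{p,\infty}}$, which makes $2^{js_1}\|S_{j-1}u\,\Delta_jv\|_{L^p}\le C2^{jd/p}\|\Delta_jv\|_{L^p}\,\|u\|_{B^{s_1}_{p,\infty}}$ with $(2^{jd/p}\|\Delta_jv\|_{L^p})_j\in\ell^r$ of norm $\lesssim\|v\|_{B^{s_2}_{p,r}}$ (since $s_2\ge d/p$); if $s_1\ge d/p$ then $T_uv$ is even easier, using $u\in L^\infty$ (or, when $s_1=d/p$ with $r>1$, a low-frequency summation with a power-of-$j$ loss absorbed by the strict gap $s_1<s_2$). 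For the remainder, Bernstein applied to the band-limited function $\Delta_j(\Delta_ku\,\widetilde\Delta_kv)$ followed by H\"older gives $\|\Delta_j(\Delta_ku\,\widetilde\Delta_kv)\|_{L^p}\le C2^{jd/p}\|\Delta_ku\|_{L^p}\|\widetilde\Delta_kv\|_{L^p}$, so with $a_k:=2^{ks_1}\|\Delta_ku\|_{L^p}$ and $b_k:=2^{ks_2}\|\widetilde\Delta_kv\|_{L^p}$,
\[
2^{js_1}\|\Delta_jR(u,v)\|_{L^p}\le C\sum_{k\ge j-N_0}2^{(j-k)(s_1+d/p)}\,2^{-k(s_2-d/p)}\,a_kb_k,
\]
and $s_1+s_2>\max(0,2d/p-d)$ is exactly what makes the right side summable and $\ell^r$-bounded in $j$ (the endpoint $s_2=d/p$, $r=1$ handled by reading it as a genuine convolution in $j-k$, which is $\ell^1$ because then $s_1+s_2>0$).

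For (3) --- the critical endpoint $s_1=d/p-d\le 0$, $s_2=d/p$, $u\in B^{s_1}_{p,\infty}$, $v\in B^{s_2}_{p,1}$, $1\le p\le 2$ --- I would run the same scheme: $v\in B^{d/p}_{p,1}\hookrightarrow L^\infty$ by the $r=1$ case of Lemma~\ref{Besov}(4) controls $T_vu$; the low-frequency Bernstein summation (now $d/p-s_1=d$) gives $\|T_uv\|_{B^{s_1}_{p,1}}\lesssim\|u\|_{B^{s_1}_{p,\infty}}\|v\|_{B^{s_2}_{p,1}}$, which embeds into $B^{s_1}_{p,\infty}$; and for the remainder the Bernstein--H\"older bound reads
\[
2^{js_1}\|\Delta_jR(u,v)\|_{L^p}\le C\sum_{k\ge j-N_0}2^{(j-k)(2d/p-d)}\,a_kb_k,\qquad \|a\|_{\ell^\infty}\lesssim\|u\|_{B^{s_1}_{p,\infty}},\ \ \|b\|_{\ell^1}\lesssim\|v\|_{B^{s_2}_{p,1}},
\]
whose supremum over $j$ is dominated by $C\|a\|_{\ell^\infty}\|b\|_{\ell^1}$ because $2d/p-d\ge0$ for $p\le 2$ (for $p<2$ the kernel $2^{m(2d/p-d)}\mathbf{1}_{m\le N_0}$ lies in $\ell^1$; for $p=2$ one bounds $\sum_{k\ge j-N_0}a_kb_k\le\|a\|_{\ell^\infty}\|b\|_{\ell^1}$ outright), giving (3).

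The hard part will be precisely the remainder estimate in the low-integrability range $1\le p\le 2$, and most sharply the critical case (3): there Lemma~\ref{Bony}(2) cannot be applied with $p_1=p_2=p$ (its hypothesis $\tfrac1{p_1}+\tfrac1{p_2}\le1$ fails), so the regularity gain in $R(u,v)$ must be extracted by hand from the smoothing effect of $\Delta_j$ on the high-frequency product $\Delta_ku\,\widetilde\Delta_kv$ --- that is, through the generalized Bernstein inequality for band-limited functions --- and the borderline exponents ($s_2=d/p$ with $r=1$, and $p=2$ in (3), where the geometric kernel degenerates to a constant) have to be verified by reading the relevant sums as discrete convolutions rather than crudely pulling out $\ell^\infty$ norms. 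Everything else is routine summation of geometric series together with the embeddings of Lemma~\ref{Besov}.
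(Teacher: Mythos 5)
Your overall scheme (Bony decomposition, the elementary paraproduct bound, Bernstein for the remainder) is the standard route — the paper itself does not prove this lemma but quotes it from [BCD]/[He], where essentially this argument appears — and your treatment of part (1) and of the two paraproducts in (2) and (3) is correct. The genuine problem is the block estimate on which your remainder bounds in (2) and (3) rest, namely $\|\Delta_j(\Delta_k u\,\widetilde\Delta_k v)\|_{L^p}\le C2^{jd/p}\|\Delta_k u\|_{L^p}\|\widetilde\Delta_k v\|_{L^p}$. This comes from Young's inequality $\|\check\varphi_j\ast h\|_{L^p}\le\|\check\varphi_j\|_{L^q}\|h\|_{L^{p/2}}$ with $\|\check\varphi_j\|_{L^q}\sim 2^{jd/p}$, which requires $p/2\ge 1$; for $1\le p<2$ — precisely the range this paper needs, and your case (3) — it is false. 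Counterexample for $d=1$, $p=1$: take $u$ with $\widehat{u}$ a smooth bump on the annulus $|\xi|\sim 2^k$ normalized so $\|u\|_{L^1}\sim 1$, and $v=\overline{u}$; then $\|\Delta_{-1}(uv)\|_{L^1}\ge\big|\int\Delta_{-1}(|u|^2)\,{\rm d}x\big|=\|u\|_{L^2}^2\sim 2^{k}$, while your right-hand side stays $O(1)$. The missing factor $2^{kd(2/p-1)}$ is exactly the source of the threshold $2d/p-d$ in (2); indeed your own displayed remainder bound, if true, would be summable under the mere condition $s_1+s_2>0$ (rewrite the summand as $2^{(j-k)(s_1+s_2)}2^{j(d/p-s_2)}a_kb_k$), so your claim that $s_1+s_2>\max(0,2d/p-d)$ is ``exactly'' what is needed does not match your own estimate — that mismatch is the symptom of the slip.

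The repair keeps your architecture. For $1\le p\le 2$, bound the band-limited product by H\"older in the quasi-norm $L^{p/2}$, use the Bernstein--Nikolskii inequality at frequency $2^k$ (the spectrum of $\Delta_k u\,\widetilde\Delta_k v$ lies in a ball of radius $C2^k$) to pass from $L^{p/2}$ to $L^1$ at cost $2^{kd(2/p-1)}$, and then $\|\Delta_j h\|_{L^p}\le\|\check\varphi_j\|_{L^p}\|h\|_{L^1}\le C2^{jd(1-1/p)}\|h\|_{L^1}$. This yields
\[
2^{js_1}\|\Delta_jR(u,v)\|_{L^p}\le C\sum_{k\ge j-N_0}2^{(j-k)\left(s_1+s_2-(2d/p-d)\right)}\,2^{j(d/p-s_2)}\,a_kb_k,
\]
which is $\ell^r$-summable exactly under $s_1+s_2>2d/p-d$ together with $s_2\ge d/p$ (the endpoint $s_2=d/p$, $r=1$ being a convolution of an $\ell^1$ kernel with $a_kb_k\in\ell^1$), and in case (3) the kernel degenerates to $1$, so $\sup_j 2^{js_1}\|\Delta_jR\|_{L^p}\le C\|a\|_{\ell^\infty}\|b\|_{\ell^1}$ as you want. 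Equivalently, you can avoid the hand computation by embedding $u\in B^{s_1}_{p,r}\hookrightarrow B^{s_1-d(2/p-1)}_{p',r}$ and applying Lemma~\ref{Bony}(2) to $R$ with the pair $(p',p)$, then embedding $B^{s_1+s_2-d(2/p-1)}_{1,r}$ back into an $L^p$-based space. With this correction the remaining steps of your proposal go through.
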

	
	\begin{prop}\label{dual}\cite{BCD}
		Let $s\in\mathbb{R},\ 1\leq p,r\leq\infty.$
		\begin{equation}\left\{
			\begin{array}{ll}
				B^s_{p,r}\times B^{-s}_{p',r'}&\longrightarrow\mathbb{R},  \\
				(u,\phi)&\longmapsto \sum\limits_{|j-j'|\leq 1}\langle \Delta_j u,\Delta_{j'}\phi\rangle,
			\end{array}\right.
		\end{equation}
		defines a continuous bilinear functional on $B^s_{p,r}\times B^{-s}_{p',r'}$. Denoted by $Q^{-s}_{p',r'}$ the set of functions $\phi$ in $\mathcal{S}'$ such that
		$\|\phi\|_{B^{-s}_{p',r'}}\leq 1$. If $u$ is in $\mathcal{S}'$, then we have
		$$\|u\|_{B^s_{p,r}}\leq C\sup_{\phi\in Q^{-s}_{p',r'}}\langle u,\phi\rangle.$$
	\end{prop}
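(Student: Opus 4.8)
The proposition splits into two claims: continuity of the dyadic bilinear form on $B^s_{p,r}\times B^{-s}_{p',r'}$, and the fact that this form is norming for $B^s_{p,r}$. For the first, the plan is to apply H\"older's inequality twice. For a pair $(j,j')$ with $|j-j'|\le1$, H\"older in the space variable gives $|\langle\Delta_j u,\Delta_{j'}\phi\rangle|\le\|\Delta_j u\|_{L^p}\|\Delta_{j'}\phi\|_{L^{p'}}$; writing the right-hand side as $\bigl(2^{js}\|\Delta_j u\|_{L^p}\bigr)\bigl(2^{-j's}\|\Delta_{j'}\phi\|_{L^{p'}}\bigr)2^{(j'-j)s}$ and using $2^{(j'-j)s}\le 2^{|s|}$ (since $|j-j'|\le1$), one sums over $j\ge-1$ and over the at most three admissible $j'$ and applies H\"older in $\ell^r\times\ell^{r'}$, which is valid for every $1\le r\le\infty$ because $\frac1r+\frac1{r'}=1$. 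This yields $\bigl|\sum_{|j-j'|\le1}\langle\Delta_j u,\Delta_{j'}\phi\rangle\bigr|\le C\|u\|_{B^s_{p,r}}\|\phi\|_{B^{-s}_{p',r'}}$, hence both absolute convergence of the defining series and the asserted continuity.

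For the reverse inequality the plan is to manufacture a near-extremal test function by dualising first in $x$ and then in the dyadic index. Let $u\in B^s_{p,r}$ (if $\|u\|_{B^s_{p,r}}=\infty$ the same construction applied to truncations already forces the right-hand side to be infinite) and fix $\varepsilon>0$. Using $\|f\|_{L^p}=\sup\{\langle f,g\rangle:\|g\|_{L^{p'}}\le1\}$, valid for all $1\le p\le\infty$ on $\mathbb{R}^d$, choose for each $j$ a function $h_j$ with $\|h_j\|_{L^{p'}}\le1$ and $\langle\Delta_j u,h_j\rangle\ge(1-\varepsilon)\|\Delta_j u\|_{L^p}$; using $\|(a_j)_j\|_{\ell^r}=\sup\{\sum_j a_jb_j:\|(b_j)_j\|_{\ell^{r'}}\le1\}$, valid for all $1\le r\le\infty$, choose nonnegative weights $(b_j)_j$ with $\|(b_j)_j\|_{\ell^{r'}}\le1$ and $\sum_j b_j2^{js}\|\Delta_j u\|_{L^p}\ge(1-\varepsilon)\|u\|_{B^s_{p,r}}$. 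Then set $\phi:=\sum_{j\ge-1}b_j2^{js}\Delta_j h_j$. First one checks the series converges in $\mathcal{S}'$, since testing it against a Schwartz function produces a series that decays rapidly in $j$ (each $\Delta_j h_j$ is spectrally localised in the same dyadic annulus as $\Delta_j$). The key point is then the quasi-orthogonality $\Delta_k\Delta_j\equiv0$ for $|k-j|\ge2$, which gives $\Delta_k\phi=\sum_{|j-k|\le1}b_j2^{js}\Delta_k\Delta_j h_j$; combined with the $L^{p'}$-boundedness of the $\Delta$-operators recalled in the Preliminaries this yields $2^{-ks}\|\Delta_k\phi\|_{L^{p'}}\le C\sum_{|j-k|\le1}b_j$, and taking the $\ell^{r'}$ norm over $k$ gives $\|\phi\|_{B^{-s}_{p',r'}}\le C_0\|(b_j)_j\|_{\ell^{r'}}\le C_0$, so that $\phi/C_0\in Q^{-s}_{p',r'}$.

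Finally the plan is to evaluate $\langle u,\phi\rangle$ in the dyadic sense: expanding $\Delta_{k'}\phi$ into its finitely many nonzero contributions, using that each $\Delta_k$ is self-adjoint with a real even symbol, and using $\sum_{|k-k'|\le1}\Delta_k\Delta_{k'}=\mathrm{Id}$ (the off-diagonal terms vanishing by the support relations), the double sum telescopes to $\langle u,\phi\rangle=\sum_j b_j2^{js}\langle\Delta_j u,h_j\rangle$, the rearrangement being justified by absolute convergence (a direct H\"older estimate as in the first part). Hence $\langle u,\phi\rangle\ge(1-\varepsilon)\sum_j b_j2^{js}\|\Delta_j u\|_{L^p}\ge(1-\varepsilon)^2\|u\|_{B^s_{p,r}}$, so $\sup_{\psi\in Q^{-s}_{p',r'}}\langle u,\psi\rangle\ge\langle u,\phi/C_0\rangle\ge(1-\varepsilon)^2\|u\|_{B^s_{p,r}}/C_0$, and letting $\varepsilon\to0$ gives the claim with $C=C_0$. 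The hardest part will be the endpoint bookkeeping in this last step: checking that the $L^p$- and $\ell^r$-duality suprema are attained up to $\varepsilon$ (harmless on $\mathbb{R}^d$ but needing a remark, especially when $p$ or $r$ equals $1$ or $\infty$), that $\phi$ genuinely belongs to $\mathcal{S}'$, and that the $j$-summation may legitimately be pulled through the dyadic bilinear form — the last being exactly where the continuity proved in the first part is used.
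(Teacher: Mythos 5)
Your proof is correct and follows essentially the same route as the source the paper cites for this statement (the paper itself states Proposition \ref{dual} with a reference to \cite{BCD} and gives no proof): the double H\"older estimate for continuity, and the norming inequality via a near-extremal $\phi=\sum_j b_j 2^{js}\Delta_j h_j$ built from $L^p$--$L^{p'}$ and $\ell^r$--$\ell^{r'}$ duality together with the quasi-orthogonality $\Delta_k\Delta_j=0$ for $|k-j|\ge 2$, is exactly the standard argument there. The only refinement worth adding is for general $u\in\mathcal{S}'$ with some $\Delta_j u\notin L^p$: take the $h_j$ in $C_c^\infty$ (so that every pairing $\langle\Delta_k u,\Delta_{k'}\phi\rangle$ is defined) and restrict to finitely many $j$, which makes your truncation remark precise and lets the right-hand side blow up as required.
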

	
	We now state the definition of Osgood modulus of continuity and Osgood lemma.
	
	\begin{defi}\cite{BCD}
		Let $a\in (0,1]$. Nondecreasing nonzero continuous function $\mu$ is defined on $[0,a]$ and $\mu:[0,a]\rightarrow\mathbb{R}^+\ s.t.\ \mu(0)=0$, then $\mu$ is called a modulus of continuity. 
		And if $\mu$ is called an Osgood modulus of continuity if
		$$\int_0^a \frac{\rm{d}r}{\mu(r)}=\infty$$
	\end{defi}
	
	The Osgood lemma are useful to prove the uniqueness of some ordinary differential equations and one can utilize a procedure to construct a large families of Osgood functions.
	
	\begin{lemm}[Osgood lemma]\label{Osgood}\cite{BCD}
		Let $\rho$ be a measurable function mapping $[t_0,T]$ to $[0,a]$, $\gamma$ is a locally integrable function from $[t_0,T]$ to $\mathbb{R}^+$, and $\mu$ is a nondecreasing function mapping $[0,a]$ to $\mathbb{R}^+$. Assume that, for some nonnegative real number $c$, the function $\rho$ satisfies
		$$\rho(t)\leq c+\int_{t_0}^t\gamma(t^\prime)\mu(\rho(t^\prime)){\rm d}t^\prime\quad for\ a.e.\ t\in[t_0,T]$$
		$\bullet$ If $c$ is positive 
		$$-\mathcal{M}(\rho(t))+\mathcal{M}(c)\leq\int_{t_0}^{t}\gamma(t^\prime){\rm d}t^\prime\quad with\quad\mathcal{M}(x)=\int_x^a\frac{{\rm d}r}{\mu(r)}\quad for\ a.e.\ t\in[t_0,T]$$
		$\bullet$ If $c=0$ and $\mu$ is an Osgood modulus of continuity, then $\rho=0,\ a.e.\ t\in[t_0,T]$.
	\end{lemm}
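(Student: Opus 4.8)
This is a quantitative, Osgood-type sharpening of Gronwall's inequality, and the plan is to treat the two bullets in order, deducing the case $c=0$ from the case $c>0$ by a limiting argument.

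For $c>0$ I would introduce the comparison function $R_c(t)\triangleq c+\int_{t_0}^t\gamma(\tau)\mu(\rho(\tau))\,{\rm d}\tau$. Since $\rho$ is $[0,a]$-valued and $\mu$ is nondecreasing, $\mu(\rho(\tau))\leq\mu(a)$, so the integrand is integrable on $[t_0,T]$; hence $R_c$ is absolutely continuous, nondecreasing, satisfies $R_c(t)\geq R_c(t_0)=c>0$, and dominates $\rho$ in the sense $\rho(t)\leq R_c(t)$ for a.e.\ $t$. Monotonicity of $\mu$ then turns the hypothesis into the differential inequality $R_c'(t)=\gamma(t)\mu(\rho(t))\leq\gamma(t)\mu(R_c(t))$ a.e. Observing that $\mathcal{M}'=-1/\mu$, I would rewrite this as $-\tfrac{{\rm d}}{{\rm d}t}\mathcal{M}(R_c(t))\leq\gamma(t)$ a.e., integrate over $[t_0,t]$ to obtain $\mathcal{M}(c)-\mathcal{M}(R_c(t))\leq\int_{t_0}^t\gamma(\tau)\,{\rm d}\tau$, and finally use that $\mathcal{M}$ is nonincreasing together with $\rho(t)\leq R_c(t)$ to replace $R_c(t)$ by $\rho(t)$, which yields exactly $-\mathcal{M}(\rho(t))+\mathcal{M}(c)\leq\int_{t_0}^t\gamma(\tau)\,{\rm d}\tau$.

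For $c=0$, the key remark is that $\rho(t)\leq\varepsilon+\int_{t_0}^t\gamma(\tau)\mu(\rho(\tau))\,{\rm d}\tau$ holds for every $\varepsilon>0$, so applying the first part with the constant $\varepsilon$ gives $\mathcal{M}(\varepsilon)-\mathcal{M}(\rho(t))\leq\int_{t_0}^T\gamma(\tau)\,{\rm d}\tau$ for a.e.\ $t$. I would fix a sequence $\varepsilon_n\downarrow 0$ so that the exceptional null sets merge into a single null set, and note that on its complement $\mathcal{M}(\varepsilon_n)\leq\mathcal{M}(\rho(t))+\int_{t_0}^T\gamma(\tau)\,{\rm d}\tau$ for all $n$. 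Since the Osgood condition forces $\mathcal{M}(\varepsilon_n)\to\infty$, whereas $\mathcal{M}(\rho(t))<\infty$ whenever $\rho(t)>0$ and the right-hand side is finite, this is impossible unless $\rho(t)=0$; hence $\rho=0$ a.e.

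The only delicate points are measure-theoretic: justifying the chain rule for $\mathcal{M}\circ R_c$ when $\mu$ is merely nondecreasing rather than continuous (handled by noting that on each interval $[c,b]$ the map $\mathcal{M}$ is Lipschitz, since $1/\mu\leq 1/\mu(c)$ there, and hence carries absolutely continuous functions to absolutely continuous functions with the expected a.e.\ derivative bound — after first, if needed, extending $\mu$ to a bounded nondecreasing function on all of $[0,\infty)$ so that $\mu(R_c)$ always makes sense), and ensuring the a.e.\ inequalities survive the passage $\varepsilon\to0$ (handled by restricting to the countable sequence $\varepsilon_n$). Everything else is elementary manipulation of monotone functions and the fundamental theorem of calculus for absolutely continuous functions.
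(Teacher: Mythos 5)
Your argument is correct, and it is essentially the standard proof of this lemma from the cited reference [BCD] (the paper itself states the lemma without proof): introduce $R_c(t)=c+\int_{t_0}^t\gamma\,\mu(\rho)\,{\rm d}\tau$, use monotonicity of $\mu$ to get $-\frac{\rm d}{{\rm d}t}\mathcal{M}(R_c)\leq\gamma$, integrate, and recover the $c=0$ case by letting $\varepsilon_n\downarrow0$ along a countable sequence. Your parenthetical fixes are exactly the right ones: extending $\mu$ constantly beyond $a$ so that $\mu(R_c)$ and $\mathcal{M}(R_c)$ make sense when $R_c>a$, noting $\mathcal{M}$ is Lipschitz on $[c,\infty)$ with constant $1/\mu(c)$ so $\mathcal{M}\circ R_c$ is absolutely continuous (alternatively, one can avoid the pointwise chain rule altogether by the change-of-variables formula for the nondecreasing AC function $R_c$, or by the one-sided estimate $\mathcal{M}(R_c(t))-\mathcal{M}(R_c(t+h))\leq\bigl(R_c(t+h)-R_c(t)\bigr)/\mu(R_c(t))$), and merging the null sets over the sequence $\varepsilon_n$ before invoking $\mathcal{M}(\varepsilon_n)\to\infty$.
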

	
	Next, we state some useful result in the transport equation theory, which are crucial to the proofs of our main thorems later. Let us consider the linear transport equation:
	\begin{equation}
		\left\{
		\begin{array}{l}\label{transport}
			\partial_t f+v\cdot \nabla f=g\\
			f|_{t=0}=f_0
		\end{array}
		\right.
	\end{equation}
	
	\begin{lemm}[A priori estimates in Besov spaces]\label{priori estimate}\cite{BCD}
		Let $s\in\mathbb{R},\ 1\leq p,r\leq\infty$. Assume that
		$$ s\geq -d\min(\frac{1}{p_1},\frac{1}{p^\prime})$$
		
		There exists a constant $C$ such that for all solutions $f\in L^{\infty}([0,T];B^s_{p,r})$ of \eqref{transport} in $d$-dimensional space with initial data $f_0\in B^s_{p,r}$, and $g\in L^1([0,T];B^s_{p,r})$, we have for a.e. $t\in[0,T]$,
		$$ \|f(t)\|_{B^s_{p,r}}\leq \|f_0\|_{B^s_{p,r}}+\int_0^t \|g(t')\|_{B^s_{p,r}}\rm{d}t'+\int_0^t V_{p_1}^{'} (t^{'})\|f(t)\|_{B^s_{p,r}}{\rm d}t{'} $$
		or
		$$ \|f(t)\|_{B^s_{p,r}}\leq e^{CV_{p_1}(t)}\Big(\|f_0\|_{B^s_{p,r}}+\int_0^t e^{-CV_{p_1}(t')}\|g(t')\|_{B^s_{p,r}}{\rm d}t'\Big) $$
		with
		\begin{equation*}
			V_{p_1}'(t)=\left\{\begin{array}{ll}
				
				\|\nabla v\|_{B^{\frac{d}{p_1}}_{p_1,\infty}\cap L^\infty},\ &\text{if}\ s<1+\frac{d}{p_1}\\
				\|\nabla v\|_{B^{s-1}_{p_1,r}},\ &\text{if}\ s>1+\frac{d}{p_1}\ {\rm or}\  \left\{ s=1+\frac{d}{p_1} \ {\rm and}\  r=1\right\}
			\end{array}\right.
		\end{equation*}
		If $f=v,$ for all $s>0$
		$$ V_{p_1}^{'}(t)=\|\nabla v(t)\|_{L^{\infty}}.$$
	\end{lemm}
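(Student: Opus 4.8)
The plan is to localize the transport equation in frequency, run an $L^p$ energy estimate on each dyadic block, control the resulting commutator by a Bony-type decomposition, and then close the estimate with Gr\"onwall's inequality. First I would apply the dyadic block $\Delta_j$ to \eqref{transport}. Writing $f_j:=\Delta_j f$ and commuting $\Delta_j$ past the convection term yields
$$\partial_t f_j+v\cdot\nabla f_j=\Delta_j g+R_j,\qquad R_j:=v\cdot\nabla\Delta_j f-\Delta_j(v\cdot\nabla f),$$
so that the error relative to an exact transport along $v$ is the commutator $R_j=[v\cdot\nabla,\Delta_j]f$.

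Next I would perform an $L^p$ energy estimate on this localized equation. For $1\le p<\infty$, multiplying by $|f_j|^{p-2}f_j$ and integrating over $\mathbb{R}^d$, the convection term integrates by parts to $-\tfrac1p\int(\operatorname{div}v)|f_j|^p\,{\rm d}x$, so only the divergence of $v$ survives; the case $p=\infty$ follows by a limiting argument. After dividing by $\|f_j\|_{L^p}^{p-1}$ this gives
$$\frac{\rm d}{{\rm d}t}\|f_j\|_{L^p}\le \tfrac1p\|\operatorname{div}v\|_{L^\infty}\|f_j\|_{L^p}+\|\Delta_j g\|_{L^p}+\|R_j\|_{L^p}.$$
Integrating in time and multiplying by $2^{js}$ produces a pointwise-in-$j$ bound for $2^{js}\|f_j(t)\|_{L^p}$ in terms of the data, the source, and the weighted commutator.

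The main work lies in estimating the commutator. The crucial ingredient is the commutator lemma from \cite{BCD}: under the standing hypothesis $s\ge-d\min(\tfrac1{p_1},\tfrac1{p'})$ one has
$$\Big\|\big(2^{js}\|R_j\|_{L^p}\big)_{j\ge-1}\Big\|_{\ell^r}\le C\,V_{p_1}'(t)\,\|f\|_{B^s_{p,r}},$$
with $V_{p_1}'$ exactly the piecewise quantity stated in the lemma. To obtain this I would decompose $R_j$ via Bony's paraproduct and remainder (Lemma \ref{Bony}) and estimate each piece with Bernstein-type inequalities. The case split on $s$ relative to $1+\tfrac d{p_1}$ emerges precisely here: in the subcritical range $s<1+\tfrac d{p_1}$ the paraproduct and remainder contributions are controlled by $\nabla v$ in $B^{d/p_1}_{p_1,\infty}\cap L^\infty$, whereas for $s>1+\tfrac d{p_1}$ (and the endpoint $s=1+\tfrac d{p_1},\,r=1$) the highest-regularity term forces one to place $\nabla v$ in $B^{s-1}_{p_1,r}$ instead. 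Tracking these thresholds so that the two expressions for $V_{p_1}'$ arise correctly is the delicate part of the argument.

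Finally I would take the $\ell^r$ norm in $j$ of the time-integrated inequality, insert the commutator bound, and arrive at
$$\|f(t)\|_{B^s_{p,r}}\le\|f_0\|_{B^s_{p,r}}+\int_0^t\|g(t')\|_{B^s_{p,r}}\,{\rm d}t'+C\int_0^t V_{p_1}'(t')\|f(t')\|_{B^s_{p,r}}\,{\rm d}t',$$
which is the first asserted estimate; Gr\"onwall's inequality then yields the exponential form with factor $e^{CV_{p_1}(t)}$, where $V_{p_1}(t)=\int_0^tV_{p_1}'(t')\,{\rm d}t'$. For the special case $f=v$ with $s>0$, the transported and transporting fields coincide and the top-order contribution to the commutator cancels, so re-running the estimate shows only $\|\nabla v\|_{L^\infty}$ is needed, giving $V_{p_1}'(t)=\|\nabla v(t)\|_{L^\infty}$. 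The hard part throughout is the commutator estimate and the bookkeeping of the regularity thresholds that separate the two regimes of $V_{p_1}'$.
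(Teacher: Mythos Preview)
The paper does not prove this lemma at all: it is stated as a preliminary result and attributed to \cite{BCD} (Bahouri--Chemin--Danchin), so there is no in-paper proof to compare against. Your sketch is the standard argument from that reference (Theorem~3.14 there): localize by $\Delta_j$, run the $L^p$ energy estimate on each block picking up $\|\operatorname{div}v\|_{L^\infty}$ from the integration by parts, control the commutator $[v\cdot\nabla,\Delta_j]f$ via the Bony decomposition with the case split on $s$ versus $1+d/p_1$, take the $\ell^r$ norm, and close with Gr\"onwall. That is exactly the route the cited source takes, so your proposal is correct and aligned with the intended proof.
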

	
	\begin{lemm}\label{refine priori}\cite{LuoRefine,He}
		If $$\{s\geq\frac d 2, p>2, 1\leq r\leq\infty\}\ or\ \{s>\frac d p, 1\leq p\leq2, 1\leq r\leq\infty\}\ or\ \{s=\frac d p, 1\leq p\leq2, r=1\}$$
		then for the solution $f\in L^\infty([0,T];B^s_{p,r})$ of \eqref{transport} with $v\in L^1([0,T];B^{s+1}_{p,r})$, then the initial data $f_0\in B^s_{p,r})$ and $g\in L^1([0,T];B^s_{p,r}$, we then have
		$$ \| f(t)\|_{B^s_{p,r}}\leq e^{CV(t)}(\|f_0\|_{B^s_{p,r}}+\int_0^t e^{-CV(t)}\|g(\tau)\|_{B^s_{p,r}}{\rm d}\tau)$$
		with $V(t)=\int_0^t \|v(\tau)\|_{B^{s+1}_{p,r}}{\rm d}\tau$ and $C=C(p,r,s)$.
	\end{lemm}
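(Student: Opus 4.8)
The plan is to obtain this refined estimate from the classical transport estimate of Lemma~\ref{priori estimate} by choosing the auxiliary Lebesgue index $p_1$ appropriately and invoking the Besov embeddings of Lemma~\ref{Besov}, rather than re-running the Littlewood--Paley argument. The key preliminary observation is that under each of the three sets of hypotheses one has $s\ge d/p$ together with $B^s_{p,r}\hookrightarrow L^\infty$ (indeed $s>d/p$ in the first two regimes, while $s=d/p$ and $r=1$ in the third). Since $v\in L^1([0,T];B^{s+1}_{p,r})$, Lemma~\ref{Besov}(8) gives $\nabla v\in L^1([0,T];B^s_{p,r})$, and then Lemma~\ref{Besov}(3),(4) yield, for a.e.\ $\tau\in[0,T]$,
$$\|\nabla v(\tau)\|_{B^{d/p}_{p,\infty}\cap L^\infty}+\|\nabla v(\tau)\|_{B^{s-1}_{p,r}}\lesssim\|\nabla v(\tau)\|_{B^s_{p,r}}\lesssim\|v(\tau)\|_{B^{s+1}_{p,r}}.$$
Consequently, whenever $s\ne 1+\frac dp$ we may apply Lemma~\ref{priori estimate} with $p_1=p$: the first branch ($s<1+\frac dp$) produces $V'_p(\tau)=\|\nabla v(\tau)\|_{B^{d/p}_{p,\infty}\cap L^\infty}$, the second branch ($s>1+\frac dp$, or $s=1+\frac dp$ and $r=1$) produces $V'_p(\tau)=\|\nabla v(\tau)\|_{B^{s-1}_{p,r}}$, and in both cases $V'_p(\tau)\le C\|v(\tau)\|_{B^{s+1}_{p,r}}$; the standing requirement $s\ge-d\min(1/p_1,1/p')$ is automatic because $s>0$ throughout.

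\textbf{Main obstacle.} The delicate point is the borderline exponent $s=1+\frac dp$ with $r>1$, which is covered by none of the branches of Lemma~\ref{priori estimate} when $p_1=p$. Here I would pick an auxiliary index $p_1\in(p,\infty]$ with $\frac1{p_1}\ge\frac1p-\frac1d$ (such $p_1$ always exists, and one may take $p_1=\infty$ as soon as $p\ge d$, in particular in the one-dimensional setting of this paper). Then $s=1+\frac dp>1+\frac d{p_1}$, so the second branch of Lemma~\ref{priori estimate} applies and gives $V'_{p_1}(\tau)=\|\nabla v(\tau)\|_{B^{s-1}_{p_1,r}}$; the Besov embedding $B^s_{p,r}\hookrightarrow B^{s-d/p+d/p_1}_{p_1,r}$ of Lemma~\ref{Besov}(3), together with $s-\frac dp+\frac d{p_1}\ge s-1$, then shows
$$V'_{p_1}(\tau)=\|\nabla v(\tau)\|_{B^{s-1}_{p_1,r}}\lesssim\|\nabla v(\tau)\|_{B^{s-d/p+d/p_1}_{p_1,r}}\lesssim\|\nabla v(\tau)\|_{B^s_{p,r}}\lesssim\|v(\tau)\|_{B^{s+1}_{p,r}}.$$
Thus, in every case allowed by the hypotheses there is an admissible $p_1$ with $V'_{p_1}(\tau)\le C\|v(\tau)\|_{B^{s+1}_{p,r}}$ for a.e.\ $\tau\in[0,T]$, where $C=C(p,r,s)$.

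\textbf{Conclusion.} Feeding this bound into the first (differential) estimate of Lemma~\ref{priori estimate} gives
$$\|f(t)\|_{B^s_{p,r}}\le\|f_0\|_{B^s_{p,r}}+\int_0^t\|g(\tau)\|_{B^s_{p,r}}\,{\rm d}\tau+C\int_0^t\|v(\tau)\|_{B^{s+1}_{p,r}}\,\|f(\tau)\|_{B^s_{p,r}}\,{\rm d}\tau,$$
and an application of Grönwall's lemma (comparing the right-hand side with the solution of the associated linear ODE) yields exactly
$$\|f(t)\|_{B^s_{p,r}}\le e^{CV(t)}\Big(\|f_0\|_{B^s_{p,r}}+\int_0^t e^{-CV(\tau)}\|g(\tau)\|_{B^s_{p,r}}\,{\rm d}\tau\Big),\qquad V(t)=\int_0^t\|v(\tau)\|_{B^{s+1}_{p,r}}\,{\rm d}\tau.$$
As an alternative, one could bypass Lemma~\ref{priori estimate} entirely --- localize \eqref{transport} with $\Delta_j$, run the usual $L^p$ energy estimate on each dyadic block, and close the argument with a commutator estimate for $[v\cdot\nabla,\Delta_j]f$ tuned to these exponents --- but since that amounts to re-proving Lemma~\ref{priori estimate}, the route above is the most economical.
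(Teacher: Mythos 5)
Your reduction to Lemma~\ref{priori estimate} is sound on most of the stated range, and it is worth noting that the paper itself offers no proof of this lemma (it is quoted from \cite{LuoRefine,He}, where it is proved directly by a Littlewood--Paley/commutator argument); so a derivation from the classical estimate is a legitimate and more economical route where it works. For $s\neq 1+\frac dp$ your choice $p_1=p$ together with $B^s_{p,r}\hookrightarrow B^{d/p}_{p,\infty}\cap L^\infty$ (valid in all three regimes) and $\nabla v\in B^s_{p,r}$ does give $V'_{p}\leq C\|v\|_{B^{s+1}_{p,r}}$, and the Gr\"onwall step is correct. The weak point is exactly the case you single out as the main obstacle, $s=1+\frac dp$ with $r>1$: your fix requires an auxiliary index $p_1\in(p,\infty]$ with $\frac1{p_1}\geq\frac1p-\frac1d$, and your claim that ``such $p_1$ always exists'' fails when $p=\infty$, since then $(p,\infty]$ is empty and one is stuck with $p_1=p=\infty$, for which neither branch of Lemma~\ref{priori estimate} covers $s=1+\frac d{p_1}$, $r>1$. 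This corner is not vacuous for the lemma as stated: the first regime allows $p=\infty$ (it only asks $p>2$), so in the one-dimensional setting $(s,p,r)=(1,\infty,r)$ with $r>1$ satisfies $s\geq\frac12$ and lies in the borderline case, yet is unreachable by your argument.

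To close that case one cannot avoid (a piece of) the direct argument you relegate to a remark: localize \eqref{transport} with $\Delta_j$, split $v\partial_x f=T_v\partial_x f+T_{\partial_x f}v+R(v,\partial_x f)$, control the commutator $[\Delta_j,T_v\partial_x]f$ by $\|\nabla v\|_{L^\infty}\|f\|_{B^s_{p,r}}$, and estimate $T_{\partial_x f}v$ and $R(v,\partial_x f)$ by Lemma~\ref{Bony} using the full gain $\nabla v\in B^{s}_{p,r}=B^{1+d/p}_{p,r}\hookrightarrow L^\infty$ (e.g.\ $\|T_{\partial_x f}v\|_{B^s_{p,r}}\lesssim\|\partial_x f\|_{B^{-1}_{\infty,\infty}}\|v\|_{B^{s+1}_{p,r}}$ and $R(v,\partial_x f)\in B^{s+1}_{p,r/2}\hookrightarrow B^s_{p,r}$ since $(s+1)+(s-1)>0$); this is precisely how \cite{LuoRefine,He} prove the lemma uniformly, borderline included. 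So either restrict your shortcut to $p<\infty$ and treat $p=\infty$, $s=1+\frac dp$, $r>1$ by this direct estimate, or run the direct proof throughout; as written, the proposal has a genuine (if narrow) gap at that endpoint.
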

	\begin{lemm}\label{existence}\cite{BCD}
		Let $1\leq p\leq p_1\leq\infty,\ 1\leq r\leq\infty,\ s> -d\min(\frac 1 {p_1}, \frac 1 {p'})$. Let $f_0\in B^s_{p,r}$, $g\in L^1([0,T];B^s_{p,r})$, and let $v$ be a time-dependent vector field such that $v\in L^\rho([0,T];B^{-M}_{\infty,\infty})$ for some $\rho>1$ and $M>0$, and
		$$
		\begin{array}{ll}
			\nabla v\in L^1([0,T];B^{\frac d {p_1}}_{p_1,\infty}), &\ \text{if}\ s<1+\frac d {p_1}, \\
			\nabla v\in L^1([0,T];B^{s-1}_{p,r}), &\ \text{if}\ s>1+\frac d {p_1}\ {\rm or}\ (s=1+\frac d {p_1}\ {\rm and}\ r=1).
		\end{array}
		$$
		Then the equation \eqref{transport} has a unique solution $f$ in \\
		$\bullet$ the space $C([0,T];B^s_{p,r})$, if $r<\infty$; \\
		$\bullet$ the space $\Big(\bigcap_{s'<s}C([0,T];B^{s'}_{p,\infty})\Big)\bigcap C_w([0,T];B^s_{p,\infty})$, if $r=\infty$.
	\end{lemm}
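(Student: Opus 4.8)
\emph{Proof proposal.} The plan is the classical regularization scheme for the linear transport equation, as in the corresponding result of \cite{BCD}. First I would smooth the data by frequency truncation, setting $v_n:=S_nv$, $f_0^n:=S_nf_0$ and $g_n:=S_ng$. The hypothesis $v\in L^\rho([0,T];B^{-M}_{\infty,\infty})$ ensures that, for a.e.\ $t$, $\|\Delta_jv(t)\|_{L^\infty}\lesssim 2^{jM}\|v(t)\|_{B^{-M}_{\infty,\infty}}$, so $v_n(t)$ is a bounded $C^\infty$ vector field; together with $\nabla v\in L^1([0,T];B^{d/p_1}_{p_1,\infty})$ this makes $v_n(t,\cdot)$ globally Lipschitz with Lipschitz constant in $L^1_t$, hence $v_n$ generates a $C^1$ flow $X_n$. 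The regularized equation $\partial_tf_n+v_n\cdot\nabla f_n=g_n$, $f_n|_{t=0}=f_0^n$, is then solved explicitly along the characteristics, and the solution $f_n$ is smooth in $x$; in particular $f_n$ belongs to $C([0,T];B^s_{p,r})$, so that Lemma~\ref{priori estimate} applies to it.

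Next I would extract uniform bounds. Since $S_n$ is bounded on every Besov space (Lemma~\ref{Besov}), the drift quantity $V_{p_1}$ associated with $\nabla v_n$ in Lemma~\ref{priori estimate} is dominated by the one associated with $\nabla v$, which is finite by hypothesis; applying Lemma~\ref{priori estimate} at regularity $s$ to $f_n$ gives
$$\|f_n(t)\|_{B^s_{p,r}}\le e^{CV_{p_1}(t)}\Big(\|f_0\|_{B^s_{p,r}}+\int_0^t e^{-CV_{p_1}(t')}\|g(t')\|_{B^s_{p,r}}\,\mathrm dt'\Big),$$
uniformly in $n$, so $(f_n)$ is bounded in $L^\infty([0,T];B^s_{p,r})$. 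For convergence, the difference $f_{n+m}-f_n$ solves \eqref{transport} with drift $v_{n+m}$, source $(g_{n+m}-g_n)+(v_n-v_{n+m})\cdot\nabla f_n$, and initial datum $f_0^{n+m}-f_0^n$; applying Lemma~\ref{priori estimate} at a regularity $\sigma$ slightly below $s$ but still admissible, i.e.\ $\sigma>-d\min(1/p_1,1/p')$ (here one checks both cases of that lemma, using $B^{s-1}_{p,r}\hookrightarrow B^{\sigma-1}_{p,r}$ when $s>1+d/p_1$), and using $S_nf_0\to f_0$, $S_ng\to g$, $S_nv\to v$ in the appropriate topologies together with the uniform bound on $f_n$, shows that $(f_n)$ is Cauchy in $C([0,T];B^{\sigma}_{p,r})$. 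Let $f$ be its limit. By the uniform bound and the Fatou property (Lemma~\ref{Besov}(7)) we get $f\in L^\infty([0,T];B^s_{p,r})$, and passing to the limit in the regularized equation shows that $f$ solves \eqref{transport}.

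The delicate step is the time regularity. From the equation $\partial_tf=g-v\cdot\nabla f\in L^1([0,T];B^{\sigma}_{p,r})$, so $f\in C([0,T];B^{\sigma}_{p,r})$; to upgrade to $B^s_{p,r}$ when $r<\infty$ I would localize in frequency: each $\Delta_jf$ is continuous in time (it solves, for fixed $j$, an ODE in $L^p$ involving the localized drift and a commutator $[\Delta_j,v\cdot\nabla]f$ controlled in $L^p$), while the high-frequency tail $\big(\sum_{j\ge N}(2^{js}\|\Delta_jf(t)\|_{L^p})^r\big)^{1/r}$ is small uniformly in $t$ by the a priori estimate applied to the tail --- this is precisely where $r<\infty$ enters --- whence $f\in C([0,T];B^s_{p,r})$. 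When $r=\infty$ the tail is not uniformly small, so only weak-$*$ continuity $f\in C_w([0,T];B^s_{p,\infty})$ survives; strong continuity in $B^{s'}_{p,\infty}$ for each $s'<s$ then follows by interpolating (Lemma~\ref{Besov}(5)) between this and the continuity at level $\sigma$. I expect this step --- the commutator bookkeeping for the frequency-localized pieces and the correct handling of the $r=\infty$ endpoint --- to be the main obstacle.

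Uniqueness is comparatively easy: two solutions $f^{(1)},f^{(2)}$ in the stated class have a difference solving \eqref{transport} with zero data and zero source, so Lemma~\ref{priori estimate} forces $f^{(1)}=f^{(2)}$; at the borderline exponents one replaces the bare Grönwall argument by the Osgood lemma (Lemma~\ref{Osgood}) combined with the logarithmic interpolation inequality Lemma~\ref{Besov}(6). Overall the analytic heart of the proof is the a priori estimate of Lemma~\ref{priori estimate}; the role of the hypothesis $v\in L^\rho([0,T];B^{-M}_{\infty,\infty})$ with $\rho>1$ is purely to make the approximating characteristics well defined and to allow the distributional passage to the limit, not to enter the quantitative estimates.
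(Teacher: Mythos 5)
The paper never proves this lemma at all: it is quoted directly from \cite{BCD} as a known tool, so there is no internal argument to compare yours against, and your sketch reproduces the standard regularization scheme of that reference (smooth the data and the velocity, solve along characteristics, uniform bounds from Lemma~\ref{priori estimate}, Cauchy property at a slightly lower regularity, Fatou plus a frequency-tail argument for time continuity when $r<\infty$, weak continuity at $r=\infty$) in essentially correct form. The one piece of over-engineering is the appeal to the Osgood lemma for uniqueness: since $s>-d\min(\frac{1}{p_1},\frac{1}{p'})$ strictly, the difference of two solutions is handled by Lemma~\ref{priori estimate} and Gr\"onwall alone, the Osgood refinement being needed only at the borderline regularity or for merely log-Lipschitz velocity fields.
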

	
	\begin{lemm}\label{continuous}\cite{continuous}
		Let $1\leq p\leq\infty, 1\leq r\leq\infty,s>1+\frac d p {\rm (} or\ s=1+\frac d p,r=1,1\leq p<\infty{\rm )}$. Denote $\bar{\mathbb{N}}=\mathbb{N}\cup\{\infty\}$. Let $\{v^n\}_{n\in\bar{\mathbb{N}}}\subset C([0,T];B^{s-1}_{p,r})$. Assume that $v^n$ is the solution to
			$$
			\left\{
			\begin{array}{l}
				
				\partial_t v^n+a^n\partial_{x}v^n=f\\
				v^n|_{t=0}=v_0
				
			\end{array}
			\right.
			$$
		with $v_0\in B^{s-1}_{p,r},\ f\in L^1([0,T];B^{s-1}_{p,r})$ and for some $\alpha\in L^1([0,T])$
		$$\sup_{n\in\bar{\mathbb{N}}}\|a^n\|_{B^s_{p,r}}\leq \alpha(t)$$
		If $a^n\rightarrow a^\infty$ in $L^1([0,T];B^{s-1}_{p,r})$ as $n$ tends to $\infty$, then $v^n\rightarrow v^\infty$ in $C([0,T];B^{s-1}_{p,r})$ as $n$ tends to $\infty$.
	\end{lemm}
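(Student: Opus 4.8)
\emph{Proof plan.} Set $w^n:=v^n-v^\infty$; by linearity it solves the linear transport equation
$$\partial_t w^n+a^n\partial_x w^n=(a^\infty-a^n)\,\partial_x v^\infty,\qquad w^n|_{t=0}=0.$$
The obstruction to estimating this directly in $B^{s-1}_{p,r}$ is that the source term loses one derivative on $v^\infty$ (it lies a priori only in $B^{s-2}_{p,r}$), so the a priori estimate of Lemma~\ref{priori estimate} at regularity $s-1$ does not close. The plan is to regularise the data $v_0$ and the forcing $f$, reduce to a comparison between two \emph{smooth} solutions whose source term then gains back the lost derivative, and control the regularisation error uniformly in $n$.

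First, a uniform bound: applying Lemma~\ref{priori estimate} to the equation for $v^n$ at regularity $s-1$ and using that $\sup_{n\in\bar{\mathbb{N}}}\|a^n\|_{B^s_{p,r}}\le\alpha(t)$ dominates the drift term $V'_p$ (indeed $\nabla a^n$ is bounded in $B^{s-1}_{p,r}$, which embeds into $B^{d/p}_{p,\infty}\cap L^\infty$ when $s-1<1+d/p$ and into $B^{s-2}_{p,r}$ otherwise), one obtains $\sup_{n\in\bar{\mathbb{N}}}\|v^n\|_{L^\infty([0,T];B^{s-1}_{p,r})}\le M<\infty$. Next, for $N\in\mathbb{N}$ let $v^n_N$ be the unique solution of $\partial_tv^n_N+a^n\partial_xv^n_N=S_Nf$ with $v^n_N|_{t=0}=S_Nv_0$; since $S_Nv_0\in B^s_{p,r}$ and $S_Nf\in L^1([0,T];B^s_{p,r})$ for each fixed $N$, Lemmas~\ref{existence} and \ref{priori estimate} give existence, uniqueness and the bound $\sup_{n\in\bar{\mathbb{N}}}\|v^n_N\|_{L^\infty([0,T];B^s_{p,r})}\le C_N<\infty$. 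In particular $\partial_xv^n_N$ is bounded in $L^\infty([0,T];B^{s-1}_{p,r})$ uniformly in $n$ (for each fixed $N$).

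Decompose $v^n-v^\infty=(v^n-v^n_N)-(v^\infty-v^\infty_N)+(v^n_N-v^\infty_N)$. The first two differences solve transport equations with drift $a^n$, resp.\ $a^\infty$, forcing $f-S_Nf$ and data $v_0-S_Nv_0$; Lemma~\ref{priori estimate} bounds each by $C\bigl(\|v_0-S_Nv_0\|_{B^{s-1}_{p,r}}+\|f-S_Nf\|_{L^1([0,T];B^{s-1}_{p,r})}\bigr)$, which tends to $0$ as $N\to\infty$, uniformly in $n$, whenever $r<\infty$ (Lemma~\ref{Besov}(2) together with dominated convergence). The last difference solves $\partial_t(v^n_N-v^\infty_N)+a^n\partial_x(v^n_N-v^\infty_N)=(a^\infty-a^n)\partial_xv^\infty_N$ with zero data; since $s-1>d/p$ (or $s-1=d/p$, $r=1$), $L^\infty\cap B^{s-1}_{p,r}$ is an algebra (Lemma~\ref{product}(1)), hence the source is $\lesssim\|a^\infty-a^n\|_{B^{s-1}_{p,r}}\|v^\infty_N\|_{B^s_{p,r}}\le C_N\|a^\infty-a^n\|_{B^{s-1}_{p,r}}$, and Lemma~\ref{priori estimate} yields $\|v^n_N-v^\infty_N\|_{L^\infty([0,T];B^{s-1}_{p,r})}\lesssim C_N\int_0^T\|a^\infty-a^n\|_{B^{s-1}_{p,r}}\,dt\to0$ as $n\to\infty$, for each fixed $N$. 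An $\varepsilon/3$ argument (first choose $N$, then $n$) now gives $v^n\to v^\infty$ in $C([0,T];B^{s-1}_{p,r})$ when $r<\infty$.

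The case $r=\infty$ is the main obstacle, since then $S_Nv_0$ does not converge to $v_0$ in $B^{s-1}_{p,\infty}$ and the high-frequency tails are not equicontinuous, so the regularisation error above is not uniformly small. One circumvents this by first running the whole scheme at every regularity $s'<s-1$ — using in addition that $a^n\to a^\infty$ in $B^{s'}_{p,\infty}$ for all $s'<s$ by interpolation (Lemma~\ref{Besov}(5)) — to get $v^n\to v^\infty$ in $C([0,T];B^{s'}_{p,\infty})$ for every $s'<s-1$, and then upgrading to $s'=s-1$ by combining the uniform bound with the logarithmic interpolation inequality (Lemma~\ref{Besov}(6)) and the Osgood lemma (Lemma~\ref{Osgood}). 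Finally, one checks in each sub-case — in particular the borderline case $s=1+d/p$, $r=1$ — that the regularity index meets the thresholds $d/p$ and $1+d/p$ required by Lemmas~\ref{priori estimate}, \ref{existence} and \ref{product}; this is routine.
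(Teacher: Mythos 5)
First, a point of reference: the paper does not prove this lemma at all — it is quoted from \cite{continuous} — so your proposal can only be measured against the standard argument in the literature and against how the lemma is actually used here. For $r<\infty$ your plan is essentially that standard argument (a Bona--Smith-type regularisation of the data and forcing, solving with the \emph{same} drift $a^n$, estimating the source $(a^\infty-a^n)\partial_x v^\infty_N$ by the algebra property of $B^{s-1}_{p,r}$, and closing with an $\varepsilon/3$ argument), and your index bookkeeping against Lemma~\ref{priori estimate} and Lemma~\ref{product}, including the endpoint $s=1+\frac dp$, $r=1$, is correct. Two minor remarks: to build $v^n_N$ via Lemma~\ref{existence} you should say a word about its technical hypothesis $a^n\in L^\rho([0,T];B^{-M}_{\infty,\infty})$, $\rho>1$, which does not follow from $\alpha\in L^1$ alone; and note that in this paper the lemma is only ever invoked with $r<\infty$ (Step 4, case (1), of Theorem~\ref{well posedness}), the case $r=\infty$ of continuous dependence being handled there by a separate weak-convergence argument.

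The genuine gap is your case $r=\infty$. The proposed upgrade — convergence in every $B^{s'}_{p,\infty}$ with $s'<s-1$ plus the uniform $B^{s-1}_{p,\infty}$ bound, fed into the logarithmic interpolation of Lemma~\ref{Besov}(6) and Lemma~\ref{Osgood} — cannot work as described. Lemma~\ref{Besov}(6) trades an $\ell^\infty$ bound for an $\ell^1$ bound at the \emph{same} regularity at the price of a norm at the strictly higher regularity $s-1+\varepsilon$, which is not available here (the data are only in $B^{s-1}_{p,\infty}$); and, more fundamentally, boundedness at the top index together with convergence at all lower indices does not imply convergence in an $\ell^\infty$-based Besov norm: mass can persist on blocks $\Delta_{j_n}$ with $j_n\to\infty$ and $2^{j_n(s-1)}\|\Delta_{j_n}(v^n-v^\infty)\|_{L^p}\simeq 1$ while every norm of index $s'<s-1$ tends to zero. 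Observe also that any correct proof for $r=\infty$ must actually use the standing hypothesis $\{v^n\}_{n\in\bar{\mathbb{N}}}\subset C([0,T];B^{s-1}_{p,\infty})$ (strong continuity in time), which your sketch never touches: dropping it, the conclusion is false — take $d=1$, $p=\infty$, $f=0$, constant drifts $a^n\equiv\varepsilon_n\to 0$, and a lacunary (Weierstrass-type) datum $v_0\in B^{s-1}_{\infty,\infty}$; then $v^n(t)=v_0(\cdot-\varepsilon_n t)$, and a block-by-block computation gives $\|v^n(t)-v^\infty(t)\|_{B^{s-1}_{\infty,\infty}}\gtrsim 1$ whenever $\varepsilon_n t\neq 0$ is small, the hypotheses being violated only through the failure of strong time-continuity of $v^n$. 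So either restrict your proof to $r<\infty$ — which is all this paper needs — or replace the last paragraph by an argument that genuinely exploits the assumed strong continuity of the $v^n$ in $B^{s-1}_{p,\infty}$.
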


\section{Local well-posedness}
In this section, we will establish the local well-posedness for the Cauchy problem \eqref{Npeakon} or \eqref{Npeakon2} in Besov spaces. We want to solve the Cauchy problem \eqref{Npeakon2} directly by general Picard scheme and transport equation theory, but we will fail to do that due to the loss of regularity of the transport term. By Prop \ref{multiplier}, we can overcome the difficulty by solving the Cauchy problem \eqref{Npeakon}.

Firstly, we give a definition as follows
\begin{defi}
	Let $T>0,s\in\mathbb{R}$ and $1\leq p,r\leq\infty$. Denote
	$$E^s_{p,r}(T)\triangleq\left\{
	\begin{array}{ll}
		C([0,T];B^s_{p,r})\cap C^1([0,T];B^{s-1}_{p,r})\quad &if\ r<\infty\\
		C_w([0,T];B^s_{p,r})\cap C^{0,1}([0,T];B^{s-1}_{p,r})\quad &if\ r=\infty
	\end{array}\right.$$
	$$E^s_{p,r}(T_{-})\triangleq\left\{
	\begin{array}{ll}
		C([0,T);B^s_{p,r})\cap C^1([0,T);B^{s-1}_{p,r})\quad &if\ r<\infty\\
		C_w([0,T);B^s_{p,r})\cap C^{0,1}([0,T);B^{s-1}_{p,r})\quad &if\ r=\infty
	\end{array}\right.$$
\end{defi}
\begin{theo}\label{well posedness}
	Let $1\leq p,r\leq\infty,s\in\mathbb{R}$ and $(s,p,r)$ satisfies the condition
	\begin{equation}\label{index}
		\{s\geq\frac 1 2, p>2, 1\leq r\leq\infty\}\ or\ \{s>\frac 1 p, 1\leq p\leq2, 1\leq r\leq\infty\}\ or\ \{s=\frac 1 p, 1\leq p\leq2, r=1\}
	\end{equation}
	Let $n_0\triangleq4\beta_0^2v_0-v_{0xx}$ with $\beta_0\neq0$ be in $B^s_{p,r}$. Then \eqref{Npeakon} has a unique maximal solution $n$ in $E^s_{p,r}(T^*_-)$ and $n$ depends continuously on the initial data $n_0$. 
	
	Moreover, there exists a positive constant $C$, depending only on $s,p,r,\beta_0\ s.t$
	$$T^*\geq\frac C {\|n_0\|_{B^s_{p,r}}}$$
\end{theo}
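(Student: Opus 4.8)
The plan is to follow the classical Bona--Smith / Danchin scheme for quasilinear transport equations, working with the $n$-equation \eqref{Npeakon} rather than the $v$-equation \eqref{Npeakon2} since, by Proposition \ref{multiplier}, $v = (4\beta_0^2-\partial_x^2)^{-1}n = P_2(D)n$ gains two derivatives and $v_x = P_1(D)n$ gains one, so all the source terms become bounded maps on $B^s_{p,r}$. Rewriting \eqref{Npeakon} in the form $n_t + A(n)\,n_x = F(n)$ with transport speed $A(n) = -4(v_x+2\beta_0 v) = -4(P_1(D)+2\beta_0 P_2(D))n$ (which lies in $B^{s+1}_{p,r}$, hence is Lipschitz since $s+1 > 1+\tfrac1p$ under \eqref{index}) and $F(n)$ a sum of products of $n$, $v$, $v_x$ (all at regularity $\geq s$, so $F(n)\in B^s_{p,r}$ by the product law Lemma \ref{product}(1)), we set up the iterates: $n^0 := S_0 n_0$ and $n^{j+1}$ the solution of the linear transport problem $\partial_t n^{j+1} + A(n^j)\partial_x n^{j+1} = F(n^j)$, $n^{j+1}|_{t=0}=n_0$, whose solvability in $E^s_{p,r}(T)$ is furnished by Lemma \ref{existence}.

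The core estimates are three. First, \emph{uniform bounds}: using Lemma \ref{priori estimate} (the case $f=v$, giving $V'_{p_1}(t) = \|\partial_x A(n^j)\|_{L^\infty} \lesssim \|n^j\|_{B^s_{p,r}}$ since $B^s_{p,r}\hookrightarrow L^\infty$ under \eqref{index}) together with $\|F(n^j)\|_{B^s_{p,r}}\lesssim \|n^j\|_{B^s_{p,r}}^2$, a Gronwall argument shows that for $T$ chosen so that $4C^2\|n_0\|_{B^s_{p,r}}T \leq 1$ (roughly), the sequence $(n^j)$ is bounded in $C([0,T];B^s_{p,r})$, say by $2C\|n_0\|_{B^s_{p,r}}$; this simultaneously yields the lower bound $T^*\gtrsim 1/\|n_0\|_{B^s_{p,r}}$. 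Second, \emph{contraction}: one estimates $\delta^j := n^{j+1}-n^j$, which solves a transport equation with speed $A(n^j)$ and right-hand side $(A(n^{j-1})-A(n^j))\partial_x n^j + (F(n^j)-F(n^{j-1}))$; here one works at the lower level $B^{s-1}_{p,r}$ to avoid the derivative loss in the $(A(n^{j-1})-A(n^j))\partial_x n^j$ term — note $A(n^{j-1})-A(n^j)\in B^s_{p,r}$ costs one derivative from $\partial_x n^j\in B^{s-1}_{p,r}$, landing in $B^{s-1}_{p,r}$ by Lemma \ref{product} — and concludes $(n^j)$ is Cauchy in $C([0,T];B^{s-1}_{p,r})$. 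Third, \emph{convergence and regularity}: the $B^{s-1}$-limit $n$ combined with the uniform $B^s$-bound (Fatou, Lemma \ref{Besov}(7)) gives $n\in L^\infty([0,T];B^s_{p,r})$; that $n$ solves \eqref{Npeakon} follows by passing to the limit, and time-continuity $n\in C([0,T];B^s_{p,r})$ (or $C_w$ when $r=\infty$) comes from Lemma \ref{existence} applied to the limiting transport equation $\partial_t n + A(n)\partial_x n = F(n)$ now viewed as linear in $n$ with known coefficient $A(n)$, plus the continuity-in-time argument for the $C^1$ part from the equation itself.

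Uniqueness at the level $B^s_{p,r}$ is proved by the same $\delta = n - \tilde n$ transport estimate at regularity $B^{s-1}_{p,r}$ combined with an Osgood-type argument (Lemma \ref{Osgood}) when $s$ is at the critical threshold, giving $\delta\equiv 0$; the maximal time $T^*$ and the blow-up alternative follow by the standard continuation argument. For \emph{continuous dependence} on $n_0$, I would invoke Lemma \ref{continuous}: given $n_0^k\to n_0$ in $B^s_{p,r}$ with solutions $n^k$, the speeds $a^k := A(n^k)$ are uniformly bounded in $B^{(s-1)+1}_{p,r} = B^s_{p,r}$ on a common time interval (by the uniform bound above) and $a^k\to A(n)$ in $C([0,T];B^{s-1}_{p,r})$ once one first establishes $n^k\to n$ in the lower norm by the contraction-type estimate, so Lemma \ref{continuous} upgrades this to convergence in $C([0,T];B^{s-1}_{p,r})$; the jump to $C([0,T];B^s_{p,r})$ then uses the Bona--Smith trick of comparing $n^k$ and $n$ against their mollifications $S_j$, exploiting interpolation (Lemma \ref{Besov}(5)) and the uniform bounds. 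I expect the \textbf{main obstacle} to be precisely this last step — continuous dependence in the strong topology $B^s_{p,r}$, especially in the critical cases $s=\tfrac1p$ ($1\le p\le2$, $r=1$) and $s=\tfrac12$ ($p>2$) — where the naive energy estimate only closes with a loss of one derivative and one must carefully combine the Bona--Smith regularization with Lemmas \ref{refine priori} and \ref{continuous} to recover the full regularity without loss.
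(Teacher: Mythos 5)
Your scheme matches the paper's for the core of the proof: the same Picard iteration on the linear transport equation for $n$ (with velocity $-4(v_x+2\beta_0 v)$ smoothed by $P_1(D),P_2(D)$), the same a priori estimates plus Gronwall giving uniform bounds on a time $T\sim \|n_0\|_{B^s_{p,r}}^{-1}$, convergence of the iterates one derivative lower, Fatou's property, passage to the limit, Lemma \ref{existence} for time continuity, and uniqueness via $B^{s-1}$-type estimates. Two points differ. First, in the critical case $s=\frac1p$, $1\le p\le 2$, $r=1$, a plain contraction in $B^{s-1}_{p,1}$ is not available: the index $s-1=-\min(\frac1p,\frac1{p'})$ is the borderline of Lemma \ref{priori estimate} and the endpoint Bony estimates only land in $B^{\frac1p-1}_{p,\infty}$, so the paper proves the Cauchy property of the iterates in $C([0,T];B^{\frac1p-1}_{p,\infty})$ via the logarithmic interpolation of Lemma \ref{Besov}(6) and the Osgood lemma; i.e.\ the Osgood argument is needed already for existence, not only for uniqueness as in your outline. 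Second, for continuous dependence you propose the Bona--Smith mollification route, whereas the paper avoids it entirely: since the velocity $A(n)=-4(v_x+2\beta_0 v)$ lies in $B^{s+1}_{p,r}$ with $\|A(n)\|_{B^{s+1}_{p,r}}\leq C\|n\|_{B^s_{p,r}}$, Lemma \ref{continuous} can be applied with its indices shifted by one (its ``$s$'' taken to be $s+1$), so after splitting $n^k=y^k+z^k$ (where $y^k$ transports the limiting source $f^\infty$ with data $n_0^k$, and $z^k$ carries $f^k-f^\infty$ with data $n_0^k-n_0^\infty$) one gets $y^k\to n^\infty$ in $C([0,T];B^s_{p,r})$ directly from Lemma \ref{continuous} and controls $z^k$ by Lemma \ref{priori estimate} and Gronwall, with the $r=\infty$ case handled by a duality/weak-continuity argument consistent with $E^s_{p,\infty}$ being defined through $C_w$. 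Your Bona--Smith plan is the classical alternative and should be workable, but it is heaviest exactly where you anticipate trouble (the critical indices), while the extra derivative gained through $(4\beta_0^2-\partial_x^2)^{-1}$ is what makes the paper's shorter argument possible.
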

\begin{proof}\
	Now  we prove Theorem \ref{well posedness} as the following steps.
	
	\noindent\textbf{Step 1: Constrcuting Approxiamte Solitions and Uniform Bounds.}
	
	Denoting $v^0=0$, we define a sequence $(n^k)_{k\in\mathbb{N}}$ of smooth functions by solving the following linear transport equation:
		$$
		(T_k)	\left\{
			\begin{array}{ll}
				n_t^{k+1}-4(v_x^k+2\beta_0v^k)n_x^{k+1}=16\beta_0^2v^kn^k-4(n^k)^2+8\beta_0v_x^kn^k\\
				v^{k+1}=(4\beta_0^2-\partial_x^2)^{-1}n^{k+1}\\
				v^{k+1}|_{t=0}=S_{k+1}v_0
			\end{array}
			\right.
		$$
	By induction, we assume that $(n^k)_{k\in\mathbb{N}},\forall T>0$. Since $(4\beta_0^2-\partial_x^2)^{-1}$ is an $S^{-2}$-multiplier and 
	$$v^{k+1}=(4\beta_0^2-\partial_x^2)^{-1}n^{k+1}=\frac 1{4|\beta_0|}e^{-|2\beta_0\cdot|\ast n^{k+1}}$$
	also noticing that $B^s_{p,r}$ is an algebra when $(s,p,r)$ satisfies the condition \eqref{index} and the embedding $B^s_{p,r}\hookrightarrow B^{s-1}_{p,r}\hookrightarrow B^{s-2}_{p,r}$, we obtain
	$$\|16\beta_0^2v^kn^k-4(n^k)^2+8\beta_0v_x^kn^k\|_{B^s_{p,r}}\leq C\|n^k\|^2_{B^s_{p,r}}$$
	At the same time $\|S_{k+1}n_0\|_{B^s_{p,r}}\leq C\|n_0\|_{B^s_{p,r}}$, and denote that
	$$\int_0^t\|4(v_x^k+2\beta_0v^k)_x(\tau)\|_{B^s_{p,r}}{\rm d}\tau\leq CN^k(t)\triangleq\int_0^t\|n^k(\tau)\|_{B^s_{p,r}}{\rm d}\tau$$
	Applying Thm\ref{priori estimate}, we have
	\begin{equation}\label{bound1}
		\|n^{k+1}(t)\|_{B^s_{p,r}}\leq e^{CN^k(t)}(\|n_0\|_{B^s_{p,r}}+C\int_0^t e^{-CN^k(\tau)}\|n^k(\tau)\|_{B^s_{p,r}}^2{\rm d}\tau)
	\end{equation}
	
	We may assume $C\geq 1$ and fix a $T>0\ s.t.$
	\begin{equation}
		2C^2\|n_0\|_{B^s_{p,r}}T<1
	\end{equation}
	Suppose that $\forall t\in[0,T]$, $n^k$ satisfies
	\begin{equation}\label{bound2}
		\|n^k(t)\|_{B^s_{p,r}}\leq\frac{C\|n_0\|_{B^s_{p,r}}}{1-2C^2\|n_0\|_{B^s_{p,r}}t}
	\end{equation}
	Note that
	$$e^{CN^k(t_2)-CN^k(t1)}=e^{C\int_{t_1}^{t_2}\|v^k(\tau)\|_{B^s_{p,r}}{\rm d}\tau}\leq e^{\int_{t_1}^{t_2}\frac{C^2\|n_0\|_{B^s_{p,r}}}{1-2C^2\tau\|n_0\|_{B^s_{p,r}}}{\rm d}\tau}=\left(\frac{1-2C^2\|n_0\|_{B^s_{p,r}}t_1}{1-2C^2\|n_0\|_{B^s_{p,r}}t_2}\right)^{1/2}$$
	Plugging \eqref{bound2} into \eqref{bound1} yields
	\begin{equation}
		\begin{array}{ll}
			&\|n^{k+1}(t)\|_{B^s_{p,r}}\\
			&\leq(1-2C^2\|n_0\|_{B^s_{p,r}}t)^{1/2}\left(C\|n_0\|_{B^s_{p,r}}+\int_0^t C(1-2C^2\|n_0\|_{B^s_{p,r}}\tau)^{1/2}\frac{C^2\|n_0\|_{B^s_{p,r}}}{(1-2C^2\|n_0\|_{B^s_{p,r}}\tau)^2}{\rm d}\tau\right)\\
			&\leq\frac{C\|n_0\|_{B^s_{p,r}}}{1-2C^2\|n_0\|_{B^s_{p,r}}t}
		\end{array}
	\end{equation}
	Therefore, $(n^k)_{k\in\mathbb{N}}$ is uniformly bounded in $C([0,T];B^s_{p,r})$. According to the system $(T_k)$ and Lemma\ref{priori estimate} and Lemma\ref{refine priori}, we can deduce that $\{n^k\}_{k\in\mathbb{N}}$ is uniformly bounded in $E^s_{p,r}(T)$.

	\noindent\textbf{Step 2: Convergence and Regularity.}
	
	We will claim that $\{n^k\}_{k\in\mathbb{N}}$ is a Cauchy sequence in $C([0,T];B^{s-1}_{p,r})$, if $\{s\geq\frac 1 2, p>2, 1\leq r\leq\infty\}$ or $\{s>\frac 1 p, 1\leq p\leq2, 1\leq r\leq\infty\}$; or of $C([0,T];B^{\frac 1 p-1}_{p,\infty})$ if $\{1\leq p\leq 2,s=\frac 1 p,r=1\}$.
	
	Indeed, for all $k,p\in\mathbb{R}$, taking the difference between the system $(T_{k+p})$ and ${T_k}$, and denote $w^{k,p}=n^{k+p}-n^{k}$, we have
	\begin{equation}\label{difference}
		\left\{
		\begin{array}{ll}
			(\partial_t-4(v_x^{k+p}+2\beta_0v^{k+p}))\partial_xw^{k+1,p}\\
			=-4(v_x^{k+p}-v_x^k)n_x^{k+1}-8\beta_0(v^{k+p}-v^k)n_x^{k+1}-4(n^{k+p}-n^k)(n^{k+p}+n^k)\\
			\ \ \  -16\beta_0^2\left((v^{k+p}-v^k)n^{k+p}+v^k(n^{k+p}-n^k)\right)+8\beta\left((v_x^{k+p}-v_x^k)n_x^{k+p}+(n^{k+p}-n^k)v_x^k\right)\\
			w^{k+1,p}|_{t=0}=S_{k+p+1}n_0-S_{k+1}n_0
		\end{array}
		\right.
	\end{equation}
	\noindent \textbf{Case1:} $s\geq\frac 1 2, p>2, 1\leq r\leq\infty$ or $s>\frac 1 p, 1\leq p\leq2, 1\leq r\leq\infty$
	
	First, estimate the left-hand side of system \eqref{difference} in $B^{s-1}_{p,r}$. In fact, by Lemma\ref{Bony} and Lemma\ref{multiplier}, we can see
	$$
	\begin{aligned}
		\|(v_x^{k+p}-v_x^k)n_x^{k+1}\|_{B^{s-1}_{p,r}}
		\leq& C \|v_x^{k+p}-v_x^{k}\|_{L^\infty}\|n_x^{k+1}\|_{B^{s-1}_{p,r}}+C\|v_x^{k+p}-v_x^{k}\|_{B^s_{p,r}}\|n_x^{k+1}\|_{B^{-1}_{\infty,\infty}}\\
		&+C\|v_x^{k+p}-v_x^{k}\|_{B^s_{p,r}}\|n_x^{k+1}\|_{B^{\frac 1 p-1}_{p,r}}
		\leq C\|n^{k+p}-n^k\|_{B^{s-1}_{p,r}}\|n^{k+1}\|_{B^s_{p,r}}
	\end{aligned}$$
	$$
	\begin{aligned}
		\|(v^{k+p}-v^k)n_x^{k+1}\|_{B^{s-1}_{p,r}}
		\leq& C \|v^{k+p}-v^{k}\|_{L^\infty}\|n_x^{k+1}\|_{B^{s-1}_{p,r}}+C\|v^{k+p}-v^{k}\|_{B^s_{p,r}}\|n_x^{k+1}\|_{B^{-1}_{\infty,\infty}}\\
		&+C\|v^{k+p}-v^{k}\|_{B^s_{p,r}}\|n_x^{k+1}\|_{B^{\frac 1 p-1}_{p,r}}
		\leq C\|n^{k+p}-n^k\|_{B^{s-1}_{p,r}}\|n^{k+1}\|_{B^s_{p,r}}
	\end{aligned}$$
	$$
	\begin{aligned}
		\|(n^{k+p}-n^k)(n^{k+p}+n^k)\|_{B^{s-1}_{p,r}}
		\leq& C \|n^{k+p}+n^{k}\|_{L^\infty}\|n^{k+p}-n^{k}\|_{B^{s-1}_{p,r}}+C\|n^{k+p}+n^{k}\|_{B^s_{p,r}}\|n^{k+p}-n^k\|_{B^{-1}_{\infty,\infty}}\\
		&+C\|n^{k+p}+n^{k}\|_{B^s_{p,r}}\|n^{k+p}-n^k\|_{B^{\frac 1 p-1}_{p,r}}\\
		\leq& C\|n^{k+p}-n^k\|_{B^{s-1}_{p,r}}(\|n^{k+p}\|_{B^s_{p,r}}+\|n^{k}\|_{B^s_{p,r}})
	\end{aligned}$$
	$$
	\begin{aligned}
		\|(v^{k+p}-v^k)n^{k+p}\|_{B^{s-1}_{p,r}}
		\leq& C \|v^{k+p}-v^{k}\|_{L^\infty}\|n^{k+p}\|_{B^{s-1}_{p,r}}+C\|v^{k+p}-v^{k}\|_{B^s_{p,r}}\|n^{k+p}\|_{B^{-1}_{\infty,\infty}}\\
		&+C\|v^{k+p}-v^{k}\|_{B^s_{p,r}}\|n^{k+p}\|_{B^{\frac 1 p-1}_{p,r}}
		\leq C\|n^{k+p}-n^k\|_{B^{s-1}_{p,r}}\|n^{k}\|_{B^s_{p,r}}
	\end{aligned}$$
	$$
	\begin{aligned}
		\|v^k(n^{k+p}-n^k)\|_{B^{s-1}_{p,r}}
		\leq& C \|v^{k}\|_{L^\infty}\|n^{k+p}-n^{k}\|_{B^{s-1}_{p,r}}+C\|v^{k}\|_{B^s_{p,r}}\|n^{k+p}-n^k\|_{B^{-1}_{\infty,\infty}}\\
		&+C\|v^{k}\|_{B^s_{p,r}}\|n^{k+p}-n^k\|_{B^{\frac 1 p-1}_{p,r}}
		\leq C\|n^{k+p}-n^k\|_{B^{s-1}_{p,r}}\|n^{k}\|_{B^s_{p,r}}
	\end{aligned}$$
	$$
	\begin{aligned}
		\|(v_x^{k+p}-v_x^k)n^{k+p}\|_{B^{s-1}_{p,r}}
		\leq& C \|v_x^{k+p}-v_x^{k}\|_{B^s_{p,r}}\|n^{k+p}\|_{B^{s}_{p,r}}
		\leq C\|n^{k+p}-n^k\|_{B^{s-1}_{p,r}}\|n^{k+p}\|_{B^s_{p,r}}
	\end{aligned}$$
	$$
	\begin{aligned}
		\|(n^{k+p}-n^k)v_x^k\|_{B^{s-1}_{p,r}}
		\leq& C \|v_x^k\|_{L^\infty}\|n^{k+p}-n^{k}\|_{B^{s-1}_{p,r}}+C\|v_x^k\|_{B^{s}_{p,r}}\|n^{k+p}-n^k\|_{B^{-1}_{\infty,\infty}}\\
		&+C\|v_x^k\|_{B^s_{p,r}}\|n^{k+p}-n^k\|_{B^{\frac 1 p-1}_{p,r}}
		\leq C\|n^{k+p}-n^k\|_{B^{s-1}_{p,r}}\|n^{k}\|_{B^s_{p,r}}
	\end{aligned}$$
	Also we can obtain 
	$$\begin{aligned}
		\|S_{k+p+1}n_0-S_{k+1}n_0\|_{B^{s-1}_{p,r}}\leq C\left(\sum_{l=k}^{k+p}2^{-lr}2^{lsr}\|\Delta_ln_0\|_{L^p}^r\right)^{\frac 1 r}\longrightarrow 0(k\rightarrow\infty)
	\end{aligned}$$
	
	Now, recalling \eqref{bound2} and the uniform bound estimates of $\|n^k\|_{B^s_{p,r}}$, and making use of Lemma\ref{priori estimate} or Lemma\ref{refine priori} to the system \eqref{difference}, we have
	$$\|w^{k+1,p}\|_{B^{s-1}_{p,r}}\leq C_T(2^{-k}+\int_0^t\|w^{k,p}(\tau)\|_{B^{s-1}_{p,r}}{\rm d}\tau)$$
	where $C_T=C(s,p,r,T,\beta_0,\|v_0\|_{B^s_{p,r}})$ is a constant, then by induction as $k$ tends to $\infty$
	$$\begin{aligned}
		\|w^{k+1,p}\|_{B^{s-1}_{p,r}}&\leq \sum_{l=0}^p\|w^{k+l+1,1}\|_{B^{s-1}_{p,r}}\leq\sum_{l=0}^pC_T(2^{-(k+l)}+\int_0^tC_T(2^{-(k+l-1)}+\int_0^{t_1}\|w^{k+l,1}\|_{B^{s-1}_{p,r}}{\rm d}t_2){\rm d}t_1)\\
		&\leq\sum_{l=0}^pC_t(2^{-(k+l)}\sum_{m=0}^{k+l}\frac{(2TC_T)^m}{m!}+(C_T)^{k+l+1}\int_0^t\frac{(t-\tau)^{k+l+1}}{(k+l+1)!}{\rm d}\tau)\\
		&\leq\sum_{l=0}^p\left(C_T2^{-(k+l)}e^{2TC_T}+C_T\frac{(TC_T)^{k+l+1}}{(k+l+1)!}\right)\leq C_T2^{-k+1}e^{2TC_T}+\sum_{m=k+1}^{k+p+1}C_T\frac{(TC_T)^m}{m!}\longrightarrow0
	\end{aligned}$$
	which means $(n^k)_{k\in\mathbb{N}}\subset E^s_{p,r}(T)$, and by Fatou's property, we obtain that $n\in L^\infty([0,T];B^s_{p,r})$.
	
	Now for any test function $\phi\in C([0,T];\mathcal{S})$ in the system $(T_k)$, applying Prop\ref{dual}, and taking the limits, it is not hard to check that $n$ is a solution to the system \eqref{Npeakon}.
	
	According to Lemma\ref{existence}, $n\in C([0,T];B^s_{p,r})$, and by system\eqref{Npeakon}, we can get that $n_t\in C([0,T];B^{s-1}_{p,r})$, thus $n\in E^s_{p,r}(T)$.
	
	\noindent \textbf{Case2:} $s=1/p,1\leq p\leq 2$ and $r=1$
	
	Since the critical index $s-1=-(1-\frac 1 p)=-\min(\frac{1 p},\frac{1{p^\prime}})$, we can only apply Lemma\ref{priori estimate} in the stage space $B^{\frac 1 p-1}_{p,\infty}$. And noticing that $B^{\frac 1 p}_{p,1}\hookrightarrow B^{\frac 1 p-1}_{p,1}\hookrightarrow B^{\frac 1 p-1}_{p,\infty}$, and applying Lemma\ref{Bony} to the system \eqref{difference}
	
	$$
	\begin{aligned}
		\|(v_x^{k+p}-v_x^k)n_x^{k+1}\|_{B^{\frac1 p -1}_{p,\infty}}
		\leq& C \|v_x^{k+p}-v_x^{k}\|_{L^\infty}\|n_x^{k+1}\|_{B^{\frac1 p -1}_{p,\infty}}+C\|v_x^{k+p}-v_x^{k}\|_{B^{\frac 1 p}_{p,\infty}}\|n_x^{k+1}\|_{B^{-1}_{\infty,\infty}}\\
		&+C\|v_x^{k+p}-v_x^{k}\|_{B^{\frac 1 p}_{p,1}}\|n_x^{k+1}\|_{B^{-\frac 1 p}_{p^\prime,\infty}}
		\leq C\|n^{k+p}-n^k\|_{B^{\frac1 p -1}_{p,1}}\|n^{k+1}\|_{B^{\frac 1 p}_{p,1}}
	\end{aligned}$$
	$$
	\begin{aligned}
		\|(v^{k+p}-v^k)n_x^{k+1}\|_{B^{\frac1 p -1}_{p,\infty}}
		\leq& C \|v^{k+p}-v^{k}\|_{L^\infty}\|n_x^{k+1}\|_{B^{\frac 1 p -1}_{p,\infty}}+C\|v^{k+p}-v^{k}\|_{B^{\frac 1 p}_{p,\infty}}\|n_x^{k+1}\|_{B^{-1}_{\infty,\infty}}\\
		&+C\|v^{k+p}-v^{k}\|_{B^{\frac 1 p}_{p,1}}\|n_x^{k+1}\|_{B^{-\frac 1 p}_{p^\prime,\infty}}
		\leq C\|n^{k+p}-n^k\|_{B^{\frac 1 p -1}_{p,1}}\|n^{k+1}\|_{B^{\frac 1 p}_{p,1}}
	\end{aligned}$$
	$$
	\begin{aligned}
		\|(n^{k+p}-n^k)(n^{k+p}+n^k)\|_{B^{\frac1 p -1}_{p,\infty}}
		\leq& C \|n^{k+p}+n^{k}\|_{L^\infty}\|n^{k+p}-n^{k}\|_{B^{\frac 1 p}_{p,\infty}}+C\|n^{k+p}+n^{k}\|_{B^{\frac 1 p -1}_{p,\infty}}\|n^{k+p}-n^k\|_{B^{-1}_{\infty,\infty}}\\
		&+C\|n^{k+p}+n^{k}\|_{B^{-\frac 1 p +1}_{p^\prime,\infty}}\|n^{k+p}-n^k\|_{B^{\frac 1 p-1}_{p,1}}\\
		\leq& C\|n^{k+p}-n^k\|_{B^{\frac 1 p -1}_{p,1}}(\|n^{k+p}\|_{B^{\frac 1 p}_{p,1}}+\|n^{k}\|_{B^{\frac 1 p}_{p,1}})
	\end{aligned}$$
	$$
	\begin{aligned}
		\|(v^{k+p}-v^k)n^{k+p}\|_{B^{\frac1 p -1}_{p,\infty}}
		\leq& C \|v^{k+p}-v^{k}\|_{L^\infty}\|n^{k+p}\|_{B^{\frac1 p -1}_{p,\infty}}+C\|v^{k+p}-v^{k}\|_{B^{\frac1 p}_{p,\infty}}\|n^{k+p}\|_{B^{-1}_{\infty,\infty}}\\
		&+C\|v^{k+p}-v^{k}\|_{B^{\frac1 p }_{p,1}}\|n^{k+p}\|_{B^{-\frac 1 p}_{p^\prime,\infty}}
		\leq C\|n^{k+p}-n^k\|_{B^{\frac1 p -1}_{p,1}}\|n^{k}\|_{B^{\frac1 p }_{p,1}}
	\end{aligned}$$
	$$
	\begin{aligned}
		\|v^k(n^{k+p}-n^k)\|_{B^{\frac1 p -1}_{p,\infty}}
		\leq& C \|v^{k}\|_{L^\infty}\|n^{k+p}-n^{k}\|_{B^{\frac1 p -1}_{p,\infty}}+C\|v^{k}\|_{B^{\frac1 p }_{p,\infty}}\|n^{k+p}-n^k\|_{B^{-1}_{\infty,\infty}}\\
		&+C\|v^{k}\|_{B^{-\frac 1 p +1}_{p^\prime,\infty}}\|n^{k+p}-n^k\|_{B^{\frac 1 p-1}_{p,1}}
		\leq C\|n^{k+p}-n^k\|_{B^{\frac1 p -1}_{p,1}}\|n^{k}\|_{B^{\frac 1 p}_{p,1}}
	\end{aligned}$$
	$$
	\begin{aligned}
		\|(v_x^{k+p}-v_x^k)n^{k+p}\|_{B^{\frac1 p -1}_{p,\infty}}
		\leq& C \|v_x^{k+p}-v_x^{k}\|_{B^{\frac 1 p}_{p,1}}\|n^{k+p}\|_{B^{\frac 1 p}_{p,1}}
		\leq C\|n^{k+p}-n^k\|_{B^{\frac 1 p -1}_{p,1}}\|n^{k+p}\|_{B^{\frac 1 p}_{p,1}}
	\end{aligned}$$
	$$
	\begin{aligned}
		\|(n^{k+p}-n^k)v_x^k\|_{B^{\frac1 p -1}_{p,\infty}}
		\leq& C \|v_x^k\|_{B^{\frac 1 p}_{p,1}}\|n^{k+p}-n^{k}\|_{B^{\frac1 p -1}_{p,\infty}}+C\|v_x^k\|_{B^{\frac 1 p}_{p,\infty}}\|n^{k+p}-n^k\|_{B^{-1}_{\infty,\infty}}\\
		&+C\|v_x^k\|_{B^{\frac1 {p^\prime} }_{p^\prime,\infty}}\|n^{k+p}-n^k\|_{B^{\frac 1 p-1}_{p,1}}
		\leq C\|n^{k+p}-n^k\|_{B^{\frac 1 p-1}_{p,1}}\|n^{k}\|_{B^{\frac 1 p}_{p,1}}
	\end{aligned}$$
	Also we can obtain the initial data of system \eqref{difference} satisfies
	$$\begin{aligned}
		\|S_{k+p+1}n_0-S_{k+1}n_0\|_{B^{\frac1 p -1}_{p,\infty}}\leq  C\sum_{l=k}^{k+p+1}2^{l(1/p-1)}\|\Delta_ln_0\|_{L^p}\leq C2^{-k}\|n_0\|_{B^{\frac 1 p -1}_{p,1}}\longrightarrow 0(k\rightarrow\infty)
	\end{aligned}$$
	Since $B^{\frac 1 p}_{p,1}\hookrightarrow B^{\frac 1 p}_{p,\infty}$ and $B^{\frac 1 p}_{p,1}\hookrightarrow L^\infty$, we obtain the estimate of the flow
	$$\int_0^t\|4\partial_{x}(v_x^{k+p}+2\beta_0v^{k+p})\|_{B^{1/p}_{p,\infty}\cap L^\infty}{\rm d}\tau\leq C\int_0^t\|n^{k+p}\|_{B^{1/p}_{p,1}}{\rm d}\tau$$
	According to the logarithm interpolation of Lemma\ref{Besov}, and the uniform bound of $\{n^k\}_{k\in\mathbb{N}}$, we obtain
	$$\|w^{k+1,p}\|_{B^{\frac{1}{p}-1}_{p,\infty}}\leq C_T\left(2^{-k}+\int_0^t\|w^{k,p}\|_{B^{\frac 1 p -1}_{p,\infty}}\log\left(e+\frac 1 {\|w^{k,p}\|_{B^{\frac 1 p -1}_{p,\infty}}}\right){\rm d}\tau\right)$$
	denote that $w_k(t)=\sup_{\tau\in[0,t],p\in\mathbb{N}}\|(n^{k+p}-n^{k})(\tau)\|_{B^{\frac 1 p -1}_{p,\infty}}$, then we have
	$$w_{k+1}(t)\leq C_T\left(2^{-k}+\int_0^tw_k(\tau)log\left(e+\frac1{w_k(\tau)}\right){\rm d}\tau\right)$$
	
	Note that $\left\{w_k\right\}_{k\in\mathbb{N}}$, thus it is obvious that $\mu(x)=C_Txlog(e+\frac 1 x)$ is an Osgood modulus of continuity. Applying Lesbesgue-Fatou's lemma in real analysis, we conclude that, for $\forall t\in[0,T]$
	$$\limsup_{k\rightarrow\infty}w_k(t)\leq\int_0^t\mu(\limsup_{k\rightarrow\infty}w_k(\tau)){\rm d}\tau$$
	Hence according to the Osgood lemma, we have
	$$\limsup_{k\rightarrow\infty} w_k(t)=0,\ t\in[0,T]$$
	which implies that ${n^k}_{k\in\}_{k\in\mathbb{N}}}$ is a Cauchy sequence in the space $C([0,T];B^{1/p-1}_{p,\infty})$ and converges to the limit function $n\in C([0,T];B^{1/p}_{p,\infty})$.Along the same line of Case1, similarly we deduce that $n$ is a solution to \eqref{Npeakon} and $n\in E^{\frac 1 p}_{p,1}(T)$.
	
	\noindent\textbf{Step 3: Uniqueness.}
	
	Suppose $n_1,n_2$ are two solutions of \eqref{Npeakon} with initial data $n_{10}$ and $n_{20}$ respectively. Denoting $w\triangleq n_1-n_2$, we have
	\begin{equation}\label{unique equation}
		\left\{
		\begin{array}{ll}
			(\partial_t-4(v_{1x}+2\beta_0v_1))\partial_xw\\
			=-4(v_{1x}-v_{2x})n_{2x}-8\beta_0(v_1-v_2)n_{2x}-4(n_1-n_2)(n_1+n_2)\\
			\ \ \  -16\beta_0^2\left((v_1-v_2)n_1+v_2(n_1-n_2)\right)+8\beta\left((v_{1x}-v_{2x})n_1+(n_1-n_2)v_{1x}\right)\\
			v_1=(4\beta_0^2-\partial_{x}^2)^{-1}n_1,\ v_2=(4\beta_0^2-\partial_{x}^2)^{-1}n_2\\
			w|_{t=0}=n_{10}-n_{20}
		\end{array}
		\right.
	\end{equation}
	
	If $s\geq 1/2, p>2$ or $s>1/p, 1\leq p\leq2$, along the same computation in step 2, $\forall t\in[0,T]$, we have inequality
	$$
	e^{-C\int_0^t\|n_1\|_{B^s_{p,r}}{\rm d}\tau}\|w(t)\|_{B^{s-1}_{p,r}}\leq\|w_0\|_{B^{s-1}_{p,r}}+C\int_0^te^{-C\int_0^{t^\prime}\|n10\|_{B^s_{p,r}}{\rm d}\tau}\|w(t^\prime)\|_{B^{s-1}_{p,r}}(\|n_1\|_{B^s_{p,r}}+\|n_2\|_{B^s_{p,r}}){\rm d}t^\prime
	$$
	By Gronwall's inequality
	\begin{equation}\label{unique esimate1}
		\|w(t)\|_{B^{s-1}_{p,r}}\leq\|w_0\|_{B^{s-1}_{p,r}}\exp\left(C\int_0^t\|n_1\|_{B^s_{p,r}}+\|n_2\|_{B^s_{p,r}}{\rm d}\tau\right)
	\end{equation}
	
	If $s=1/p,1\leq p\leq 2,r=1$, similar to step 2, we have inequality
	
	$
	\begin{aligned}
		&e^{-C\int_0^t\|n_1\|_{B^{1/p}_{p,1}}{\rm d}\tau}\|w(t)\|_{B^{\frac 1 p -1}_{p,\infty}}\\
		&\leq\|w_0\|_{B^{\frac 1 p -1}_{p,\infty}}+C\int_0^te^{-C\int_0^{t^\prime}\|n_1\|_{B^{\frac 1 p}_{p,1}}{\rm d}\tau}\|w(t^\prime)\|_{B^{\frac 1 p -1}_{p,\infty}}(\|n_1\|_{B^{\frac 1 p}_{p,1}}+\|n_2\|_{B^{\frac 1 p}_{p,1}})\log\left(e+\frac{\|n_1\|_{B^{\frac 1 p}_{p,1}}+\|n_2\|_{B^{\frac 1 p}_{p,1}}}{\|w(t)\|_{B^{\frac 1 p -1}_{p,\infty}}}\right){\rm d}t^\prime\nonumber
	\end{aligned}$
	denote $h(t)\triangleq \|w(t)\|_{B^{1/p-1}_{p,\infty}}e^{-C\int_0^t\|n_1\|_{B^{1/p}_{p,1}}{\rm d}\tau}/\left(\|n_1\|_{L^\infty_T(B^{1/p}_{p,1})}+\|n_2\|_{L^\infty_T(B^{1/p}_{p,1})}\right)$ and $|h(t)|\leq1$, obviously, since
	
	$$x\log(e+\frac 1 x)\leq x(2-\log x),\ \forall x\in[0,1]$$
	then we have
	
	$$h(t)\leq h(0)+C\int_0^th(\tau)(\|n_1\|_{L^\infty_T(B^{1/p}_{p,1})}+\|n_2\|_{L^\infty_T(B^{1/p}_{p,1})})^2(2-\log h(\tau)){\rm d}\tau$$
	
	Since $\mu(x)=x(2-\log x)$ is an Osgood modulus of continuity on $[0,1]$, then by Osgood lemma, we get inequality
	
	$$\int_{h(0)}^{h(t)}\frac{{\rm d}r}{r(2-\log r)}\leq C\int_0^t(\|n_1\|_{L^\infty_T(B^{1/p}_{p,1})}+\|n_2\|_{L^\infty_T(B^{1/p}_{p,1})})^2{\rm d}\tau$$
	then by the definition of $h$, we have
	\begin{equation}\label{unique esimate2}
		\begin{aligned}
			&\|n_1-n_2\|_{L^\infty_T(B^{1/p-1}_{p,1})}\\
			&\leq\  e^2(\|n_1\|_{L^\infty_T(B^{1/p}_{p,1})}+\|n_2\|_{L^\infty_T(B^{1/p}_{p,1})})\exp(C\int_0^t \|n_1\|_{L^\infty_T(B^{1/p}_{p,1})}+\|n_2\|_{L^\infty_T(B^{1/p}_{p,1})}{\rm d}\tau)\\
			&\ \ \ \ \times(\|n_{10}-n_{20}\|_{(B^{1/p-1}_{p,\infty})})^{\exp(-C\int_0^t\|n_1\|_{L^\infty_T(B^{1/p}_{p,1})}+\|n_2\|_{L^\infty_T(B^{1/p}_{p,1})}{\rm d}\tau)}
		\end{aligned}
	\end{equation}
	Then the uniqueness of solution $n$ to the system \eqref{Npeakon} in the space $E^p_{p,r}(T)$	follows from \eqref{unique esimate1} and \eqref{unique esimate2}.
			
	\noindent\textbf{Step 4: Continuous Dependence.}
	
	Now according to Lemma\ref{continuous}, denote $\bar{\mathbb{N}}=\mathbb{N}\cup\{\infty\}$. suppose $n^k\in C([0,T];B^s_{p,r})$ is the solution to \eqref{Npeakon} with initial data $n_0^k\in B^s_{p,r}$, then we have a sequence of systems
	
	$$
	\left\{
	\begin{array}{l}
		\partial_t n^k-4(v_x^k+2\beta_0v^k)\partial_{x}n^k=f^k\\
		n^k=4\beta_0^2 v^k-v_{xx}^k\\
		n^k|_{t=0}=n_0^k
	\end{array}
	\right.
	$$
	with $f^k\triangleq16\beta_0^2v^kn^k-4(n^k)^2+8\beta v_x^kn^k$.
	
	Next we claim that if $n^k_0\rightarrow n^\infty_0$ in $B^s_{p,r}$ when $k$ tends to $\infty$, then $n^k\rightarrow n^\infty$ in $C([0,T];B^s_{p,r})$ with $T$ satisfying $2C^2\sup_{k\in\bar{\mathbb{N}}}\|n_0^k\|_{B^s_{p,r}}T<1$.
	
	\noindent(1) r<$\infty$. Decompose $n^k$ into $n^k=y^k+z^k$, and satisfy
	\begin{equation}
		\left\{
		\begin{array}{l}
			\partial_t y^k-4(v^k_x+2\beta_0v^k)\partial_x y^k=f^\infty\\
			y^k|_{t=0}=n^k_0\nonumber
		\end{array}
		\right.
		\quad {\rm and} \quad
		\left\{
		\begin{array}{l}
			\partial_t z^k-4(v^k_x+2\beta_0v^k)\partial_x z^k=f^k-f^\infty\\
			z^k|_{t=0}=n^k_0-n^\infty_0\nonumber
		\end{array}
		\right.
	\end{equation}
	Denote $m=\sup_{k\in\bar{\mathbb{N}}}\|n_0^k\|_{B^s_{p,r}}$, and letting $M=\frac{Cm}{1-2C^2mT}$, according to the proof of the existence, we get that
	$$\|n^k(t)\|_{B^s_{p,r}}\leq M\quad \forall t\in[0,T]$$
	
	Hence, $(f^)k_{k\in\}_{k\in\mathbb{N}}}$ is uniformly bounded in $C([0,T];B^s_{p,r})$ and
	$$\|v^k_x+2\beta_{0}v^k\|_{B^{s+1}_{p,r}}\leq C\|v^k\|_{B^{s+2}_{p,r}}\leq C\|n^k\|_{B^s_{p,r}}\leq CM$$
	also it is obvious that 
	$$\|(v^k-v^\infty)_x+2\beta_{0}(v^k-v^\infty)\|_{B^{s+1}_{p,r}}\leq C\|n^k-n^\infty\|_{B^s_{p,r}}$$
	similar to the proof of the uniqueness, since $n_0^k\rightarrow n_0^\infty$ in $B^s_{p,r}$, then we have $n^k\rightarrow n^\infty$ in $L^1([0,T];B^{s-1}_{p,r})\Rightarrow v^k_x+2\beta_{0}v^k\rightarrow v^\infty_x+2\beta_{0}v^\infty$ in $L^1([0,T];B^{s}_{p,r})$. Then by Lemma\ref{continuous}
	
	\begin{center}
		$y^k\rightarrow n^\infty$ in $C([0,T];B^s_{p,r})$ as $k\rightarrow\infty$ 
	\end{center}
	
	To control $z^k$, we need to estimate $f^k-f^\infty$, by Lemma\ref{Bony}, we have
	$$\|f^k-f^\infty\|_{B^s_{p,r}}\leq C(\|n^k\|_{B^s_{p,r}}+\|n^\infty\|_{B^s_{p,r}})\|n^k-n^\infty\|_{B^s_{p,r}}$$
	then according to Lemma\ref{priori estimate}
	$$\|z^k\|_{B^s_{p,r}}\leq Ce^{CMT}(\|n^k_0-n^\infty_0\|_{B^s_{p,r}}+\int_0^t\|(n^k-n^\infty)(\tau)\|_{B^s_{p,r}}{\rm d}\tau)$$
	
	Therefore, we have $\forall \varepsilon>0$, $\exists N_1$ large enough, when $k>N_1$, $\|y^k-n^\infty\|_{B^s_{p,r}}<\varepsilon$, then
	$$\|n^k-n^\infty\|_{B^s_{p,r}}\leq\|z^k\|_{B^s_{p,r}}+\|y^k-n^\infty\|_{B^s_{p,r}}\leq\varepsilon+Ce^{CMT}(\|n^k_0-n^\infty_0\|_{B^s_{p,r}}+\int_0^t\|(n^k-n^\infty)(\tau)\|_{B^s_{p,r}}{\rm d}\tau)$$
	by Gronwall's inequality, $\forall t\in[0,T]$
	$$\|n^k-n^\infty\|_{B^s_{p,r}}\leq C(\varepsilon+\|n_0^k-n_0^\infty\|_{B^s_{p,r}})$$
	Hence we gain the continuity of the system \eqref{Npeakon} in $C([0,T];B^s_{p,r})$ with respect to the initial data in $B^s_{p,r}$.
	
	\noindent(2) $r=\infty$. Similar to Step2, we have $\|n^k-n^\infty\|_{L^\infty([0,T];B^{s-1}_{p,\infty})}\rightarrow0(k\rightarrow\infty)$. Then $\forall\phi\in B^{-s}_{p^\prime,1}$, we have 
	$$ \langle n^k-n^\infty\rangle=\langle S_j(n^k-n^\infty),\phi\rangle+\langle (Id-S_j)(n^k-n^\infty),\phi\rangle=\langle n^k-n^\infty,S_j\phi\rangle+\langle n^k-n^\infty,(Id-S_j)\phi\rangle$$
	and
	$$|\langle n^k-n^\infty,(Id-S_j)\phi\rangle|\leq CM\|\phi-S_j\phi\|_{B^{-s}_{p^\prime,1}}\quad{\rm and}\quad|\langle n^k-n^\infty,S_j\phi\rangle|\leq \|n^k-n^\infty\|_{B^{s-1}_{p,\infty}}\|S_j\phi\|_{B^{1-s}_{p^\prime,1}}$$
	by Lemma\ref{Besov}, we can conclude that $\langle n^k-n^\infty,\phi\rangle\rightarrow0$ as $k\rightarrow0$ for $r=\infty$. Thus we gain the continuity of the system \eqref{Npeakon} in $C_w([0,T];B^s_{p,\infty})$ with respect to the initial data in $B^s_{p,\infty}$.

\end{proof}
\section{Blow-up criterion}
We can use the method introduced by Zhang and Yin\cite{blowupcriterion} to obtain a blow-up criterion as follows.
\begin{theo}\label{blow up criterion}
	Let $n_0\in B^s_{p,r}$ with $1\leq p,r\leq \infty$, and $(s,p,r)$ satisfy condition \eqref{index} and let $T>0$ be the maximal existence time of the corresponding solution $n$ to \eqref{Npeakon}. If $T<\infty$, then we have
	$$\int_0^T \|n(t)\|_{L^\infty}{\rm d}t$$
\end{theo}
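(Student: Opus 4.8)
The conclusion of Theorem \ref{blow up criterion} should read $\int_0^T\|n(t)\|_{L^\infty}\,{\rm d}t=\infty$; equivalently, if $M\triangleq\int_0^T\|n(\tau)\|_{L^\infty}\,{\rm d}\tau<\infty$, then the maximal solution can be continued past $T$, so $T$ cannot be the maximal existence time. The plan is therefore to argue by contradiction: assume $T<\infty$ and $M<\infty$, and show that $\|n(t)\|_{B^s_{p,r}}$ stays bounded as $t\uparrow T$.

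First I would record the elementary kernel bounds for $v$. Since $v=(4\beta_0^2-\partial_x^2)^{-1}n=\tfrac1{4|\beta_0|}e^{-2|\beta_0||\cdot|}\ast n$ and this kernel together with its distributional derivative lies in $L^1(\mathbb{R})$, Young's inequality gives $\|v\|_{L^\infty}+\|v_x\|_{L^\infty}\leq C\|n\|_{L^\infty}$, and then $v_{xx}=4\beta_0^2 v-n$ yields $\|v_{xx}\|_{L^\infty}\leq C\|n\|_{L^\infty}$. Hence the transport coefficient $a\triangleq-4(v_x+2\beta_0 v)$ of \eqref{Npeakon} obeys $\|\partial_x a\|_{L^\infty}\leq C\|n\|_{L^\infty}$, while Proposition \ref{multiplier} and the embeddings $B^s_{p,r}\hookrightarrow B^{s-1}_{p,r}\hookrightarrow B^{s-2}_{p,r}$ give $\|v\|_{B^s_{p,r}}+\|v_x\|_{B^s_{p,r}}+\|\partial_x a\|_{B^{s-1}_{p,r}}\leq C\|n\|_{B^s_{p,r}}$.

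Next, regarding \eqref{Npeakon} as the linear transport problem $\partial_t n+a\,\partial_x n=g$ with forcing $g=16\beta_0^2 vn-4n^2+8\beta_0 v_x n$, I would apply $\Delta_j$, commute it past $a\partial_x$, use the standard Bony-type commutator estimate, multiply by $2^{js}$ and sum in $j$, exactly as in the a priori estimate Lemma \ref{priori estimate} (or Lemma \ref{refine priori}). The essential point is to keep regularities separated: the commutator contributes $\|\partial_x a\|_{L^\infty}\|n\|_{B^s_{p,r}}+\|\partial_x a\|_{B^{s-1}_{p,r}}\|n\|_{L^\infty}\lesssim\|n\|_{L^\infty}\|n\|_{B^s_{p,r}}$, and the forcing, by the product law Lemma \ref{product}(1) and the previous paragraph, contributes $\|g\|_{B^s_{p,r}}\lesssim\|n\|_{L^\infty}\|n\|_{B^s_{p,r}}$. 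This gives, for $t\in[0,T)$,
$$\|n(t)\|_{B^s_{p,r}}\leq\|n_0\|_{B^s_{p,r}}+C\int_0^t\|n(\tau)\|_{L^\infty}\|n(\tau)\|_{B^s_{p,r}}\,{\rm d}\tau,$$
so Gronwall's inequality yields $\|n(t)\|_{B^s_{p,r}}\leq\|n_0\|_{B^s_{p,r}}e^{CM}$ on $[0,T)$. In the critical case $s=\tfrac1p$, $1\leq p\leq2$, $r=1$, the $L^\infty$-endpoint of the flow estimate only controls $\|\partial_x a\|_{B^{1/p}_{p,\infty}\cap L^\infty}$, so I would instead run the estimate in $B^{1/p}_{p,\infty}$, insert the logarithmic interpolation Lemma \ref{Besov}(6) to recover $B^{1/p}_{p,1}$, and close the resulting inequality with the Osgood Lemma \ref{Osgood}, obtaining again that $\|n(t)\|_{B^{1/p}_{p,1}}$ stays bounded as $t\uparrow T$.

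Finally, with $K\triangleq\sup_{t\in[0,T)}\|n(t)\|_{B^s_{p,r}}<\infty$ in hand, the quantitative lower bound $T^\ast\geq C/\|n_0\|_{B^s_{p,r}}$ of Theorem \ref{well posedness} applies at any later base point: choosing $t_0\in[0,T)$ with $T-t_0<C/K$ and solving \eqref{Npeakon} with data $n(t_0)\in B^s_{p,r}$ produces a solution on $[t_0,t_0+C/K]$, which by uniqueness prolongs $n$ beyond $T$, contradicting the maximality of $T$. Hence $\int_0^T\|n(t)\|_{L^\infty}\,{\rm d}t=\infty$. I expect the main obstacle to be precisely the bookkeeping in the transport/commutator step: one must ensure that only $\|n\|_{L^\infty}$, and never a full $B^s_{p,r}$ (or $B^{s-1}_{p,r}$) norm, appears as the coefficient of $\|n\|_{B^s_{p,r}}$ inside the Gronwall loop, and, at the critical index, that the failing $L^\infty$-type flow bound can be replaced by the logarithmic--Osgood mechanism.
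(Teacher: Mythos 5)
Your skeleton (argue by contradiction, propagate a $B^s_{p,r}$ bound from $\int_0^T\|n\|_{L^\infty}\,{\rm d}t<\infty$, then restart the local theory near $T$ via the lifespan bound $T^*\gtrsim\|n_0\|_{B^s_{p,r}}^{-1}$ of Theorem \ref{well posedness}) is exactly the paper's, and you are right that the displayed conclusion should read $\int_0^T\|n(t)\|_{L^\infty}\,{\rm d}t=\infty$. The gap is in the single step carrying all the weight: you invoke ``the standard Bony-type commutator estimate'' to claim the commutator contributes $\|\partial_x a\|_{L^\infty}\|n\|_{B^s_{p,r}}+\|\partial_x a\|_{B^{s-1}_{p,r}}\|n\|_{L^\infty}$. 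No quoted lemma gives this. Lemma \ref{priori estimate} with an $L^\infty$-type coefficient requires $s<1+\frac{1}{p_1}$ with $p_1=\infty$, i.e.\ $s<1$; once $s\geq 1$ it forces $V'=\|\partial_x a\|_{B^{s-1}_{p_1,r}}\sim\|n\|_{B^{s-1}_{p,r}}$, which is not controlled by $\|n\|_{L^\infty}$. The refined two-term commutator estimate of Bahouri--Chemin--Danchin has $\|\partial_x n\|_{L^\infty}\|\partial_x a\|_{B^{s-1}_{p,r}}$ as its second term, i.e.\ a derivative of the transported quantity in $L^\infty$, which is precisely what the hypothesis does not provide. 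What one could hope to prove, by exploiting that $a=-4(\partial_x+2\beta_0)(4\beta_0^2-\partial_x^2)^{-1}n$ is one derivative smoother than $n$ (e.g.\ bounding the $T_{\partial_x n}a$ piece through $\|\partial_x n\|_{B^{-1}_{\infty,\infty}}\|a\|_{B^{s+1}_{p,r}}$ or Bernstein on $S_{j-1}\partial_x n$), is a second term of the form $\|n\|_{L^\infty}\|a\|_{B^{s+1}_{p,r}}\lesssim\|n\|_{L^\infty}\|n\|_{B^s_{p,r}}$; your written form is one derivative stronger even than that, so it cannot simply be quoted and would itself require a proof valid across all index ranges in \eqref{index}. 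As it stands, the key inequality $\|n(t)\|_{B^s_{p,r}}\leq\|n_0\|_{B^s_{p,r}}+C\int_0^t\|n\|_{L^\infty}\|n\|_{B^s_{p,r}}\,{\rm d}\tau$ is unproven for $s\geq 1$.

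The paper avoids any new commutator lemma by a bootstrap in the regularity index, using only Lemma \ref{priori estimate} and the product laws: for $\sigma\in(0,1)$ it takes $p_1=\infty$, where $V'=\|4(v_x+2\beta_0v)_x\|_{B^0_{\infty,\infty}\cap L^\infty}\lesssim\|n\|_{L^\infty}$, so $\int_0^T\|n\|_{L^\infty}\,{\rm d}t<\infty$ already bounds $\|n\|_{L^\infty_T(B^\sigma_{p,r})}$; then $\sigma=1$ with $p_1=p$, whose coefficient $\lesssim\|n\|_{L^\infty}+\|n\|_{B^{1/p}_{p,r}}$ is furnished by the previous step; then it climbs from $1$ up to $s$ in increments of $1-\frac1p$, each step's coefficient $\|n\|_{B^{\sigma-1+1/p}_{p,r}}$ being supplied by the step before. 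You should either adopt this induction or genuinely state and prove the tailored commutator estimate. Finally, your critical-case sketch ($s=\frac1p$, $r=1$) via logarithmic interpolation plus Osgood does not obviously close, since Lemma \ref{Besov}(6) places the not-yet-controlled norm $\|n\|_{B^{s+\varepsilon}_{p,\infty}}$ inside the logarithm; in the paper's scheme the case $s=\frac1p<1$ simply falls under the $\sigma\in(0,1)$ step and needs no Osgood argument at all.
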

\begin{proof}
	For simplicity, consider the case $1<p<\infty$. For all $\sigma$ satisfies $0<\sigma\leq s$, according to the product laws, since $\|v\|_{L^\infty}\leq C\|n\|_{L^\infty}$ and $\|v\|_{B^\sigma_{p,r}}\leq C\|n\|_{B^\sigma_{p,r}}$, we have
	\begin{equation}\label{transport term}
		\|16\beta_0^2vn-4n^2+8\beta_0v_xn\|_{B^\sigma_{p,r}}\leq C\|n\|_{L^\infty}\|n\|_{B^\sigma_{p,r}}
	\end{equation}
	
	\noindent(1) $\sigma\in(0,1)$.
	
	Applying Lemma\ref{priori estimate} with $p_1=\infty$, then
	$$\|n(t)\|_{B^\sigma_{p,r}}\leq \|n_0\|_{B^\sigma_{p,r}}+C\int_0^t\|n(\tau)\|_{B^\sigma_{p,r}}\|n(\tau)\|_{L^\infty}{\rm d}\tau+\int_0^tCV_{p_1}^\prime\|n(\tau)\|_{B^\sigma_{p,r}}{\rm d}\tau$$
	with
	$$V_{p_1}^\prime(t)=\|4(v_x+2\beta_0 v)_x\|_{B^0_{\infty,\infty}\cap L^\infty}\leq C\|n\|_{L^\infty}$$
	by Gronwall's inequality
	$$\|n(t)\|_{B^\sigma_{p,r}}\leq\|n_0\|_{B^\sigma_{p,r}}\exp(C\int_0^t\|n(\tau)\|_{L^\infty}{\rm d}\tau)$$
	which implies
	\begin{equation}\label{blow0-1}
		\int_0^T\|n(\tau)\|_{L^\infty}{\rm d}\tau<\infty\Longrightarrow\|n\|_{L^\infty_T(B^\sigma_{p,r})}<\infty
	\end{equation}
	\noindent(2) $\sigma=1$
	
	Applying Lemma\ref{priori estimate} with $p=p_1$, 
	and
	$$V^\prime_{p_1}=\|4(v_x+2\beta_0v_x)\|_{B^{1/p}_{p,\infty}\cap L^\infty}\leq C(\|v\|_{L^\infty}+\|v\|_{B^{1/p+2}_{p,\infty}})\leq C(\|n\|_{L^\infty}+\|n\|_{B^{1/p}_{p,r}})$$
	then we have
	$$\|n(t)\|_{B^1_{p,r}}\leq \|n_0\|_{B^1_{p,r}}+C\int_0^t\|n(\tau)\|_{B^1_{p,r}}\|n(\tau)\|_{L^\infty}{\rm d}\tau+C\int_0^t(\|n(\tau)\|_{L^\infty}+\|n(\tau)\|_{B^{1/p}_{p,r}})\|m(\tau)\|_{B^1_{p,r}}{\rm d}\tau$$
	by Gronwall's inequality
	\begin{equation}
		\|n(t)\|_{B^1_{p,r}}\leq C\|n_0\|_{B^1_{p,r}}\exp(C\int_0^t\|n(\tau)\|_{L^\infty}+\|n(\tau)\|_{B^{1/p}_{p,r}}{\rm d}\tau)
	\end{equation}
	which implies 
	\begin{equation}\label{blow1}
		\|n\|_{L^\infty_T(B^{1/p}_{p,r})}<\infty\ \ {\rm and}\ \ \int_0^T\|n(\tau)\|_{L^\infty}{\rm d}\tau<\infty\Longrightarrow\|n\|_{L^\infty_T(B^\sigma_{p,r})}<\infty
	\end{equation}
	
	\noindent(3) $\sigma>1$
	
	According to \eqref{transport term} and Lemma\ref{priori estimate} with $p_1=\infty$, and
	$$V^\prime_{p_1}(t)=\|4(v_x+2\beta_0v_x)\|_{B^{\sigma-1}_{\infty,r}}\leq C\|v\|_{B^{\sigma+1}_{\infty,r}}\leq C\|n\|_{B^{\sigma-1+1/p}_{p,r}}$$
	then we have
	$$\|n(t)\|_{B^\sigma_{p,r}}\leq \|n_0\|_{B^\sigma_{p,r}}+C\int_0^t\|n(\tau)\|_{B^\sigma_{p,r}}\|n(\tau)\|_{L^\infty}{\rm d}\tau+C\int_0^t\|n(\tau)\|_{B^{\sigma-1+1/p}_{p,r}}\|n(\tau)\|_{B^\sigma_{p,r}}{\rm d}\tau$$
	by Gronwall's inequality
	$$\|n(t)\|_{B^\sigma_{p,r}}\leq\|n_0\|_{B^\sigma_{p,r}}\exp(C\int_0^t\|n(\tau)\|_{L^\infty}+\|n(\tau)\|_{B^{\sigma-1+1/p}_{p,r}}{\rm d}\tau)$$
	which implies
	$$\|n\|_{L^\infty_T(B^{\sigma-1+1/p}_{p,r})}<\infty\ \ {\rm and}\ \ \int_0^T\|n(\tau)\|_{L^\infty}{\rm d}\tau<\infty\Longrightarrow\|n\|_{L^\infty_T(B^\sigma_{p,r})}<\infty$$
	
	If $\sigma-1+\frac 1 p>1$, we can repeat the above process. Indeed, since
	$$\|4(v_x+2\beta_0v)_x\|_{B^{\sigma-1+1/p-1}_{\infty,r}}\leq C\|v\|_{B^{\sigma+1/p}_{\infty,r}}\leq C\|n\|_{B^{\sigma-2+2/p}_{p,r}}$$
	we similar have
	$$\|n\|_{L^\infty_T(B^{\sigma-2+1/p}_{p,r})}<\infty\ \ {\rm and}\ \ \int_0^T\|n(\tau)\|_{L^\infty}{\rm d}\tau<\infty\Longrightarrow\|n\|_{L^\infty_T(B^{\sigma-1+1/p}_{p,r})}<\infty$$
	
	By induction, we can choose $k\in\mathbb{N}^+$ such that $0<\sigma-k(1-/frac 1 p)\leq 1$, and $B^1_{p,r}\hookrightarrow B^{\sigma-k(1-\frac 1 p)}_{p,r}$, gives that
	$$\|n\|_{L^\infty_T(B^1_{p,r})}\leq \infty\ \ {\rm and}\ \ \int_0^T\|n\|_{L^\infty}<\infty\Rightarrow\|n\|_{L^\infty_T(B^{\sigma-(k-1)(1-\frac 1 p)}_{p,r})}<\infty\Rightarrow\cdots\Rightarrow\|n\|_{L^\infty_T(B^{\sigma}_{p,r})}<\infty$$
	Therefore we can conclude that
	$$\|n\|_{L^\infty_T(B^{1}_{p,r})}<\infty\ \ {\rm and}\ \ \int_0^T\|n\|_{L^\infty}<\infty\Rightarrow\|n\|_{L^\infty_T(B^{\sigma}_{p,r})}<\infty$$
	
	Return to $L^\infty_T(B^s_{p,r})$ and suppose $\int_0^T\|n\|_{L^\infty}{\rm d}\tau<\infty$, we have:
	
	\noindent\textbf{Case1: s<1} By (1), we have $\|n\|_{L^\infty_T(B^s_{p,r})}<\infty$.
	
	\noindent\textbf{Case2: s=1} By (1), we have $\|n\|_{L^\infty_T(B^{1/p}_{p,r})}<\infty$ and by (2) we get that
	$\|n\|_{L^\infty_T(B^1_{p,r})}<\infty$.
	
	\noindent\textbf{Case3: s>1} By (1), we have $\|n\|_{L^\infty_T(B^{1/p}_{p,r})}<\infty$ and by (2) we get that
	$\|n\|_{L^\infty_T(B^1_{p,r})}<\infty$, lastly by the induction in (3), we can deduce that 	$\|n\|_{L^\infty_T(B^1_{p,r})}<\infty$.
	
	Therefore $\int_0^T\|n\|_{L^\infty}{\rm d}\tau<\infty\Rightarrow\|n\|_{L^\infty(B^s_{p,r})}<\infty$. Then by what we proved in Thm\ref{well posedness}, $\exists T_0$ small enough such that for some suitable $\varepsilon>0$,  $2C^2\|n(T-\varepsilon)\|_{B^s_{p,r}}T_0<1$, then follow then same line of Step1 in the proof of Thm\ref{well posedness}, we can extend the solution $n$ on $[0,T)\times \mathbb{R}$ to $\tilde{n}$ on $[0,T-\varepsilon+T_0)\times\mathbb{R}$ and by the uniqueness $n\equiv\tilde{n}$ for any $t\in[0,T]$.

\end{proof}

\begin{coro}
	Let $n_0\in B^s_{p,r}$ with $(s,p,r)$ satisfying \eqref{index}, $T>0$ be the maximal existence time of the corresponding solution $n\in E^s_{p,r}(T_-)$ to system \eqref{Npeakon}. Then $n$ blows up in finite time if and only if
	$$\limsup_{t\uparrow T}\|n\|_{L^\infty}=\infty$$
\end{coro}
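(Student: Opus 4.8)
The plan is to derive the corollary directly from Theorem \ref{blow up criterion}, which (despite the slightly garbled display in the statement) asserts that if the maximal existence time $T$ is finite then $\int_0^T\|n(t)\|_{L^\infty}\,{\rm d}t=\infty$. The corollary is then essentially the contrapositive, together with the trivial direction, so the proof is short.

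First I would prove the ``only if'' direction. Suppose $n$ blows up in finite time, i.e. $T<\infty$. By Theorem \ref{blow up criterion} we have $\int_0^T\|n(t)\|_{L^\infty}\,{\rm d}t=\infty$. Since $n\in E^s_{p,r}(T_-)$ and the interval $[0,T)$ has finite length, the integral can only diverge if the integrand is unbounded; more precisely, if $\limsup_{t\uparrow T}\|n(t)\|_{L^\infty}=:K<\infty$, then $\|n(t)\|_{L^\infty}\le K+1$ for $t$ close to $T$, and combined with the continuity of $t\mapsto\|n(t)\|_{L^\infty}$ on the compact subintervals of $[0,T)$ (which follows from $n\in C([0,T);B^s_{p,r})$ and the embedding $B^s_{p,r}\hookrightarrow L^\infty$ guaranteed by condition \eqref{index}) we would get $\int_0^T\|n(t)\|_{L^\infty}\,{\rm d}t<\infty$, a contradiction. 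Hence $\limsup_{t\uparrow T}\|n(t)\|_{L^\infty}=\infty$.

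Conversely, for the ``if'' direction I would argue by contraposition: if $n$ does not blow up in finite time, then $T=\infty$, and for any finite $\tilde T$ the solution belongs to $C([0,\tilde T];B^s_{p,r})$, so $\sup_{t\in[0,\tilde T]}\|n(t)\|_{L^\infty}\le C\sup_{t\in[0,\tilde T]}\|n(t)\|_{B^s_{p,r}}<\infty$; in particular $\limsup_{t\uparrow T}\|n(t)\|_{L^\infty}$ is not $+\infty$ in the relevant sense (the limit is taken as $t\to\infty$ and along any finite window the norm stays bounded). Equivalently, if $\limsup_{t\uparrow T}\|n(t)\|_{L^\infty}=\infty$ with $T<\infty$ excluded would force $T=\infty$, contradicting finite-time blow-up; so $\limsup_{t\uparrow T}\|n(t)\|_{L^\infty}=\infty$ implies $T<\infty$, i.e. $n$ blows up in finite time.

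The only mild subtlety — and the step I would be most careful about — is the passage from ``$\int_0^T\|n\|_{L^\infty}\,{\rm d}t=\infty$'' to ``$\limsup_{t\uparrow T}\|n\|_{L^\infty}=\infty$''. This uses that $\|n(\cdot)\|_{L^\infty}$ is continuous on $[0,T)$ and hence locally bounded, so a finite $\limsup$ at $T$ would make the function bounded on all of $[0,T)$ and the integral over a finite interval finite. The continuity is exactly where the embedding $B^s_{p,r}\hookrightarrow L^\infty$ under condition \eqref{index} and the regularity $n\in C([0,T);B^s_{p,r})$ from Theorem \ref{well posedness} enter. Everything else is formal.
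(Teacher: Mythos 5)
Your argument is correct and matches how the paper treats this statement: the corollary is given no separate proof there, being an immediate consequence of Theorem \ref{blow up criterion} (whose truncated conclusion is indeed $\int_0^T\|n(t)\|_{L^\infty}\,{\rm d}t=\infty$), and your ``only if'' step --- a divergent integral over the finite interval $[0,T)$ forces $\limsup_{t\uparrow T}\|n\|_{L^\infty}=\infty$ because $B^s_{p,r}\hookrightarrow L^\infty$ under \eqref{index} makes $\|n(t)\|_{L^\infty}$ locally bounded --- is exactly the intended filling-in, with the converse direction being essentially definitional. The one detail worth adjusting is the case $r=\infty$, where $n$ is only weakly continuous in $B^s_{p,\infty}$, so rather than continuity of $t\mapsto\|n(t)\|_{L^\infty}$ you should invoke local boundedness (weakly continuous orbits are norm-bounded on compact subintervals), which is all your contradiction requires.
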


\section{Blow-up}

	As we have the blow-up criterion, consider the following initial value problem
	\begin{equation}\label{flow map}
		\left\{
		\begin{array}{l}
			\psi_t(t,x)=-4(v_x+2\beta_0v)(t,\psi(t,x)),\ t\in[0,T]\\
			\psi(0,x)=x
		\end{array}
		\right.
	\end{equation}
	then the system \eqref{Npeakon} in Lagrangian coordinates can be written as
	\begin{equation}\label{Lagrangian}
		\left\{
		\begin{array}{l}
			\partial_tn(t,\psi(t,x))=(16\beta_0^2vn-4n^2+8\beta_0v_xn)(t,\psi(t,x))\\
			n(0,\psi(0,x))=n_0(x)
		\end{array}
		\right.
	\end{equation}
	
	\begin{theo}
		Let $n_0\in B^s_{p,r}$ with $(s,p,r)$ satisfying \eqref{index}. There exists time $T_1=\frac 1 {32\max(\beta_0^2,\beta_0^{-2})\|v_0\|_{W^{1,\infty}}}$, if  $\exists\ x_0\in\mathbb{R}$ such that
		\begin{equation}\label{blow initial}
			n_0(x_0)<-\sqrt{\frac b 2}+\frac{\sqrt{2b}}{1-\exp(2\sqrt{2b}T_1)}\quad{\rm wtih}\quad b=(16\beta_0^2\|v_0\|_{L^\infty})^2+(8\beta_0\|v_{0x}\|_{L^\infty})^2
		\end{equation}
		then $n$ will blow up within $T_1$.
		
		More precisely, $n$ will blow up at time $T_2=\frac{1}{2\sqrt{2b}}\log(\frac{\sqrt{2}n_0(x_0)-\sqrt{b}}{\sqrt{2}n_0(x_0)+\sqrt{b}})$.
	\end{theo}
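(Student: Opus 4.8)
The strategy is to follow the solution along the Lagrangian flow \eqref{flow map} and reduce the whole problem to a scalar Riccati differential inequality along the single characteristic through $x_0$. First I would record the preliminaries. Since $(s,p,r)$ satisfies \eqref{index} we have $B^s_{p,r}\hookrightarrow L^\infty$, and by Proposition \ref{multiplier} the fields $v=(4\beta_0^2-\partial_x^2)^{-1}n$ and $v_x=\partial_x(4\beta_0^2-\partial_x^2)^{-1}n$ lie in $B^{s+2}_{p,r}$ and $B^{s+1}_{p,r}$; in particular $v_x+2\beta_0v$ is Lipschitz in $x$, uniformly on the lifespan, so \eqref{flow map} has a unique solution and each $\psi(t,\cdot):\mathbb R\to\mathbb R$ is an increasing diffeomorphism. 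Consequently $\sup_x|v(t,\psi(t,x))|=\|v(t)\|_{L^\infty}$ and likewise for $v_x$. Writing $M(t):=n(t,\psi(t,x_0))$, equation \eqref{Lagrangian} gives $M'(t)=-4M(t)^2+g(t)M(t)$ with $g(t):=\big(16\beta_0^2v+8\beta_0v_x\big)(t,\psi(t,x_0))$, so the entire task reduces to a short-time bound on $g$.

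The heart of the argument is an a priori bound on $\|v\|_{W^{1,\infty}}$ over $[0,T_1]$. The convolution kernels of $(4\beta_0^2-\partial_x^2)^{-1}$ and $\partial_x(4\beta_0^2-\partial_x^2)^{-1}$ are $\tfrac{1}{4|\beta_0|}e^{-2|\beta_0||\cdot|}$ and $-\tfrac12\mathrm{sgn}(\cdot)e^{-2|\beta_0||\cdot|}$, with $L^1$ norms $\tfrac{1}{4\beta_0^2}$ and $\tfrac{1}{2|\beta_0|}$, so $v$ and $v_x$ are controlled in $L^\infty$ by $n$. More importantly, differentiating \eqref{Npeakon2} and computing along $\psi$ one finds that the transport term cancels the $vv_x$ contribution in $\tfrac{d}{dt}v(t,\psi)$ and, crucially, the $v_{xx}$ terms cancel in $\tfrac{d}{dt}v_x(t,\psi)$ (rewriting the nonlocal derivatives via $\partial_xP_1(D)=-\mathrm{Id}+4\beta_0^2P_2(D)$), leaving two evolution equations whose right-hand sides are quadratic in $(v,v_x)$ only, with coefficients bounded by $C\max(\beta_0^2,\beta_0^{-2})$. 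Setting $A(t)=\|v(t)\|_{L^\infty}$, $B(t)=\|v_x(t)\|_{L^\infty}$, this yields differential inequalities $A'\le C\max(\beta_0^2,\beta_0^{-2})(A+B/|\beta_0|)^2$ and the analogue for $B$, hence a scalar Riccati inequality for $|\beta_0|A+B$. A continuity/bootstrap argument then gives $A(t)\le 2\|v_0\|_{L^\infty}$ and $B(t)\le 2\|v_{0x}\|_{L^\infty}$ for $t\in[0,\min(T^*,T_1)]$ with $T_1$ exactly as in the statement.

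With this in hand I would close the argument. On $[0,\min(T^*,T_1)]$ we get $|g(t)|\le 2\big(16\beta_0^2\|v_0\|_{L^\infty}+8|\beta_0|\|v_{0x}\|_{L^\infty}\big)$, hence $g(t)^2\le 8b$ by $(a_1+a_2)^2\le 2(a_1^2+a_2^2)$. Therefore the quadratic $2M^2-gM+b$ has discriminant $g^2-8b\le 0$ and is nonnegative, which converts $M'=-4M^2+gM$ into the clean inequality $M'\le -2M^2+b$. Since \eqref{blow initial} forces $n_0(x_0)<-\sqrt{b/2}$, the solution $y$ of $y'=-2y^2+b=-2\big(y-\sqrt{b/2}\big)\big(y+\sqrt{b/2}\big)$, $y(0)=n_0(x_0)$, stays below $-\sqrt{b/2}$, is strictly decreasing, and blows up to $-\infty$ at $T_2=\tfrac{1}{2\sqrt{2b}}\log\tfrac{\sqrt2\,n_0(x_0)-\sqrt b}{\sqrt2\,n_0(x_0)+\sqrt b}$ (explicit integration by partial fractions); a direct manipulation shows that \eqref{blow initial} is precisely equivalent to $T_2\le T_1$. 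By the scalar comparison principle $M(t)\le y(t)$ while both are defined, so $\|n(t)\|_{L^\infty}\ge|M(t)|\to\infty$ as $t\uparrow T_2$. If $T^*$ were $>T_2$, then $n\in C([0,T_2];B^s_{p,r})\hookrightarrow C([0,T_2];L^\infty)$ would keep $M$ bounded on $[0,T_2]$, a contradiction; hence $T^*\le T_2\le T_1$, and Theorem \ref{blow up criterion} (equivalently its Corollary) confirms that the blow-up occurs at $T_2$.

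The main obstacle is the a priori $W^{1,\infty}$ estimate: everything hinges on the cancellation of the $v_{xx}$-term in the evolution of $v_x$ along $\psi$ — without it the estimate loses a derivative and the bootstrap fails — and on carefully tracking the $\beta_0$-dependence so that the bootstrap threshold is exactly $T_1=\frac{1}{32\max(\beta_0^2,\beta_0^{-2})\|v_0\|_{W^{1,\infty}}}$ and the hypothesis \eqref{blow initial} lines up with $T_2\le T_1$. The ODE comparison and the explicit computation of $T_2$ are routine.
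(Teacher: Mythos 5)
Your proposal follows essentially the same route as the paper's proof: the $W^{1,\infty}$ bootstrap on $[0,T_1]$ via the equation for $v_x$ (with the $v_{xx}$-contribution absorbed into the transport term, i.e.\ the paper's estimate \eqref{W1,infty}), the Riccati inequality $\partial_t n(t,\psi(t,x_0))\le -2n^2+b$ along the characteristic through $x_0$ (you obtain it by a discriminant argument on $g=16\beta_0^2v+8\beta_0 v_x$, the paper by Young's inequality on each term --- equivalent), and explicit integration/comparison of $y'=-2y^2+b$ giving blow-up by $T_2\le T_1$. The only noticeable deviation is that you bootstrap $\|v\|_{L^\infty}$ and $\|v_x\|_{L^\infty}$ separately by twice their initial values, while the paper bootstraps the combined $W^{1,\infty}$-norm and then (just as your estimate of $g$ requires) uses the separate initial norms in the definition of $b$; this imprecision is shared with the paper and does not alter the structure or conclusion of the argument.
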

	\begin{proof}
		Since $v$ is the solution of  system \eqref{Npeakon2}, and $v_x$ satisfies
		\begin{equation}
			\left\{
			\begin{array}{l}
				\partial_t(v_x)-(4v_x+8\beta_0v)\partial_x(v_x)=8\beta_0(v_x)^2-16\beta_0^2vv_x+8\beta_0\partial_xP_1(D)(2\beta_0^2v^2+v_x^2)+8\beta_0^2P_1(D)(4\beta_0^2v^2-v_x^2)\\
				v_x|_{t=0}=v_{0x}\nonumber
			\end{array}
			\right.
		\end{equation}
		then we have estimates
		\begin{equation}\label{W1,infty}
			\|v\|_{W^{1,\infty}}\leq\|v_0\|_{W^{1,\infty}}+8\max(\beta_0^2,\beta_0^{-2})\int_0^t\|v(\tau)\|_{W^{1,\infty}}^2{\rm d}\tau
		\end{equation}
		
		Assume that $\|v(t)\|_{W^{1,\infty}}\leq 2\|v_0\|_{W^{1,\infty}}$ is satisfied for all $t\in[0,T_1]$, then by inequality \eqref{W1,infty}, we have
		$$\|v(t)\|_{W^{1,\infty}}\leq \|v_0\|_{W^{1,\infty}}+32\max(\beta_0^2,\beta_0^{-2})t\|v_0\|_{W^{1,\infty}}62\leq  2\|v_0\|_{W^{1,\infty}}$$
		Thus we have $\|v(t)\|_{W^{1,\infty}}\leq 2\|v_0\|_{W^{1,\infty}}$ for all $t\in[0,T_1]$.
		
		Consider the system \eqref{Lagrangian} for all $t\in[0,T_1]$, then it is obvious that for all $(t,x_0)\in[0,T_1]\times\mathbb{R}$
		$$|16\beta_0^2 v\cdot n|\leq 32\beta_0^2\|v_0\|_{L^\infty}|n|\leq (16\beta_0^2\|v_0\|_{L^\infty})^2+n^2$$
		and
		$$|8\beta_0v_x\cdot n|\leq 16|\beta_0|\|v_{0x}\|_{L^\infty}|n|\leq (8\beta_0\|v_{0x}\|_{L^\infty})^2+n^2$$
		denote $b=(16\beta_0^2\|v_0\|_{L^\infty})^2+(8\beta_0\|v_{0x}\|_{L^\infty})^2$, then according to \eqref{Lagrangian}, we have
		$$\partial_t n(t,\psi(t,x_0))\leq-2n^2+b$$
		
		Consider the ordinary differential equation
		$$f^\prime(t)=-2f^2+b,f(0)=n_0(x_0)$$
		then we have
		$$n(t,\psi(t,x_0))\leq\int_0^t f^\prime(\tau){\rm d}\tau+n_0(x_0)=\sqrt{\frac b 2}\cdot\frac{1+Ce^{-2\sqrt{2b}t}}{1-Ce^{-2\sqrt{2b}t}}\quad{\rm with}\quad C=\frac{\sqrt{2}n_0(x_0)-\sqrt{b}}{\sqrt{2}n_0(x_0)+\sqrt{b}}$$
		It is obvious that $1-Ce^{-2\sqrt{2b}t}$ is an increasing function, and by \eqref{blow initial}
		$$1-C<0\quad{\rm and}\quad1-Ce^{-2\sqrt{2b}T_1}>0\quad{\rm and}\quad1-Ce^{-2\sqrt{2b}T_2}=0$$
		then we can easily get that $n(T_2,\psi(T_2,x_0))=-\infty$ and $T_2<T_1$.
		
		Therefore $n$ will blow up at time $T_2=\frac{1}{2\sqrt{2b}}\log(\frac{\sqrt{2}n_0(x_0)-\sqrt{b}}{\sqrt{2}n_0(x_0)+\sqrt{b}})$.
	\end{proof}

\section{Existence of global solution}
	Similarly, we consider the initial value problem \eqref{flow map}, applying classical results in the theory of ODEs, one can obtain the following results of $\psi$.
	
	\begin{lemm}\cite{flow}
		Let $v_x+2\beta_0v\in C([0,T];H^s(\mathbb{R}))\cap C^1([0,T];H^{s-1}(\mathbb{R}))$, $s\geq 2$. Then the problem \eqref{flow map} has a unique solution $\psi\in C^1([0,T]\times\mathbb{R})$. Moreover, the map $\psi(t,\cdot)$ is an increasing diffeomorphism of $\mathbb{R}$ with $\psi_x(0,x)=1$ and
		$$\psi_x(t,x)=\exp(\int_0^t-4(v_x+2\beta_0v)_x)(\tau,\psi(\tau,x){\rm d}\tau)>0,\ \ \forall(t,x)\in[0,T)\times \mathbb{R}$$
	\end{lemm}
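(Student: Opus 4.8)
The plan is to read \eqref{flow map} as an ordinary differential equation driven by the time-dependent vector field $F(t,y)\triangleq-4(v_x+2\beta_0v)(t,y)$ and to invoke the classical Cauchy--Lipschitz theory together with the theorem on differentiable dependence on initial data. The first step is to record the regularity of $F$: since $s\geq 2>\tfrac32$, the one-dimensional Sobolev embeddings $H^s(\mathbb{R})\hookrightarrow C^1_b(\mathbb{R})$ and $H^{s-1}(\mathbb{R})\hookrightarrow C_b(\mathbb{R})$ show that $F\in C([0,T]\times\mathbb{R})$, that $F$ is continuously differentiable in the space variable, and that $F$ together with $\partial_yF=-4(v_x+2\beta_0v)_x$ are bounded on $[0,T]\times\mathbb{R}$, say by a multiple of $\|v_x+2\beta_0v\|_{C([0,T];H^s)}$. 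In particular $F$ is globally Lipschitz in $y$, uniformly in $t\in[0,T]$.

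With this in place, Picard--Lindelöf applied for each fixed $x\in\mathbb{R}$ produces a unique maximal solution $\psi(\cdot,x)$; the uniform Lipschitz bound on $F$ rules out finite-time blow-up (a one-line Gronwall estimate gives $|\psi(t,x)-x|\leq T\|F\|_{L^\infty_{t,y}}$), so the solution is defined on the whole interval $[0,T]$ and lies in $C^1([0,T])$ with $\psi_t=F(t,\psi)$. Joint regularity $\psi\in C^1([0,T]\times\mathbb{R})$ then follows from the smooth-dependence theorem applied with $F\in C^1$ in $y$: the map $x\mapsto\psi(t,x)$ is $C^1$, and continuity of $(t,x)\mapsto F(t,\psi(t,x))$ yields continuity of $\psi_t$.

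Next I would differentiate the integral identity $\psi(t,x)=x+\int_0^tF(\tau,\psi(\tau,x))\,d\tau$ in $x$; since $F$ is $C^1$ in its second slot, this is legitimate and produces the linear variational equation
$$\partial_t\psi_x(t,x)=-4(v_x+2\beta_0v)_x(t,\psi(t,x))\,\psi_x(t,x),\qquad \psi_x(0,x)=1,$$
whose unique solution is exactly the stated exponential formula. Being an exponential, $\psi_x(t,x)>0$ for all $(t,x)\in[0,T]\times\mathbb{R}$, hence $x\mapsto\psi(t,x)$ is strictly increasing for each fixed $t$.

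Finally, to pass from ``strictly increasing $C^1$ map'' to ``$C^1$-diffeomorphism of $\mathbb{R}$'' I would use reversibility of the flow: solving \eqref{flow map} backward in time from an arbitrary point, via the same Cauchy--Lipschitz argument, yields a two-sided inverse of $\psi(t,\cdot)$ that is itself $C^1$, so $\psi(t,\cdot)$ is onto $\mathbb{R}$ and a $C^1$-diffeomorphism; alternatively, the bound $|\psi(t,x)-x|\leq T\|F\|_{L^\infty}$ together with monotonicity already forces $\psi(t,x)\to\pm\infty$ as $x\to\pm\infty$, and $\psi_x>0$ gives the inverse function theorem locally, hence globally by monotonicity. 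The only step requiring genuine care is the first one --- checking via Sobolev embedding that $F$ and $\partial_yF$ are globally bounded and Lipschitz on the whole line, which is what allows the ODE theory to run on all of $[0,T]\times\mathbb{R}$; everything afterward is a direct application of standard ODE facts.
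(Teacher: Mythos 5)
Your argument is correct and is precisely the classical Cauchy--Lipschitz/flow-map argument that the paper itself does not spell out but delegates to the cited reference with the phrase ``applying classical results in the theory of ODEs'': Sobolev embedding ($s\geq 2>\tfrac32$) makes $-4(v_x+2\beta_0 v)$ bounded, continuous and uniformly Lipschitz in $x$, Picard--Lindel\"of plus differentiable dependence on initial data give $\psi\in C^1$ and the variational equation, and the exponential formula yields $\psi_x>0$ and the diffeomorphism property. So your proposal matches the intended proof; no gaps.
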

	
	We then have the following sign-preserved result.
	\begin{lemm}
		Let $n_0\in H^s(\mathbb{R}),\ s>\frac 1 2$ and $n_0(x_0)\geq0,\ \forall x\in\mathbb{R}$. Assume $T_{n_0}$ is the maximal existence time of the solution $n(t,x)$ to \eqref{Npeakon}, then
		$$n(t,x)\geq0,\ \forall(t,x)\in[0,T_{n_0})\times\mathbb{R}$$ 
	\end{lemm}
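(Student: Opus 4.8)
The plan is to pass to Lagrangian coordinates and reduce the evolution of $n$ along each characteristic to a scalar \emph{linear} ODE. First I would apply Theorem \ref{well posedness} with $p=r=2$: since $n_0\in H^s=B^s_{2,2}$ and $s>\tfrac12=\tfrac1p$, the second branch of the index condition \eqref{index} holds, so \eqref{Npeakon} admits a maximal solution $n\in E^s_{2,2}(T_{n_0}^-)$, and $v=(4\beta_0^2-\partial_x^2)^{-1}n$ satisfies $v\in C([0,T];H^{s+2})$, $v_x\in C([0,T];H^{s+1})$ for every $T<T_{n_0}$. As $s>\tfrac12$, the embeddings $H^{s}\hookrightarrow L^\infty$ and $H^{s+1}\hookrightarrow C^1_b$ show that $n$, $v$ and $v_x$ are continuous and bounded on $[0,T]\times\mathbb{R}$ and that $v_x+2\beta_0 v\in C([0,T];C^1_b)$; hence \eqref{flow map} has a unique solution $\psi\in C^1([0,T]\times\mathbb{R})$ with $\psi(t,\cdot)$ an increasing diffeomorphism of $\mathbb{R}$ for each $t$ (the preceding lemma, or classical ODE theory for the $C^1_b$ vector field).

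Next I would fix $x\in\mathbb{R}$ and $T<T_{n_0}$ and set $N(t):=n(t,\psi(t,x))$. Factoring $n$ out of the right-hand side of the Lagrangian equation \eqref{Lagrangian}, one sees that $N$ solves
$$N'(t)=g(t)\,N(t),\qquad N(0)=n_0(x),\qquad g(t):=\bigl(16\beta_0^2 v-4n+8\beta_0 v_x\bigr)(t,\psi(t,x)).$$
Since $\sup_{t\in[0,T]}\bigl(\|n(t)\|_{L^\infty}+\|v(t)\|_{L^\infty}+\|v_x(t)\|_{L^\infty}\bigr)<\infty$ by the above embeddings together with $n\in C([0,T];H^s)$, the coefficient $g$ is continuous, hence integrable, on $[0,T]$. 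Solving this linear ODE gives
$$N(t)=n_0(x)\exp\!\Bigl(\int_0^t g(\tau)\,{\rm d}\tau\Bigr)\ge 0,$$
because $n_0(x)\ge 0$ and the exponential factor is strictly positive. As $\psi(t,\cdot)$ is a bijection of $\mathbb{R}$, every $y\in\mathbb{R}$ has the form $\psi(t,x)$ for some $x$, so $n(t,y)\ge 0$ for all $y\in\mathbb{R}$ and all $t\in[0,T]$; letting $T\uparrow T_{n_0}$ gives the conclusion.

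I expect the only real obstacle to be a regularity technicality: writing \eqref{Lagrangian} and differentiating $t\mapsto n(t,\psi(t,x))$ requires evaluating $n$ and $\partial_x n$ pointwise along $\psi$, which is immediate when $s>\tfrac32$ (then $n\in C^1_b$) but not when $\tfrac12<s\le\tfrac32$. In that range I would argue by approximation: the mollifications $n_0^\varepsilon:=\rho_\varepsilon* n_0\in H^\infty$ remain nonnegative and converge to $n_0$ in $H^s$, the argument above applies verbatim to the smooth solutions $n^\varepsilon$ and yields $n^\varepsilon\ge 0$, and the continuous dependence on the initial data proved in Theorem \ref{well posedness} — together with the lower bound $T^*\ge C\|n_0\|_{B^s_{p,r}}^{-1}$ and the usual continuation argument, which keeps the existence times from collapsing on a fixed $[0,T]$ with $T<T_{n_0}$ — gives $n^\varepsilon\to n$ in $C([0,T];H^s)\hookrightarrow C([0,T]\times\mathbb{R})$, so the inequality $n\ge 0$ passes to the limit. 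If the Lagrangian identity \eqref{Lagrangian} is taken as already valid at the stated regularity, this last step can be skipped.
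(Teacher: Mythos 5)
Your proposal is correct and follows essentially the same route as the paper: both work along the flow \eqref{flow map} and reduce the claim to a scalar ODE in Lagrangian coordinates, the paper via the conserved quantity $n(t,\psi(t,x))\,\psi_x(t,x)=n_0(x)$ with $\psi_x>0$, you via the equivalent linear ODE $N'=gN$ (these coincide, since $g=-\partial_t\log\psi_x$ along the flow). Your closing mollification/continuous-dependence remark merely makes explicit a low-regularity point that the paper's computation leaves implicit, so no substantive difference.
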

	\begin{proof}
		From \eqref{Npeakon} we see that $\forall x\in\mathbb{R}$
		$$\begin{aligned}
			\frac {\rm d}{{\rm d}t}(n(t,\psi(t,x))\cdot\psi_x(t,x))&=n_t(t,\psi(t,x))\cdot\psi_x+n_x(t,\psi(t,x))\cdot\psi_t\psi_x+n(t,\psi(t,x))\cdot\psi_{tx}\\
			&=\psi_x\cdot(n_t-4(n(v_x+2\beta_0v))_x)=0
		\end{aligned}$$
		solving the ODE, we have
		$$n(t,\psi(t,x))\cdot\psi_x(t,x)=n_0(x)\cdot\psi_x(0,x)$$
		
		Noticing $n_0\geq0$ and$\psi_x>0$, we get $n(t,\psi(t,x))\geq0$. Since $\psi(t,x)$ is a diffeomorphism on $\mathbb{R}$, we completes the proof.
	\end{proof}
	
	\begin{lemm}\label{sign}
		Let $n_0\in H^s(\mathbb{R}),\ s>\frac 1 2$ and $n_0(x)\geq 0,\ \forall x\in\mathbb{R}$. Assume $T_{n_0}>0$ is the maximal existence time of the solution $n(t,x)$ to \eqref{Npeakon} with initial data $n_0$. Then
		$$v(t,x)\geq 0\quad{\rm and}\quad|v_x(t,x)|\leq 2|\beta_0|v(t,x)\leq\sqrt{2}|\beta_0|\|v\|_{H^1},\ \forall (t,x)\in [0,T_{n_0})\times\mathbb{R}$$
		
	\end{lemm}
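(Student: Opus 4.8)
The plan is to exploit the explicit convolution representation of $v$ in terms of $n$, combined with the sign-preservation of $n$ from the previous lemma. Since $n_0\in H^s$ with $s>\frac12$, the preceding lemma gives $n(t,x)\ge 0$ for all $(t,x)\in[0,T_{n_0})\times\mathbb{R}$. Moreover, as recorded in the proof of Theorem~\ref{well posedness}, $(4\beta_0^2-\partial_x^2)^{-1}$ acts as convolution with the kernel $G(x)=\frac{1}{4|\beta_0|}e^{-2|\beta_0||x|}$, so that
$$v(t,x)=\frac{1}{4|\beta_0|}\int_{\mathbb{R}}e^{-2|\beta_0||x-y|}\,n(t,y)\,{\rm d}y .$$
As $G>0$ and $n\ge 0$, this immediately yields $v(t,x)\ge 0$.

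Next I would differentiate this representation in $x$. Using $\partial_x e^{-2|\beta_0||x-y|}=-2|\beta_0|\operatorname{sgn}(x-y)\,e^{-2|\beta_0||x-y|}$, one obtains
$$v_x(t,x)=-\frac12\int_{\mathbb{R}}\operatorname{sgn}(x-y)\,e^{-2|\beta_0||x-y|}\,n(t,y)\,{\rm d}y=-\frac12\Bigl(\int_{-\infty}^{x}-\int_{x}^{\infty}\Bigr)e^{-2|\beta_0||x-y|}\,n(t,y)\,{\rm d}y .$$
Bounding each of the two integrals by its absolute value and using $n\ge 0$, the triangle inequality gives
$$|v_x(t,x)|\le\frac12\int_{\mathbb{R}}e^{-2|\beta_0||x-y|}\,n(t,y)\,{\rm d}y=2|\beta_0|\,v(t,x),$$
which is the first claimed estimate.

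For the remaining bound I would invoke the standard one-dimensional interpolation inequality $\|f\|_{L^\infty}^2\le\|f\|_{L^2}\|f_x\|_{L^2}\le\frac12\|f\|_{H^1}^2$ for $f\in H^1(\mathbb{R})$, proved by writing $f(x)^2=\int_{-\infty}^{x}2ff'=-\int_{x}^{\infty}2ff'$, averaging, and applying Cauchy--Schwarz and Young. Since $n(t,\cdot)\in H^s$ with $s>\frac12$, we have $v(t,\cdot)\in H^{s+2}\hookrightarrow H^1$, so this applies to $v$ and gives $v(t,x)\le\|v(t,\cdot)\|_{L^\infty}\le\frac{1}{\sqrt2}\|v(t,\cdot)\|_{H^1}$; multiplying by $2|\beta_0|$ yields $2|\beta_0|\,v(t,x)\le\sqrt2\,|\beta_0|\,\|v\|_{H^1}$, completing the chain.

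There is essentially no deep analytic obstacle here: the only points needing care are (i) justifying differentiation under the integral sign, which is legitimate since $n(t,\cdot)\in H^s\subset L^2\cap L^\infty$ (because $s>\frac12$) dominates the integrand and its $x$-derivative uniformly on compact sets of $x$, and (ii) ensuring the pointwise statements $n\ge0$, $v\ge0$ make sense, which follows from the Sobolev embedding $H^s\hookrightarrow C_b(\mathbb{R})$ and the regularity gain $v\in H^{s+2}$. The genuinely substantive ingredient is the sign-preservation lemma for $n$ proved just above; the rest is the kernel computation outlined here.
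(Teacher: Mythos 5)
Your proposal is correct and follows essentially the same route as the paper: positivity of $v$ from the convolution kernel $\frac{1}{4|\beta_0|}e^{-2|\beta_0||x|}$ together with the sign-preservation of $n$, the bound $|v_x|\le 2|\beta_0|v$ from differentiating the kernel, and the final estimate $v(t,x)\le\frac{1}{\sqrt2}\|v\|_{H^1}$ via the standard one-dimensional argument $2v^2=\int_{-\infty}^{x}(v^2)_y\,{\rm d}y-\int_{x}^{\infty}(v^2)_y\,{\rm d}y\le\|v\|_{H^1}^2$, which is exactly the computation in the paper. Your added remarks on differentiating under the integral and on the Sobolev embedding only make explicit what the paper leaves implicit.
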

	\begin{proof}
		Since we have
		$$v(t,x)=\frac{1}{4|\beta_0|}e^{-|2\beta_0\cdot|}\ast n(t,x)\geq0$$
		and
		$$|v_x(t,x)|=|\frac 1 2(-sgn(\cdot)e^{-|2\beta_0\cdot|}\ast n)(t,x)|\leq2|\beta_0|v(t,x)$$
		
		On the other hand, for almost $\forall x\in\mathbb{R}$, we have
		$$
		\begin{aligned}
			2v^2(t,x)&=\int_{-\infty}^x\frac{{\rm d}}{{\rm d}y}(v^2(t,y)){\rm d}y-\int^{\infty}_x\frac{{\rm d}}{{\rm d}y}(v^2(t,y)){\rm d}y=\int_{-\infty}^x2v\cdot v_y(t,y){\rm d}y-\int^{\infty}_x2v\cdot v_y(t,y){\rm d}y\\
			&\leq \int_{-\infty}^x+\int^{\infty}_xv^2+v_y^2{\rm d}y\leq\int_{-\infty}^\infty v^2+v_y^2{\rm d}y=\|v(t,\cdot)\|_{H^1}^2\nonumber
		\end{aligned}
		$$
	\end{proof}
	
	\begin{lemm}
		Let $n_0\in H^s(\mathbb{R}),\ s>\frac 1 2$ and $n_0(x)\geq 0,\ \forall x\in\mathbb{R}$. Define an equivalent norm $H_{\beta_0}^1$ of $H^1$-norm:
		$$\|v\|_{H_{\beta_0}^1}\triangleq(\int_{\mathbb{R}}(4\beta_0^2+|\xi|^2)|\hat{v}(\xi)|^2{\rm d}\xi)^{1/2}$$
		Then the $H^1_{\beta_0}$-norm of $v$ in $[0,T_{n_0})$ is non-increasing, namely, if $0\leq t_1\leq t_2<T_{n_0}$
		$$\|v(t_2)\|_{H^1_{\beta_0}}\leq\|v(t_1)\|_{H^1_{\beta_0}}\leq\|v_0\|_{H^1_{\beta_0}}$$
	\end{lemm}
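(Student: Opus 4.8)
The goal is to show that $\|v(t)\|_{H^1_{\beta_0}}^2 = \int_{\mathbb{R}}(4\beta_0^2+|\xi|^2)|\hat v(\xi)|^2\,d\xi = 4\beta_0^2\|v\|_{L^2}^2 + \|v_x\|_{L^2}^2$ is non-increasing along the flow. The natural approach is to differentiate this quantity in time and use the equation \eqref{Npeakon2}, then exploit the sign information from Lemma \ref{sign}. First I would note that, by Plancherel, $\|v\|_{H^1_{\beta_0}}^2 = \langle n, v\rangle_{L^2}$, since $n = 4\beta_0^2 v - v_{xx}$ and integration by parts gives $\langle (4\beta_0^2 - \partial_x^2)v, v\rangle = 4\beta_0^2\|v\|_{L^2}^2 + \|v_x\|_{L^2}^2$. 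Thus it suffices to track $\frac{d}{dt}\langle n, v\rangle$.

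**Key steps.** Differentiating, $\frac{d}{dt}\langle n,v\rangle = \langle n_t, v\rangle + \langle n, v_t\rangle = 2\langle n_t, v\rangle$ after symmetrizing (using that $(4\beta_0^2-\partial_x^2)^{-1}$ is self-adjoint, so $\langle n, v_t\rangle = \langle n, (4\beta_0^2-\partial_x^2)^{-1} n_t\rangle = \langle (4\beta_0^2-\partial_x^2)^{-1} n, n_t\rangle = \langle v, n_t\rangle$). Then I would substitute $n_t = 4[n(v_x + 2\beta_0 v)]_x$ from \eqref{Npeakon} and integrate by parts:
\begin{equation*}
\tfrac{d}{dt}\langle n,v\rangle = 2\langle 4[n(v_x+2\beta_0 v)]_x, v\rangle = -8\langle n(v_x + 2\beta_0 v), v_x\rangle = -8\int_{\mathbb{R}} n\,v_x^2\,dx - 16\beta_0\int_{\mathbb{R}} n\, v\, v_x\,dx.
\end{equation*}
For the second term, write $v v_x = \tfrac12 (v^2)_x$ and integrate by parts once more to get $-16\beta_0 \cdot \tfrac12\int n (v^2)_x = 8\beta_0 \int n_x v^2\,dx$; alternatively, and more cleanly, I would use $n = 4\beta_0^2 v - v_{xx}$ directly in $\int n v v_x\,dx$. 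Substituting $n = 4\beta_0^2 v - v_{xx}$: the term $\int 4\beta_0^2 v^2 v_x\,dx = \tfrac{4\beta_0^2}{3}\int (v^3)_x\,dx = 0$, while $-\int v_{xx} v v_x\,dx = -\tfrac12\int v\,(v_x^2)_x\,dx = \tfrac12\int v_x\cdot v_x^2\,dx = \tfrac12\int v_x^3\,dx$. So the cross term contributes $-16\beta_0 \cdot \tfrac12\int v_x^3\,dx = -8\beta_0\int v_x^3\,dx$, and altogether
\begin{equation*}
\tfrac{d}{dt}\|v(t)\|_{H^1_{\beta_0}}^2 = -8\int_{\mathbb{R}} n\,v_x^2\,dx - 8\beta_0\int_{\mathbb{R}} v_x^3\,dx.
\end{equation*}
Since $n_0 \geq 0$, the previous lemma gives $n(t,\cdot)\geq 0$, so the first integral is $\geq 0$; and by Lemma \ref{sign}, $|v_x| \leq 2|\beta_0| v$, hence $|v_x|^3 \leq 2|\beta_0| v\, v_x^2$, which combined with $v\geq 0$ and $n = 4\beta_0^2 v - v_{xx}$... here I need to be careful, but the cleaner route is: $|8\beta_0 v_x^3| \leq 16\beta_0^2 |v| v_x^2 = 16\beta_0^2 v\, v_x^2 \leq 4\cdot(4\beta_0^2 v)\,v_x^2$, and since this must be dominated by $8 n v_x^2$, I would instead observe directly that $-8nv_x^2 - 8\beta_0 v_x^3 \leq -8 v_x^2(n + \beta_0 v_x) \leq -8v_x^2(n - 2\beta_0^2 v) = -8v_x^2(2\beta_0^2 v - v_{xx})$, so the precise form of the pointwise bound needs the relation $n + \beta_0 v_x \geq 0$; this follows because $n = 4\beta_0^2 v - v_{xx}$ and $|v_x|\le 2|\beta_0| v$ give $n \geq 4\beta_0^2 v - v_{xx}$ while a direct convolution computation shows $n + \beta_0 v_x = 4\beta_0^2 v - v_{xx} + \beta_0 v_x \geq 0$ using $v\ge 0$ and the explicit kernel. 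Thus the right-hand side is $\leq 0$, giving monotonicity, and integrating in time over $[t_1,t_2]$ yields the chain of inequalities, with the last one $\|v(t_1)\|_{H^1_{\beta_0}}\le\|v_0\|_{H^1_{\beta_0}}$ following from $t_1 \geq 0$.

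**Main obstacle.** The delicate point is justifying the pointwise sign of the combination $n v_x^2 + \beta_0 v_x^3$ (equivalently, that $-8\int n v_x^2 - 8\beta_0\int v_x^3 \le 0$). The bound $|v_x|\le 2|\beta_0|v$ from Lemma \ref{sign} alone gives $|8\beta_0 v_x^3| \le 16\beta_0^2 v\, v_x^2$, which is not obviously $\le 8 n v_x^2$ unless one also knows $n \ge 2\beta_0^2 v$ pointwise — and that may fail. The honest resolution is to combine the two integral terms before estimating: one integrates by parts so that $-8\int n v_x^2 - 8\beta_0\int v_x^3$ becomes a single integral against a manifestly nonpositive integrand, or one uses the explicit kernel representation $v = \frac{1}{4|\beta_0|}e^{-2|\beta_0\cdot|}\ast n$ with $n\ge 0$ to derive the refined pointwise inequality $\beta_0 v_x(x) \ge -n(x)$ directly (this is the peakon analogue of the standard Camassa–Holm estimate $v - |v_x| \ge$ something). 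I would verify this kernel inequality as a short lemma and then the monotonicity follows. Regularity-wise, since $s > 1/2$ the manipulations are justified first for smooth data and then passed to the limit, or one works with the a priori estimates already established; I would note this briefly rather than belabor it.
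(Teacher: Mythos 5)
Your computation of the time derivative is correct and, in fact, reproduces the paper's: writing $\|v\|_{H^1_{\beta_0}}^2=\langle n,v\rangle$, symmetrizing, and integrating by parts gives $\tfrac{d}{dt}\|v\|_{H^1_{\beta_0}}^2=-8\int_{\mathbb R} n\,v_x^2\,dx-8\beta_0\int_{\mathbb R} v_x^3\,dx$, which agrees with the paper's $-8\int_{\mathbb R} v_x^2(4\beta_0^2v+\beta_0v_x)\,dx$ because $\int_{\mathbb R} v_{xx}v_x^2\,dx=0$. The gap is in the final sign step. The concrete justification you propose — the pointwise inequality $n+\beta_0 v_x\ge 0$, "derived from the explicit kernel and $v\ge0$" — is false: it cannot follow from $n\ge0$ alone, since for a nonnegative compactly supported $n$ (already at $t=0$, say $\beta_0>0$) one has, just to the right of the support, $n=0$ while $v>0$ and $v_x=-2\beta_0 v$, so $n+\beta_0 v_x=-2\beta_0^2v<0$. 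Your earlier instinct that this would essentially require $n\ge 2\beta_0^2 v$ pointwise, "which may fail", was the correct one; the subsequent "short lemma" you defer to does not exist. Your other suggestion — combine the two integrals so the integrand is manifestly signed — is the right idea but is never executed, so the proof of $\tfrac{d}{dt}\|v\|_{H^1_{\beta_0}}^2\le0$ is not closed as written.

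The missing step is one line and is exactly what the paper does: since $\int_{\mathbb R} v_{xx}v_x^2\,dx=\tfrac13\int_{\mathbb R}(v_x^3)_x\,dx=0$, replace $\int n v_x^2$ by $4\beta_0^2\int v\,v_x^2$, so that
$$\frac{d}{dt}\|v\|_{H^1_{\beta_0}}^2=-8\int_{\mathbb R} v_x^2\bigl(4\beta_0^2 v+\beta_0 v_x\bigr)\,dx,$$
and now Lemma \ref{sign} ($v\ge0$ and $|v_x|\le 2|\beta_0|v$) gives the pointwise bound $4\beta_0^2v+\beta_0v_x\ge 4\beta_0^2v-2\beta_0^2v=2\beta_0^2v\ge0$; equivalently, via the kernel, $4\beta_0^2v+\beta_0v_x=\bigl(|\beta_0|-\tfrac{\beta_0}{2}\,\mathrm{sgn}(\cdot)\bigr)e^{-2|\beta_0||\cdot|}\ast n\ge0$ because $n\ge0$, which is the expression appearing in the paper's proof. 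Note that the correct nonnegative combination is $4\beta_0^2v+\beta_0v_x$, not $n+\beta_0v_x$; the two only agree after multiplication by $v_x^2$ and integration. With this correction the remainder of your argument (integration in time over $[t_1,t_2]$ and the brief regularity remark) is fine, and your proof coincides with the paper's.
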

	\begin{proof}
		Since $n_0\geq0$, according to Lemma\ref{sign}, it follows that $v\geq 0$ and $|v_x(t,x)|\leq2|\beta_0|v(t,x)$. Hence for any $t\in[0,T_{n_0})$, we have
		$$\begin{aligned}
			\frac{{\rm d}}{{\rm d}t}\int_{\mathbb{R}}4\beta_0^2v^2+v_x^2{\rm d}x&=2\int_{\mathbb{R}}4\beta_0^2v\cdot v_t+v_x\cdot v_{xt}{\rm d}x=2\int_{\mathbb{R}}v(4\beta_0^2v_t-v_{xxt}){\rm d}x\\
			&=8\int_{\mathbb{R}}v[n(v_x+2\beta_0v)]_x{\rm d}x=-8\int_{\mathbb{R}}v_x(4\beta_0^2v-v_{xx})(v_x+2\beta_0v){\rm d}x\\
			&=-8\int_{\mathbb{R}}4\beta_0^2v\cdot v_x^2+8\beta_0^3v^2\cdot v_x-v_x^2v_{xx}-2\beta_0v\cdot v_x\cdot v_{xx}{\rm d}x\\
			&=-8\int_{\mathbb{R}}v_x^2(4\beta_0^2v+\beta_0v_x){\rm d}x=-8\int_{\mathbb{R}}v_x^2(|\beta_0|+\frac{\beta_0}2sgn(\cdot))e^{-|2\beta_0\cdot|\ast n{\rm d}x}\leq 0
		\end{aligned}$$
		Integrating the above inequality on $[0,t_1]$ and $[t_1,t_2]$ we have
		$$\|v(t_2)\|_{H^1_{\beta_0}}\leq\|v(t_1)\|_{H^1_{\beta_0}}\leq\|v_0\|_{H^1_{\beta_0}}$$
	\end{proof}
	
	\begin{theo}
		Let $n_0\in B^s_{2,2}$ with $s>1/2$ and $n_0\geq0$, the system of \eqref{Npeakon} has a unique global solution, i.e. $\forall T>0,\ n\in E^s_{2,2}(T)$.
	\end{theo}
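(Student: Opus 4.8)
The plan is to extend the local solution supplied by Theorem~\ref{well posedness} to a global one by ruling out finite-time blow-up via the blow-up criterion corollary following Theorem~\ref{blow up criterion}; concretely, it suffices to produce an a priori bound $\|n(t)\|_{L^\infty}\le C(t)$ with $C$ finite on every bounded time interval. Since $n_0\in B^s_{2,2}=H^s$ with $s>\tfrac12=\tfrac dp$, Lemma~\ref{Besov}(4) gives $B^s_{2,2}\hookrightarrow L^\infty$, so $\|n_0\|_{L^\infty}<\infty$ and the corresponding maximal solution $n\in E^s_{2,2}(T^*_-)$ exists and is unique.

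First I would invoke the sign-preserved lemma: from $n_0\ge0$ we get $n(t,x)\ge0$ on $[0,T^*)\times\mathbb{R}$, and then Lemma~\ref{sign} yields $v\ge0$ and the pointwise control $|v_x(t,x)|\le2|\beta_0|v(t,x)$ together with $v(t,x)\le\tfrac1{\sqrt2}\|v(t,\cdot)\|_{H^1}$. Combining this with the non-increase of the $H^1_{\beta_0}$-norm (the lemma preceding this theorem) and the equivalence of $\|\cdot\|_{H^1_{\beta_0}}$ with $\|\cdot\|_{H^1}$, one obtains a uniform bound $\|v(t)\|_{L^\infty}\le C\|v_0\|_{H^1}$ for all $t<T^*$.

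Next I would pass to Lagrangian coordinates \eqref{flow map}--\eqref{Lagrangian}. Along a characteristic $\partial_t n(t,\psi(t,x))=16\beta_0^2vn-4n^2+8\beta_0v_xn$. Using $n\ge0$ and $v\ge0$ we have $-4n^2\le0$ and $16\beta_0^2vn\le16\beta_0^2\|v\|_{L^\infty}n$; and from $|v_x|\le2|\beta_0|v$, $n\ge0$ we get $8\beta_0v_xn\le|8\beta_0v_xn|\le16\beta_0^2vn\le16\beta_0^2\|v\|_{L^\infty}n$. Hence $\partial_t n(t,\psi(t,x))\le K\,n(t,\psi(t,x))$ with $K=32\beta_0^2\sup_{t<T^*}\|v(t)\|_{L^\infty}\le C\beta_0^2\|v_0\|_{H^1}$. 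Grönwall's inequality gives $n(t,\psi(t,x))\le n_0(x)e^{Kt}$, and since $\psi(t,\cdot)$ is a diffeomorphism of $\mathbb{R}$ this becomes $\|n(t)\|_{L^\infty}\le\|n_0\|_{L^\infty}e^{Kt}$ for every $t<T^*$. If $T^*<\infty$ this bound contradicts the blow-up criterion corollary, so $T^*=\infty$ and $n\in E^s_{2,2}(T)$ for all $T>0$; uniqueness is inherited from Theorem~\ref{well posedness}.

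The step requiring the most care is the use of the Lagrangian flow at the low regularity $s>\tfrac12$: the flow-map lemma is stated for $v_x+2\beta_0v\in C([0,T];H^s)$ with $s\ge2$, whereas here $v_x+2\beta_0v\in H^{s+1}$ with $s+1$ possibly below $2$. I would dispose of this either by noting that $s+1>\tfrac32$ already makes $v_x+2\beta_0v$ Lipschitz in $x$, so the classical ODE theory behind the flow-map and sign-preserved lemmas still applies, or, more safely, by first establishing the $L^\infty$ bound for smooth data $n_0\in H^\infty$ (where every statement above is unambiguous) and then passing to the limit using the continuous dependence of Theorem~\ref{well posedness} and the Fatou property of Lemma~\ref{Besov}(7); the bound depends on $n_0$ only through $\|n_0\|_{L^\infty}$ and $\|v_0\|_{H^1}$, both of which are controlled along the approximating sequence.
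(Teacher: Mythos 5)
Your proposal is correct and follows essentially the same route as the paper: sign preservation of $n$, Lemma~\ref{sign} combined with the non-increase of the $H^1_{\beta_0}$-norm to get a uniform bound on $\|v\|_{L^\infty}$, a Gronwall estimate along the Lagrangian flow \eqref{flow map}--\eqref{Lagrangian} to control $\|n(t)\|_{L^\infty}$, and the blow-up criterion to conclude $T^*=\infty$. Your additional care about invoking the flow-map lemma below $s=2$ (Lipschitz velocity for $s+1>\tfrac32$, or smoothing the data and passing to the limit) addresses a point the paper glosses over, and your explicit factor $e^{Kt}$ corrects a small slip in the paper's Gronwall step.
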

	\begin{proof}
		Let $\psi(t,x)$ satisfies the flow equation \eqref{flow map}. Then by Lemma\ref{sign} and Sobolev embedding
		$$\frac{{\rm d}}{{\rm d}t}(n(t,\psi(t,x)))\leq(16\beta_0^2vn-4n^2+8\beta_0v_xn)(t,\psi(t,x))\leq 32\beta_0^2v\cdot n(t,\psi(t,x))\leq C\|v_0\|_{H^1}\cdot n(t,\psi(t,x))$$
		by Gronwall's inequality we have
		$$n(t,\psi(t,x))\leq n_0(x)\exp(C\|v_0\|_{H^1})$$
		thus $\forall T>0$, we have
		$$\int_0^T\|n(t,\cdot)\|_{L^\infty}{\rm d}t\leq \int_0^T\|n_0\|_{L^\infty}\exp(C\|v_0\|_{H^1}){\rm d}t$$
		
		Therefore by well-posedness and blow-up criterion of the system \eqref{Npeakon}, we prove that 
		$$n\in E^s_{2,2}(T),\ \ \forall T>0$$
	\end{proof}

	\section{Simulation}
	
	According to our blow up condition, we can construct an exact example of the system \eqref{Npeakon}.
	
	Let $\beta_0=1$ and denote
	\begin{equation}
		f(x)=
		\left\{
		\begin{array}{l}
			\exp(-\frac 1 {1-x^2})\quad |x|\leq1\\
			0\quad\quad\quad\quad\quad\ \ \ \   |x|>1\nonumber
		\end{array}
		\right.
	\end{equation}
	taking $n_0(x)=-20e\cdot f(20x)$, it is obviously that $\|n_0\|_{L^\infty}=20$. Since $\|n_0\|_{L^1}\leq \frac e 2$, by Young's inequality we can get the rough estimate of $v_0$ and $v_{0x}$
	$$\|v_0\|_{L^\infty}\leq \frac e 8,\ \|v_{0x}\|_{L^\infty}\leq \frac e 4$$
	then we can denote $T_1=\frac 1 {12e}$ and $b=8e^2$.
	
	It is obvious that
	$$-\sqrt{\frac b 2}+\frac{\sqrt{2b}}{1-\exp(2\sqrt{2b}T_1)}\geq -2e-\frac{4e}{1-\exp(3/4)}\geq -16\geq n_0(0)=-20$$
	then we have $n_0$ will blow up within $T_1\approx 0.0307$.

	The solution of system \eqref{Npeakon} with the initial data $n_0$ can be roughly expressed as Figure 1:
	\begin{figure}[H]\label{fig1}
		\centering 
		\includegraphics[height=6cm,width=9.5cm]{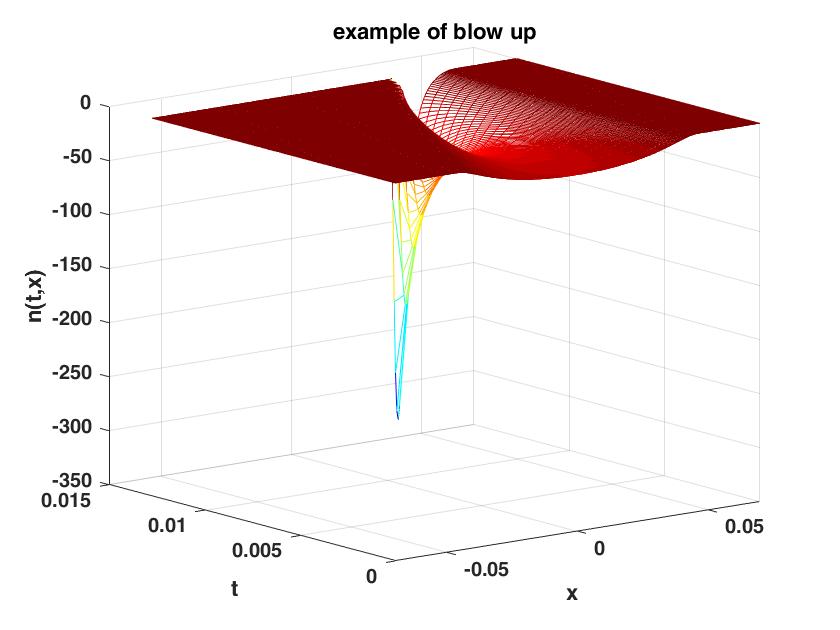}
		\caption{blow up}
	\end{figure}
	It is consistent with the approximate numerical calculation of blow up condition we have given.
	
	And for $n_0(x)=f(x)$, it is easy to check that $n_0\in B^s_{2,2}$ for all $s>1/2$ and $n_0(x)\geq0$, the solution of system \eqref{Npeakon} with the initial data $n_0$ can be roughly expressed as Figure 2:
	\begin{figure}[H]\label{fig2}
		\centering 
		\includegraphics[height=7cm,width=15cm]{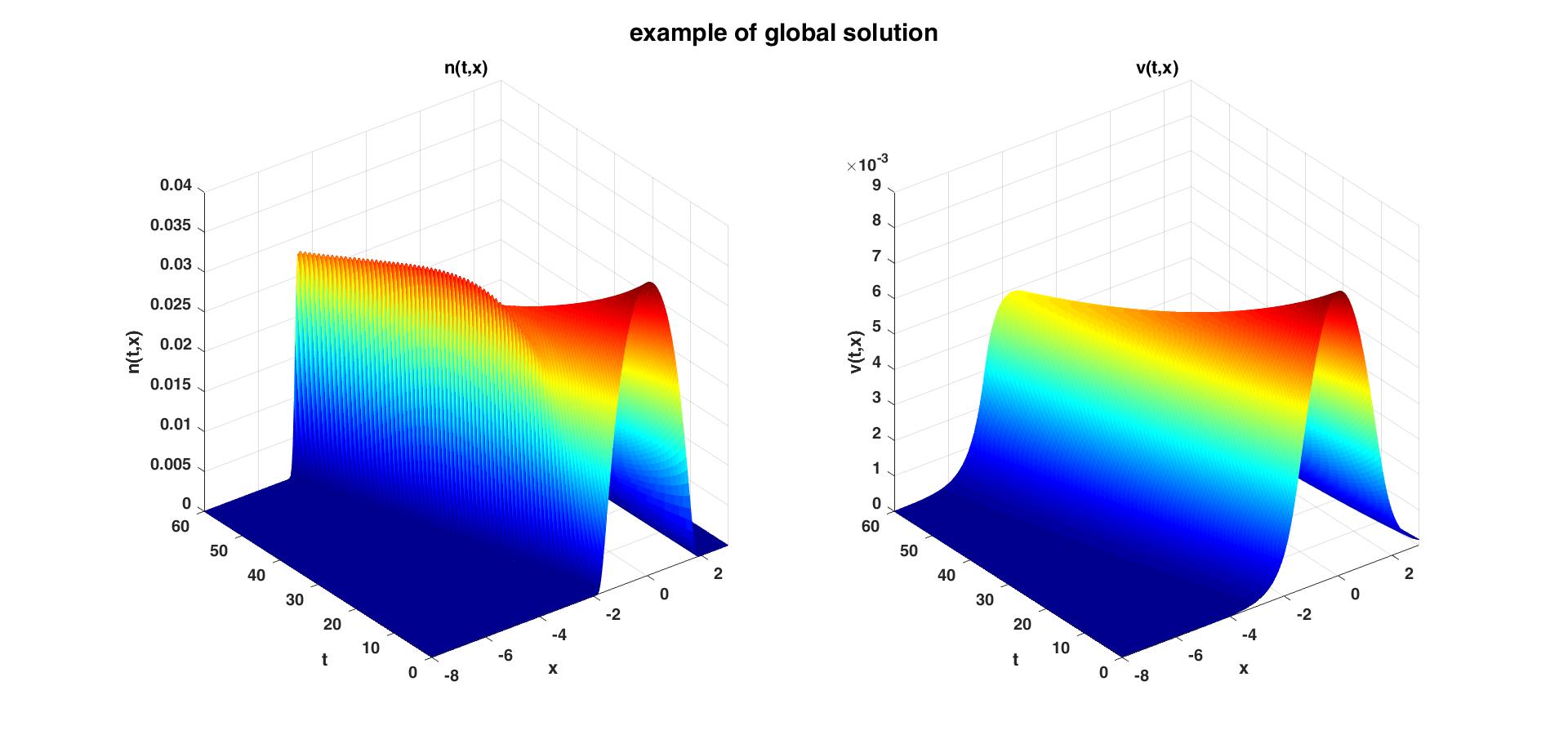}
		\caption{global solution}
	\end{figure}
	It is consistent with the existence of the global solution. And the above figure of the solution $v(t,x)$ is similar to the exact single peakon of the form\cite{Npeakon}
	$$v(t,x)=a_1e^{-2\beta_0|x+8\beta_0a_1t-a_2|}$$
	\begin{figure}[H]\label{fig3}
		\centering 
		\includegraphics[height=6cm,width=9.5cm]{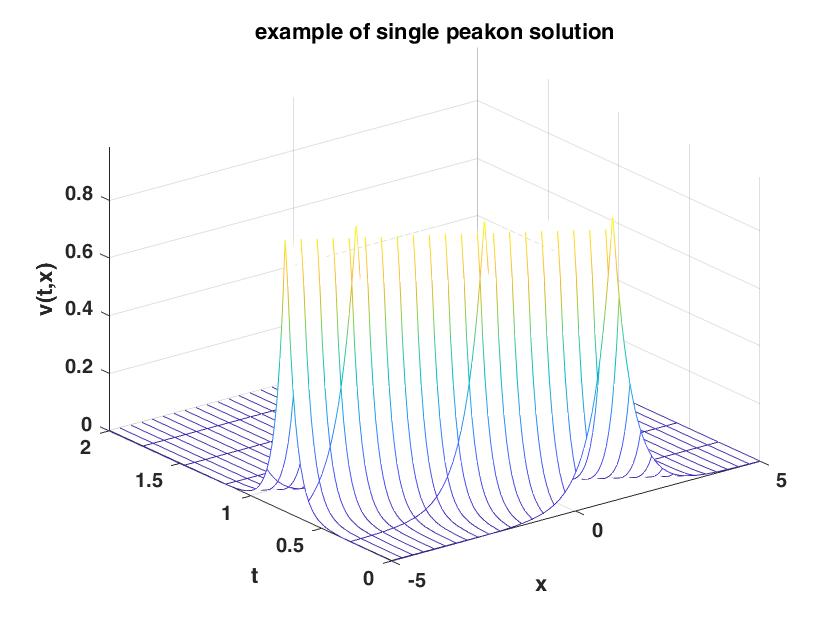}
		\caption{single peakon solution with $a_1=a_2=1,\beta_0=1$}
	\end{figure}
	
	\phantomsection
	\addcontentsline{toc}{section}{\refname}
	\bibliographystyle{abbrv} 
	\bibliography{Feneref}
\end{document}